\documentclass[11pt]{article}
\usepackage{graphicx}
\usepackage{amsmath} 
\usepackage{amssymb} 
\usepackage{amsthm}
\usepackage{mathrsfs}
\usepackage{tikz}
\usepackage{tikz-cd}
\usepackage[polutonikogreek,greek,english]{babel} 
\usepackage[top=2cm,bottom=2cm,left=3cm,right=3cm]{geometry}
\usepackage{stmaryrd}
\usepackage{hyperref}
\usepackage{dsfont}
\usepackage{bm}
\usepackage[T1]{fontenc}
\usetikzlibrary{patterns}
\hypersetup{
    colorlinks=true,
    linkcolor=blue,
    filecolor=magenta,      
    urlcolor=cyan,
    pdftitle={Overleaf Example},
    pdfpagemode=FullScreen,
    }

\DeclareMathOperator{\brau}{\rm{Br}}

\newcommand{\eN}{\mathbb{N}}
\newcommand{\eZ}{\mathbb{Z}}

\newcommand{\eQ}{\mathbb{Q}}
\newcommand{\eR}{\mathbb{R}}
\newcommand{\eC}{\mathbb{C}}

\newcommand{\eA}{\mathbb{A}}
\newcommand{\p}{\mathbb{P}}

\newtheorem{tho}{Theorem}[section]
\newtheorem{cor}[tho]{Corollary}
\newtheorem{lemme}[tho]{Lemma}

\newtheorem{exam}[tho]{Example}
\newtheorem{prop}[tho]{Proposition}

\newtheorem{rmk}[tho]{Remark}

\title{Solubility for families of norm equations coming from abelian number fields}
\author{ Mathieu Da Silva }

\begin{document}

\maketitle

\textbf{Abstract :} For $F \in \eZ[s,t]$ a binary quadratic form which is irreducible over $\eQ$, and $L$ an abelian number field with class number $1$, we obtain the order of magnitude for the number of values $F(s,t)$ which are a norm from $L$. Our result relies on the fundamental lemma of sieve theory and on geometry of numbers.

\tableofcontents

  \paragraph{Notation}\begin{itemize}
      \item For $B \geqslant 2$, we write $\log_2 B$ for $\log \log B$.
       \item The letter $p$ will always denote a prime number. For $k \in \eN$ and $\nu \in \eN \cup \{0\}$,  we write $p^\nu \mid \mid k$ if $p^\nu \mid k$ and $p^{\nu + 1} \nmid k$.
 \item Let $L$ be a number field. We denote by $\mathcal{O}_L$ its ring of integers, by $h_L$ its class number, and by $C_L := \eA_L^\times/L^\times$ its idèle class group.
  The letter $\mathfrak{p}$ stands for a prime ideal of $\mathcal{O}_L$. We let $\mathcal{F}_L$ (\textit{resp.} $\mathcal{I}_L$, \textit{resp.} $\mathcal{P}_L$) be the set of fractional ideals (\textit{resp.} integral ideals, \textit{resp.} prime ideals) of $\mathcal{O}_L$. If $\mathfrak{a}$ is an ideal of $\mathcal{O}_L$ we denote by $N_{L / \eQ}(\mathfrak{a}) $ its norm and we introduce the function 
 \[ r_L(k) := \# \{ \mathfrak{a} \in \mathcal{I}_L : N_{L / \eQ}(\mathfrak{a}) = k\}.\]For instance, if $L = \eQ(i)$, the quantity $4 r_L(k)$ equals the number of representations of $k$ as a sum of two squares. We also introduce the set 
 \[ \mathcal{N}_L := \{ N_{L/\eQ} (x) : x \in L\}. \]

\item A variety over a field $K$ is an integral separated scheme of finite type over $K$. For each point $x$ of a scheme $X$, we denote by $\kappa(x)$ its residue field. If $X$ is a $K$-scheme, the notation $\overline{X}$ stands for the base change $X \times_K \overline{K}$ where $\overline{K}$ denotes an algebraic closure of $K$.

\item For $k \in \eN$ and $z > 0$, we introduce the quantity
\[ \omega(k,z) := \#\{ p \mid k : p \leqslant z \}.\]

\item For any integers $k,\ell \in \eN$, we write $\ell \mid k^\infty$ if $p \mid \ell $ implies $p \mid k$. Note that $1 \mid k^\infty$ for any $k \in \eN$.

\item For any complex number $s$, we denote by $\sigma $ its real part and by $\tau$ its imaginary part.

\item A function $f : \eN \to \eC$ is said to be multiplicative if $f(nm)= f(n)f(m)$ whenever $n$ and $m$ are coprime integers. For any multiplicative function $f$, we denote by $D_f$ its Dirichlet series
\[ D_f (s) := \sum_{n \geqslant 1} {f(n) \over n^s}.\]

\item For any irreducible binary form $F \in \eZ[s,t]$, we denote by $\rho_F^-$ the multiplicative function defined by 
\[ \label{tau_F} \rho_F^-(k) := \# \{ \xi \bmod k : F(\xi , 1 ) \equiv 0 \bmod k \}.\]We also introduce 
\[ \label{tau_F0} \rho_F(k) := \# \{ (\xi_1, \xi_2) \bmod k : F(\xi_1, \xi_2 ) \equiv 0 \bmod k\}.\]For any $a,k \in \eN$, we define 
\[ \rho_F^-(k,a) := \prod_{\substack{p^\nu \mid \mid k \\ p \nmid a}} \rho_F^-(p^\nu).\]

\end{itemize}
\section{Introduction}\label{section 1}

\textbf{1.1 Motivation.} Let $L/\eQ$ be an abelian number field of degree $n \geqslant 2$, and $F \in \eZ[s,t]$ be a binary form which is irreducible over $\eQ$. This paper is concerned with the local and global solubility of the norm equation 
\[\label{norm eq} \tag{1.1} Y_{F,L} : \quad N_{L/ \eQ} (\bm x) = F(s,t) \quad \left(\bm x \in \eQ^n, (s,t) \in \eZ^2\right),\]where $N_{L/ \eQ} $ is considered as a form over $\eQ$ after a choice of integral basis $(\omega_1, \dots, \omega_n)$ of $L$. If $x \in L$ has coordinates $\bm x := (x_1, \dots, x_n) \in \eQ^n$ in the basis $(\omega_1, \dots, \omega_n)$, we write $N_{L/\eQ}(\bm x)$ for $N_{L /\eQ}(x)$. More precisely, letting 
\[ \pi_{F,L} : Y_{F,L} \longrightarrow \eA^2_\eQ \]be the projection on $(s,t)$, 
we provide a lower bound for the quantity 
\[ \tag{1.2} \label{NFC0} N_{F,L}(B) := \# \left\{ (s,t) \in \eA^2(\eZ) : \max (|s|,|t|) \leqslant B, \pi_{F,L}^{-1}(\eQ) \neq \emptyset  \right\}, \]when $\deg F = 2$, $\mathcal{O}_L$ is a principal ideal domain, and $Y_{F,L}$ admits a non-trivial solution. If $L $ and $K := \eQ[x] / (F(x,1))$ are Galois over $\eQ$, we also provide an upper bound for the quantity
\[ \tag{1.3} \label{def Nloc2}  N_{F,L}^\mathrm{loc}(B) := \# \left\{ (s,t) \in \eA^2(\eZ) : \max (|s|,|t|) \leqslant B, \pi_{F,L}^{-1}(\eA_\eQ) \neq \emptyset  \right\}, \]where $\eA_\eQ$ denotes the adèle ring of $\eQ$. In particular, since $N_{F,L}^\mathrm{loc}(B)$ is a natural upper bound for $ N_{F,L} (B)$, we get that the lower bound obtained for $ N_{F,L} (B) $ is sharp. Under our assumptions, it yields
\[ \label{estim pr} \tag{1.4}  N_{F,L} (B)  \asymp {B^2 \over (\log B)^{1 - {r \over n}}},\]where $r$ is the number of irreducible factors of $F$ in $L [s,t]$.\\

A more geometric point of view on general norm equations similar to (\ref{norm eq}) has been studied in \cite{Wei} and \cite{CTHS}. In particular, the Brauer group of such varieties is well understood in this setting. Note that the most recent results from Wei \cite{Wei} require $L / \eQ$ to be abelian.\\

Our question seems to fall within the scope of the Loughran--Smeets conjecture \cite{LS}. However, some technical assumptions required in that conjecture are not satisfied here, due to the fact that we are counting integral (and not rational) points on the basis. For instance, the morphism $\pi_{F,L}$ is not proper, so we cannot directly apply \cite[th. 1.5]{LS} to get an upper bound for $N_{F,L}^\mathrm{loc}(B)$. Furthermore, our problem does not seem to reduce to a fibration 
\[ \pi : X \longrightarrow \p^1_\eQ,\]where $X$ is a smooth variety. Indeed, if $n \neq \deg F$, taking the induced map \[ Y_{F,L} \longrightarrow \eA^2_\eQ \smallsetminus \{(0,0)\} \longrightarrow \p^1(\eQ)\]changes the form of the fibres, since we need to ensure that $\pi^{-1} ([s :t])$ is well-defined for $[s:t] \in \p^1(\eQ)$. We get equations of the form 
\[ s^{\deg F} N_{L/\eQ}(\bm x) = F(s,t)\]for which our method fails.\\

Nevertheless, the computation of the $\Delta$-invariant defined by \cite[(3.11)]{LS}, using \cite[th 1.5]{LS} in the case of $\pi_{F,L}$. This reveals that estimate (\ref{estim pr}) agrees with the order of magnitude predicted by conjecture \cite[1.6]{LS}. Our result differs from \cite[(5.6)]{LS} by a factor $\log B$ because we do not get the contribution of the point at infinity. This is proven in §1.2.\\

Our method and result generalise \cite{Sofos}, in which a sharp lower bound is obtained for the quantity $N_{F,L}(B)$ in the case where $L/ \eQ$ is quadratic. In this case, we recover a conic bundle and a proper model can be constructed, so that \cite[th. 1.1]{Sofos} is relevant with regard to conjecture \cite[1.6]{LS}. For technical reasons, the degree of $F$ in \cite[th. 1.1]{Sofos} cannot be strictly greater than $3$. The obstruction to extending this result to $F$ of any degree comes from the size of the error term when using the method of Daniel \cite{Daniel}, which relies on the classical estimates for the lattice counting problem. For the same reasons, allowing $L/\eQ$ to have any degree $n \geqslant 2$ makes our strategy work only in the case $\deg F = 2$.\\

We now state our main result.

\begin{tho}\label{th}
Let $L/\eQ$ be an abelian extension of finite degree $n \geqslant 2$ and let $F \in \eZ[s,t]$ be any irreducible binary quadratic form. Assume that \begin{enumerate}
    \item[(i)] the ring $\mathcal{O}_L$ is a principal ideal domain;
    \item[(ii)] there exist $(s_0,t_0) \in \eZ^2$ and $x \in L$ such that $N_{L/\eQ}(x) = F(s_0,t_0)$ and $F(s_0, t_0)$ is coprime to the conductor of $L$.
\end{enumerate}For $B \geqslant 2$, the quantity $ N_{F,L}(B) $ defined by $($\ref{NFC0}$)$ satisfies
\[ N_{F,L}(B) \asymp   {B^2 \over   (\log B)^{ 1-{r \over n}  }} , \]where $r$ is the number of irreducible factors of $F$ in $L[s,t]$.
 \end{tho}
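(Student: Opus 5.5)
The proof splits into an upper bound for $N^{\mathrm{loc}}_{F,L}(B)$, which dominates $N_{F,L}(B)$, and a matching lower bound for $N_{F,L}(B)$. Both rest on one arithmetic description of norms from $L$. Since $L/\eQ$ is abelian with $h_L=1$, a positive integer $m$ coprime to the conductor $f_L$ is a norm of an element of $\mathcal{O}_L$, up to sign and units, as soon as every prime $p\mid m$ whose residue degree $f_p$ is $>1$ occurs to an exponent divisible by $f_p$. Indeed, each ideal of norm $m$ is then principal, so $m=\pm N_{L/\eQ}(x)$.

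\textbf{Upper bound.} A prime $p\nmid f_L$ that is inert-type in $L$ (i.e.\ $f_p>1$) and divides $F(s,t)$ to exactly the first power makes $F(s,t)$ fail to be a local norm at $p$. Hence $F(s,t)$ is then not a local norm, and $N^{\mathrm{loc}}_{F,L}(B)$ is at most the number of $(s,t)$ with $\max(|s|,|t|)\leqslant B$ such that $F(s,t)$ has no prime $p\leqslant z=B^{\delta}$ with $f_p>1$ and $p\,\|\,F(s,t)$.

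I estimate this count with an upper-bound sieve of Selberg or fundamental-lemma type applied to the sequence $F(s,t)$. The relevant local densities are $\rho_F(p)/p^2\approx \rho_F^-(p)/p$ for primes with $f_p>1$, and they come from counting lattice points in the lattices $\{F(s,t)\equiv 0 \bmod d\}$. The resulting main term is
\[ B^2\prod_{p\leqslant z,\ f_p>1}\Bigl(1-\frac{\rho_F^-(p)}{p}\Bigr). \]
By Chebotarev in the compositum of $L$ and $K$, the mean of $\rho_F^-(p)\mathbf 1_{f_p>1}$ over primes equals $1-r/n$. Here one counts split primes of $L$ weighted by the number of roots of $F$: the average of $\rho_F^-(p)\mathbf 1_{p\text{ split in }L}$ equals $r/n$, because $r$ is the number of orbits of $\mathrm{Gal}(\overline{\eQ}/L)$ on the roots. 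Mertens then gives $(\log B)^{-(1-r/n)}$, which is the claimed upper bound.

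\textbf{Lower bound.} Using (ii), I restrict $(s,t)$ to a congruence class modulo a power of $f_L$ and of $2\,\mathrm{disc}(F)$ lifting $(s_0,t_0)$. This fixes $F(s,t)$ as a local norm at the primes dividing $f_L$ and at the bad primes, and it fixes the sign. I then count the $(s,t)$ in this class for which $F(s,t)$ has no prime factor $p$ with $f_p>1$ and $p\nmid f_L$, apart from a bounded bad part handled by the congruence condition. Every such value lies in $\mathcal{N}_L$. Because $\deg F=2$, every prime factor of $F(s,t)$ is at most $B^{2+\epsilon}$, so it suffices to sieve small primes $p\leqslant B^{\delta}$ with the fundamental lemma. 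Large inert primes are handled separately: $F(s,t)\ll B^2$, so at most $O(1)$ of its prime factors exceed $B^{\delta}$.

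The main obstacle is this last step, showing that the $(s,t)$ whose $F(s,t)$ has an inert-type prime factor $p>B^{\delta}$ but is otherwise sieved form a small proportion. I would write $F(s,t)=pm$ with $m\leqslant B^{2-\delta}$ free of inert-type small primes and switch roles, counting for each $m$ the lattice points with $m\mid F(s,t)$. Lattice-point counting in the region $\max(|s|,|t|)\leqslant B$ gives error terms of size $O(B/\sqrt{\det})$, and these are admissible precisely because $F$ is quadratic, so that $m$ can be taken up to about $B^{2-\delta}$. Summing the densities $\rho_F(m)/m^2$ over such $m$ produces a factor $(\log B)^{r/n-1}$ multiplied by $\sum_{B^{\delta}<p\leqslant B^2}1/p\ll \log(2/\delta)$. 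For the lower bound this is not obviously small. To make it negligible, I would instead lower-bound directly: restrict to those $(s,t)$ with $F(s,t)$ free of all inert-type primes $\leqslant B^{\delta}$ and with at most one large inert-type factor. Then I control the count of $(s,t)$ for which that large factor occurs to exactly the first power through an upper-bound sieve on $m$, using a Brun–Titchmarsh-type bound on the cofactor. I expect the bookkeeping of this switching argument, together with the uniformity of the fundamental lemma in the level of distribution coming from the lattice counts, to be the technical heart of the proof.
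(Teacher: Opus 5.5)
Your upper bound is sound and differs from the paper only in the tool used. You run a fundamental-lemma upper-bound sieve over the primes $p\leqslant B^{\delta}$ with $f_p>1$, removing the classes modulo $p^2$ on which $p\,\|\,F(s,t)$. Your Burnside/Chebotarev computation of the mean $1-r/n$ of $\rho_F^-(p)\mathds{1}_{f_p>1}$ is correct. The paper instead bounds $N^{\mathrm{loc}}_{F,L}(B)$ by $\sum\varpi(F(s,t))$ and quotes the Nair--Tenenbaum-type bound of \cite{BT} (Proposition~\ref{coro}). Your version is more self-contained; the paper's is shorter.

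The lower bound has a genuine gap, exactly at the step you call the ``technical heart''. After sifting the inert-type primes up to $B^{\delta}$, the pairs whose $F(s,t)$ still has an inert-type prime in $(B^{\delta},B^{2}]$ are not a small part of the sifted set $S_\delta$. Heuristically, only a proportion of order $\delta^{\kappa}$ of $S_\delta$, with $\kappa=1-r/n$, has no inert-type factor at all. So you would need the exceptional set asymptotically, with relative error $o(\delta^{\kappa})$. Divisor switching and Brun--Titchmarsh give only upper bounds that lose constant factors, and your own estimate $\sum_{B^\delta<p\leqslant B^2}1/p\asymp\log(2/\delta)$ is the symptom. Restricting to ``at most one large inert factor'' does not help: heuristically, values with exactly one such factor outnumber those with none by a factor of order at least $\kappa\log(2/\delta)$. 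Taking $\delta$ large instead leaves the range of the fundamental lemma. You would then need a $\kappa$-dimensional lower-bound sieve to nearly the full range, plus a sharp comparison of constants with the leftover one-large-prime contribution. Nothing in your sketch supplies this, and it is not routine when $\kappa>1/2$, which is every case except $n=2$ and $(n,r)=(4,2)$.

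The missing idea is to let an arithmetic weight, not the sieve, detect the large primes. Choose $(s_1,t_1)$ with $F(s_1,t_1)$ a unit modulo $W$ and $\chi(F(s_1,t_1))=1$ for all $\chi\in\widehat G$; this is where (ii) enters. For squarefree $F(s,t)$ with $(s,t)\equiv(s_1,t_1)\bmod W$, Lemma~\ref{detecteurs} gives $\mathds{1}_{\mathcal N_L}(F(s,t))=r_L(F(s,t))\,n^{-\omega(F(s,t))}$. Since only $O(1/\eta)$ prime factors exceed $z=B^{\eta}$, this yields
\[ N_{F,L}(B)\gg_\eta\sum_{\substack{(s,t)\in(\eZ\cap[-B,B])^2\\ \gcd(s,t)=1,\ (s,t)\equiv(s_1,t_1)\bmod W}}\mu^2(F(s,t))\,\frac{r_L(F(s,t))}{n^{\omega(F(s,t),z)}}. \]
The factor $r_L$ vanishes on every squarefree value with a non-split prime factor, large or small, so large primes never have to be sieved. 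The fundamental lemma is applied only to the small-prime weight $n^{-\omega(F,z)}$ (Proposition~\ref{mise en place sieve}). This reduces everything to a power-saving asymptotic for $M_d(B)$, uniform in $d\leqslant B^{1/(8n^2)}$ (Proposition~\ref{estim Md}).

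That asymptotic is the real work, in three steps:
\begin{itemize}
\item Expand $r_L=\mathds{1}\ast\psi_L$ as a sum of $\Psi_L(\bm k)$ and count lattice points via (3.6)--(3.7).
\item Treat large $k_i$ by the switch $k_i\mapsto F(s,t)/(k_1\cdots k_{n-1})$, which is legitimate because $\chi(F(s,t))=1$ for all $\chi$.
\item Bound the resulting sums with the convexity bound for $L(s,\widetilde\chi)$. This needs $F$ irreducible over $L$; the reducible case is reduced to the subfield $L_0$, where $r_L(p)=2r_{L_0}(p)$.
\end{itemize}
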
Note that in the case $n = 2$, we recover \cite[th. 1.1]{Sofos}.

\begin{rmk}
    It is still unknown whether there exist infinitely many algebraic number fields of class number $1$ \cite[chap. I. §6. p37]{Neukirch}. However, explicit examples of such number fields can be found in \cite{LMFDB}.
\end{rmk}

\begin{exam}
For instance, take $L := \eQ[x] / (x^3 - 3x -1)$ for which $\mathcal{O}_L$ is a principal ideal domain, and $F(s,t) = s^2 -2t^2$. As an integral basis for $L$, we take $(1,\omega,\omega^2)$ where $\omega$ is a root of $x^3 - 3 x - 1$. We obtain for $\bm x = (x_0,x_1,x_2) \in~\eQ^3$  
\[  N_{L / \eQ}(\bm x) = x_0^3 + x_1^3 + x_2^3 - 3 x_0x_1^2 - 3x_1x_2^2-3 x_0x_2^2 + 6 x_0x_1x_2 \]and Theorem \ref{th} provides

\[ \# \left\{ (s,t) \in [-B,B]^2 : \exists \bm x \in \eQ^3, N_{L/\eQ} (\bm x) = s^2 -2t^2 \right\} \asymp {B^2 \over (\log B)^{2/3}} \]as $B $ goes to $+ \infty$. This set is not empty since $(s,t) = (1,1)$ and $(x_0,x_1,x_2)= (0,1,1)$ are a solution.\\
\end{exam}
  
 We recall that the Hasse norm principle does not hold for abelian extensions in general. However, under the assumptions in Theorem \ref{th}, the two quantities $ N_{F,L} (B)$ and $N^\mathrm{loc}_{F,L} (B)$ have the same order of magnitude.

 \begin{prop}\label{coro}
     Let $L/\eQ$ be a Galois extension of finite degree $n \geqslant 2$ and let $F \in \eZ[s,t]$ be any irreducible binary form such that $K := \eQ[x]/(F(x,1))$ is Galois over $\eQ$. Then, when $B$ goes to $+ \infty$, we have
\[  N^\mathrm{loc}_{F,L} (B) \ll {  B^2 \over  (\log B)^{ 1-{r \over n}  }},   \]where $r$ is the number of irreducible factors of $F$ in $L[s,t]$.
 \end{prop}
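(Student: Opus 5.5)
Our plan is to turn local solubility into a congruence condition on the prime factorisation of $F(s,t)$ and then bound the count with an upper-bound sieve. Fix $(s,t)$ with $F(s,t)\neq 0$ such that $\pi_{F,L}^{-1}(\eA_\eQ)\neq\emptyset$, so that $F(s,t)$ is a local norm from $L_v$ for every place $v$. Since $L/\eQ$ is Galois, all places above a given $p$ have the same local degree $f_p e_p$. If $p$ is unramified in $L$, then $m$ is a local norm at $p$ if and only if $f_p \mid v_p(m)$. Hence, for $p\nmid \mathrm{disc}(L)$ and $p\,\|\,F(s,t)$, local solubility forces $f_p=1$, which means $p$ splits completely in $L$.

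We will then discard the $(s,t)$ for which $F(s,t)$ has many repeated prime factors, or large common factors with $\mathrm{disc}(L)\,\mathrm{disc}(F)$. Standard arguments bound these contributions; for instance one can restrict to primes $p\le z=B^{\delta}$ and note that the square-full part is rarely large. This reduces the problem to bounding
\[ \#\{(s,t)\in[-B,B]^2 : p \mid F(s,t),\ p\le z,\ p\nmid \mathrm{disc}(L)\mathrm{disc}(F) \Rightarrow p \text{ splits completely in } L \text{ or } p^2\mid F(s,t)\}. \]
It suffices to sieve the lattice points $(s,t)$ by the condition $p\nmid F(s,t)$ for all primes $p\le z$ that do not split completely in $L$. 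The upper-bound (Selberg, or fundamental lemma) sieve in dimension two, with local densities $\rho_F(p)/p^2$ and level of distribution $B^{2-\epsilon}$ coming from lattice point counting, gives
\[ N^{\mathrm{loc}}_{F,L}(B)\ll B^2\prod_{\substack{p\le z\\ p \text{ not split in } L}}\Bigl(1-\frac{\rho_F(p)}{p^2}\Bigr), \]
up to the admissible error. The terms with $p^2\mid F$ contribute at most a density of order $1/p^2$, which is summable and so harmless.

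The main step, and the expected obstacle, is the asymptotic evaluation of $\sum_{p\le z,\ p \text{ not split in } L}\rho_F^-(p)/p$. Here $\rho_F(p)/p^2\approx \rho_F^-(p)/p$, and $\rho_F^-(p)$ is the number of degree-one primes of $K$ above $p$; when $K$ is Galois this equals $\deg F$ if $p$ splits completely in $K$ and $0$ otherwise, apart from ramified primes. Both $K$ and $L$ are Galois, so the compositum $KL$ is Galois, and Chebotarev's theorem applies to it. The primes splitting completely in $K$ have density $1/[K:\eQ]$, and those splitting completely in both $K$ and $L$ (that is, in $KL$) have density $1/[KL:\eQ]$. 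Mertens-type estimates then give
\[ \sum_{p\le z,\ p\text{ not split in }L}\frac{\rho_F^-(p)}{p} = \deg F\Bigl(\frac{1}{[K:\eQ]}-\frac{1}{[KL:\eQ]}\Bigr)\log_2 z+O(1) = \Bigl(1-\frac{\deg F}{[KL:\eQ]}\Bigr)\log_2 B+O(1). \]
Since $K$ is Galois, $F$ factors over $L$ into $r=[K\cap L:\eQ]$ irreducible factors. Therefore
\[ [KL:\eQ]=\frac{\deg F\cdot n}{[K\cap L:\eQ]}=\frac{\deg F\, n}{r}, \]
so the exponent becomes $1-r/n$. The sieve product is then $\ll(\log B)^{-(1-r/n)}$, which gives the claimed bound. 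The delicate points are checking that the lattice sieve has enough level with the error terms uniform in the moduli, and handling the finitely many ramified or bad primes, which only affect the implied constant.
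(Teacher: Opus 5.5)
Your proof is correct in outline, but it takes a different route from the paper.

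\textbf{How the routes differ.} The paper bounds $N^{\mathrm{loc}}_{F,L}(B)$ by $\sum_{|s|,|t|\leqslant B}\varpi(F(s,t))$, where $\varpi(k)=\prod_{p\mid k}\mathds{1}_{[L_p:\eQ_p]\mid v_p(k)}$, quoting Loughran--Matthiesen for the local criterion. It then applies the Nair--Tenenbaum-type theorem of de la Bret\`eche--Tenenbaum for binary forms as a black box, which gives $B^2\prod_{p\leqslant B}\bigl(1+\rho_F(p)(\varpi(p)-1)/p^2\bigr)$ directly. You instead derive the criterion from local class field theory at unramified primes. You then run an explicit upper-bound sieve on $\eZ^2$, sifting only by the primes $p\leqslant z=B^{\delta}$ that are unramified and not totally split in $L$. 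From there the two proofs coincide: the same Chebotarev computation in $KL$ and the same identity $\deg F/[KL:\eQ]=r/n$.

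The paper's route is shorter and treats all primes and prime powers uniformly. Yours is self-contained, using only the fundamental lemma and lattice-point counting already used for the lower bound. Your restriction to unramified primes is also the accurate form of the local criterion. At a ramified prime, divisibility by $[L_p:\eQ_p]$ can fail, e.g.\ $N_{\eQ(i)/\eQ}(1+i)=2$, so the paper's $\varpi$ must be adjusted at those finitely many primes.

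\textbf{One step must be reformulated.} Sieving by $p\nmid F(s,t)$ bounds only a subset of the set you need. The pairs with $p^2\mid F(s,t)$ cannot be added back afterwards: for a fixed small non-split $p$ they already form a set of positive density $\rho_F(p^2)/p^4$, so they cannot be absorbed as an additive error term.

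What works is to sieve out, for each such $p$, the residue classes modulo $p^2$ on which $v_p(F(s,t))=1$. These have density $\rho_F(p)/p^2-\rho_F(p^2)/p^4=\rho_F^-(p)/p+O(p^{-2})$. So your summable $O(p^{-2})$ enters the sieve product multiplicatively and only changes the constant.

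With the fundamental lemma you only need level $D=z^{s}$, a small fixed power of $B$. At that level even the trivial count of each coset modulo $d^2$ gives an admissible error. So the claimed level $B^{2-\epsilon}$ is unnecessary. The preliminary discarding step is also unnecessary, since weakening the condition to $p\leqslant z$ already enlarges the set you are bounding.
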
 This result is proven in section \ref{proof coro}.

 \begin{rmk}
     Since $F$ is irreducible over $\eQ$, we have that $\eQ[x]/(F(x,1))$ is Galois if and only if $\eQ[x] /( F(1,x))$ is Galois. Indeed, these two polynomials have the same splitting field since their roots are reciprocals of each other.
 \end{rmk}

 \begin{rmk}
     In particular, under the assumptions of Theorem \ref{th}, we have 
     \[ N^\mathrm{loc}_{F,L} (B) \asymp N_{F,L} (B). \]Following Odoni \cite{Odoni} and Browning--Newton \cite{BN}, we expect that 
     \[ \lim_{B \to + \infty} { N^\mathrm{loc}_{F,L} (B) \over N_{F,L} (B) } = \# (\eQ^\ast \cap N_{L/\eQ}(C_L) )/ N_{L/\eQ}(\eQ^*) > 0\]where $(\eQ^\ast \cap N_{L/\eQ}(C_L) )/ N_{L/\eQ}(\eQ^\ast)$ is called the knot group of $L/\eQ$. This does not prove any kind of Hasse norm principle for $L/ \eQ$, since we make the strong assumption that there exists $(x,(s,t)) \in L \times \eZ^2$ such that $F(s,t) = N_{L/\eQ}(x)$ and $F(s, t)$ is coprime to the conductor of $L$.
 \end{rmk}

\textbf{1.2 Link with the Loughran--Smeets conjecture.} We now explain how the problem of estimating $N_{F,L}(B)$ is linked to \cite[conj. 1.6]{LS}. Assume that $L / \eQ$ is Galois and choose an integral basis $(\omega_1, \dots , \omega_n)$ of $\mathcal{O}_L$. Let $X$ be the variety defined by 
     \[ X : \quad N_{L/\eQ} \left( \sum_{i = 1}^n x_i \omega_i \right)= F(s,t) \quad \subset \eA_\eQ^{n+2}, \]and consider the projection $\pi$ on $(s,t) \in \eA_\eQ^2$. For $B \geqslant 2$, we have \[ N_{F,L}(B) = \# \left\{ (s,t) \in \eA^2(\eZ) : \max(|s|,|t|) \leqslant B : \pi^{-1}(s,t)(\eQ) \neq \emptyset \right\}. \]Since $F$ is irreducible over $\eQ$, the only codimension one point whose fibre is singular is $(F = 0)$ and this fibre is given by  
     \[V : \quad N_{L/\eQ} \left( \sum_{i = 1}^n x_i \omega_i \right)= 0.\]The variety $V$ is projective, and we can compute the invariant $\delta(V)$ as defined in \cite[§3.2]{LS}. In our case, the residue field is $\kappa := \mathrm{Frac} \left(\eQ[s,t]/ (F(s,t)) \right)$ and we consider $\kappa' := \kappa L \subset \overline{\kappa}$, the compositum of $\kappa$ and $L$. The extension $\kappa' / \kappa$ is finite, Galois and satisfies (see \cite[prop. 3.19]{Milne})
     \[ \mathrm{Gal}(\kappa' / \kappa) \simeq \mathrm{Gal}(L/ L \cap \kappa),  \]so it is of degree at most $ [L : \eQ]$, which depends on the number of irreducible factors of $F$ in $L[s,t]$. Keeping the notation from \cite[§3.2]{LS}, we get
     \[ \delta(V) = {1 \over \# \mathrm{Gal}(\kappa'/ \kappa)} \# \left\{ \gamma \in \mathrm{Gal}(\kappa'/ \kappa) : \begin{tabular}{c} $\gamma$ fixes a geometric irreducible component \\
of $V$ of multiplicity $1$ \end{tabular} \right\} .\]We now compute $ \# \mathrm{Gal}(\kappa'/ \kappa)$. Let $K := \eQ[x] / (F(x,1))$. In the field $\kappa$, we have that $\alpha :=s/t$ is algebraic with $F(x,1)$ as minimal polynomial. Hence, $\eQ(\alpha) \simeq K$. Now, writing $s = \alpha t$ in $\kappa$ leads to $\kappa \simeq \eQ(\alpha) (t)$, so that 
\[ \kappa \simeq K(t) \]is the field of rational functions over $K$. Then, the algebraic elements of $\kappa$ are precisely the elements of $K$, so it follows that $L \cap \kappa = L \cap K$ and 
\[ \mathrm{Gal}(\kappa' / \kappa) \simeq \mathrm{Gal}(L/ L \cap K).\]Now, let
\[ f(s) := F(s,1) = \prod_{i = 1}^r f_i(s)  \]be the decomposition of $f$ as a product of irreducible factors $f_i \in L[s]$. \\

Since $L/\eQ$ is Galois, all the polynomials $f_i$ have the same degree $d$. To see this, write $K = \eQ(\beta)$ where $\beta $ is a root of $f$. Then $\beta$ is a root of one of the $f_i$, say $f_1$. Thus, \[L[s]/(f_1) \simeq L(\beta) = LK\]and we get 
\[ \deg f_1 = [L(\beta) : L] = [K : K \cap L]. \]The same argument applies for all the conjugates of $\beta$, which are roots of the polynomials $f_i$, hence the result.\\

Therefore, we get $d r = \deg F = [K: \eQ]$ with $d = [K :K \cap L]$. It follows from the multiplicativity of the degrees that $r = [L \cap K : \eQ]$, so that we finally get
\[  \# \mathrm{Gal}(\kappa'/ \kappa) = [L : L \cap K] = {n \over r}, \label{calc [L: L cap K]}\tag{1.5}\]where $r$ is the number of irreducible factors of $F$ in $L[s,t]$. We thus recover 
\[ \delta(V) = {r \over n} \]since the only element of $\mathrm{Gal}(\kappa' / \kappa) \simeq \mathrm{Gal}(L/ L \cap K)$ which fixes a geometrically irreducible component of $V$ is the identity.\\

\textbf{1.3 Generalisations.} In another direction, Odoni \cite{Odoni} shows that for any algebraic number field $L/ \eQ$, there exist constants $c_L>0$, $\alpha_L \in (0,1)$ depending only on $L$, and a divisor $\gamma_L$ of the narrow class group of $L$ such that
     \[ \label{odoni} \tag{1.6} \# \left\{ m \leqslant B : \exists x \in L,\; N_{L/\eQ}(x) = m \right\} \underset{B \to + \infty}{\sim} \gamma_L^{-1} c_L {B \over (\log B)^{1-\alpha_L}},\]and $\alpha_L ={1 \over [L : \eQ]}$ if $L/\eQ$ is Galois. In particular, if $ L$ is abelian, we recover $\alpha_L = {1 \over n}$ and the exponent that appears in Theorem \ref{th}. This suggests that Theorem \ref{th} should still hold in a more general setting. One could indeed expect that for any algebraic number field $L/\eQ$ and any $F \in \eZ[s,t]$ of degree $d$ irreducible over $\eQ$, if there exist $(s_0,t_0) \in \eZ^2$ and $x \in L$ such that $N_{L/\eQ}(x) = F(s_0,t_0)$ is invertible modulo the conductor of $L$, then
     \[ N_{F,L} (B) \underset{B \to + \infty}{\sim}  \gamma_L^{-1} c_{F,L} {B^d \over (\log B)^{ 1- r \alpha_L}}, \]where $\gamma_L$ is a divisor of the narrow class group of $L$, $c_{F,L} > 0$ depends only on $L$ and $F$, $r$ is the number of irreducible factors of $F$ in $L[s,t]$, and $\alpha_L$ is as in (\ref{odoni}).\\

\textbf{1.4. Detecting the solubility.} Recall that we defined 
\[ r_L(k) := \# \{ \mathfrak{a} \in \mathcal{I}_L : N_{L / \eQ} (\mathfrak{a}) = k \}\]and the set 
 \[ \mathcal{N}_L := \{ N_{L/\eQ} (x) : x \in L\} .\]We also write $G := \mathrm{Gal}(L/\eQ)$ and, since $L/ \eQ$ is abelian (see \cite[chap. V, theorem 1.10 and chap. VII, §11]{Neukirch}), we can consider $q_L$ the conductor of $L/\eQ$, which is defined as 
 \[q_L := \min \left\{ q \in \eN : L \subset \eQ(\zeta_q) \right\},\]where $\zeta_q$ denotes a primitive $q$-th root of unity. In particular, $G$ is a quotient of $(\eZ/ q_L \eZ)^\times$, so every $\chi \in \widehat{G} := \mathrm{Hom}(G,\eC^\ast)$ is identified with a (not necessarily primitive) Dirichlet character modulo $q_L$. For $\chi \in \widehat{G}$, we denote by $q(\chi)$ its conductor, which is a divisor of $q_L$. Finally, we recall the conductor-discriminant formula \cite[chap. VII, 11.9]{Neukirch}
 \[ \mathrm{disc}(L) = \prod_{\chi \in \widehat{G}} q(\chi). \]

\begin{lemme}\label{detecteurs} Let $L/\eQ$ be an abelian extension of degree $n\geqslant 2$. A prime $p$ ramifies in $L$ if and only if $p \mid q_L$, and if $p \nmid q_L$ then we have 
\[ \mathds{1}_{\mathcal{N}_L}(p) = {1 \over n} \sum_{\chi \in \widehat{G}} \chi(p). \]
\end{lemme}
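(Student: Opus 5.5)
Both parts come from class field theory for subfields of cyclotomic fields, combined with local class field theory at unramified primes.

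\emph{Ramification.} Since $L \subset \eQ(\zeta_{q_L})$, any prime $p \nmid q_L$ is unramified in $\eQ(\zeta_{q_L})$ and hence in $L$. For the converse, take $p \mid q_L$ and write $q_L = p^a m$ with $p \nmid m$. If $p$ were unramified in $L$, then $L\eQ(\zeta_m)$ would be unramified at $p$. It would still be contained in $\eQ(\zeta_{q_L}) = \eQ(\zeta_{p^a})\eQ(\zeta_m)$. The inertia group at $p$ of $\mathrm{Gal}(\eQ(\zeta_{q_L})/\eQ) \simeq (\eZ/p^a)^\times \times (\eZ/m)^\times$ is $(\eZ/p^a)^\times \times \{1\}$. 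The maximal subfield unramified at $p$ is the fixed field of this group, namely $\eQ(\zeta_m)$. This gives $L \subset \eQ(\zeta_m)$ with $m < q_L$, contradicting the minimality of $q_L$. (Equivalently, one can use the conductor-discriminant formula together with $q_L = \mathrm{lcm}_\chi q(\chi)$.)

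\emph{Norm detection.} Fix $p \nmid q_L$, so $p$ is unramified in $L$. Let $f$ be its residue degree; it equals the order of the Frobenius $\mathrm{Frob}_p \in G$, which is the image of $p \bmod q_L$ under $(\eZ/q_L\eZ)^\times \twoheadrightarrow G$. By orthogonality of characters of the finite abelian group $G$,
\[ \frac1n \sum_{\chi\in\widehat G}\chi(p) = \frac1n\sum_{\chi}\chi(\mathrm{Frob}_p) = \mathds{1}_{\{\mathrm{Frob}_p = 1\}}, \]
which is the indicator that $p$ splits completely. It therefore suffices to show that $p \in \mathcal{N}_L$ if and only if $f = 1$.

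If $f = 1$, then $p\mathcal{O}_L = \mathfrak{p}_1\cdots\mathfrak{p}_n$ with $N(\mathfrak{p}_1) = p$, so $p$ is locally a norm everywhere. I expect the paper intends to use hypothesis (i) here: if $\mathfrak{p}_1 = (\pi)$, then $N_{L/\eQ}(\pi) = \pm p$. The sign still needs handling, for example via $N(-1) = (-1)^n$ or totally-real considerations. For a version without class number assumptions, one would instead invoke the Hasse norm theorem, which applies when $G$ is cyclic. In general $L/\eQ$ need not be cyclic, so the statement as written must be relying on $\mathcal{O}_L$ being principal, or $\mathcal{N}_L$ should be read as local norms.

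Conversely, suppose $p = N_{L/\eQ}(x)$ with $x \in L$. Then $p$ is a local norm from $L_\mathfrak{p}$, an unramified extension of $\eQ_p$ of degree $f$. Local norms from an unramified extension of degree $f$ have valuation divisible by $f$, because $N(\mathfrak{p}) = p^f$ and the norm multiplies valuations by $f$. Since $v_p(p) = 1$, this forces $f = 1$.

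The main obstacle is the direction "split completely $\Rightarrow$ global norm". The local argument is clean, but globalising needs either principality of $\mathfrak{p}$ together with control of the sign and units, or a Hasse-type principle that fails for non-cyclic abelian $L$. I would first try the principal ideal route and check whether the statement tacitly assumes (i), or whether $-1$ issues can be absorbed when $n$ is odd or $L$ is imaginary.
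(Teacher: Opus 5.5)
Your ramification argument is correct and takes a different route from the paper. You use the inertia group $(\eZ/p^a\eZ)^\times\times\{1\}$ in $\mathrm{Gal}(\eQ(\zeta_{q_L})/\eQ)$, whose fixed field is $\eQ(\zeta_m)$, and then contradict the minimality of $q_L$. The paper instead reads the statement off the conductor--discriminant formula together with $q_L=\mathrm{lcm}_\chi q(\chi)$; your version is more self-contained. Your identity $\frac1n\sum_\chi\chi(p)=\mathds{1}_{\mathrm{Frob}_p=1}$ is the same input as the paper's $\sum_\chi\chi(p)=r_L(p)\in\{0,n\}$ from Heilbronn. Your valuation argument for $p\in\mathcal N_L\Rightarrow f=1$ is also fine.

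\textbf{The gap.} You leave the converse, $f=1\Rightarrow p\in\mathcal N_L$, unproved, and the sentence ``$N(\mathfrak p_1)=p$, so $p$ is locally a norm everywhere'' is false. Complete splitting makes $p$ a local norm at $p$, at the unramified places and at $\infty$. At a ramified $\ell\mid q_L$, however, a unit need not be a local norm. For example, in $L=\eQ(\sqrt{-5})$ the prime $3$ splits, yet $(3,-5)_5=(3/5)=-1$, so $3\notin\mathcal N_L$.

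So the obstruction is not non-cyclicity, and the Hasse norm theorem cannot rescue the lemma even for quadratic $L$. The lemma as literally stated is false. The missing ingredient is global: the paper's proof silently imports hypothesis (i) of Theorem~\ref{th}. It argues that since $\mathcal O_L$ is principal, a prime is a norm of an element iff it is the norm of an integral ideal.

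\textbf{The sign.} Your sign worry is justified, and it exposes a flaw in that one-line justification: principality of $\mathfrak p_1=(\pi)$ only gives $N_{L/\eQ}(\pi)=\pm p$. Take $L=\eQ(\sqrt3)$, which has class number $1$ and $q_L=12$, and $p=11$. Then $\chi(11)=(12/11)=1$ and $N(1+2\sqrt3)=-11$, but every unit $\pm(2+\sqrt3)^k$ has norm $+1$. In fact $11\notin\mathcal N_L$, since $(11,3)_3=(11/3)=-1$. So even under (i) the statement fails.

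To finish along your principal-ideal route you need, besides (i), a hypothesis that fixes the sign:
\begin{itemize}
\item $n$ odd, so you can replace $\pi$ by $-\pi$;
\item $L$ imaginary, since an imaginary abelian field is totally imaginary and all its norms are positive;
\item or $-1\in\mathcal N_L$, for instance via a unit of norm $-1$.
\end{itemize}
With (i) and one of these, $\mathfrak p_1=(\pi)$ gives $p\in\mathcal N_L$ at once and your proof is complete. Without them the statement cannot be proved.
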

\begin{proof}
    Let $p$ be a prime ramifying in $L$. Since $q_L = \mathrm{lcm}_{\chi \in \widehat{G}}( q(\chi) )$, by the conductor-discriminant formula, this is equivalent to $p \mid \mathrm{disc}(L) $. Hence the primes ramifying in $L$ are exactly the ones dividing~$q_L$. Let $p \nmid q_L$. By \cite[th. 6]{Heilbronn}, we know that $\displaystyle\sum_{\chi \in \widehat{G}}\chi(p) = r_L(p) \in \{0,n\}$ is the number of ideals in $\mathcal{I}_L$ of norm $p$. Since $\mathcal{O}_L$ is principal, an integer $k$ is the norm of an element of $L$ if and only if it is the norm of a fractional ideal. If moreover $k = p$ is prime, it is equivalent to being the norm of an ideal in $\mathcal{I}_L$, hence the conclusion.
\end{proof}

\textbf{1.5. Setting.} Let $L/\eQ$ be an abelian extension of degree $n \geqslant 2$. We start by providing a useful characterisation of the irreducibility of $F$ over $L$. To do this, we recall that global class field theory \cite[prop. 4.3 and th. 5.1]{Tate} yields, for any abelian extensions $L_1 / K_1$, $L_2/ K_2$ with $K_1 \subset K_2$ and $L_1 \subset L_2$, a commutative diagram 
\begin{equation*} \label{corps de classe} \tag{1.7}
\begin{tikzcd}[row sep=huge, column sep = huge]
    C_{K_2} \arrow[r, "\theta_{L_2/K_2}"] \arrow[d, "N_{K_2/K_1}"' ] & \mathrm{Gal}(L_2/ K_2) \arrow[d, "j"] \\
    C_{K_1} \arrow[r, "\theta_{L_1/K_1}"'] & \mathrm{Gal}(L_1/K_1)
  \end{tikzcd}
\end{equation*}where the maps $\theta_{L_i / K_i} $ are surjective, $j : \sigma \in \mathrm{Gal}(L_2/ K_2) \longmapsto \sigma_{\mid L_1} \in \mathrm{Gal}(L_1/K_1)$ is the natural map, and $\ker \theta_{L_i / K_i} = N_{L_i / K_i}(C_{L_i})$. We return to Theorem \ref{th}. If $\chi \in \widehat{G}$, then $\chi \circ \theta_{L/\eQ} $ is a character on $C_\eQ$. If $K$ is a number field, we can thus define $\chi \circ \theta_{L/ \eQ} \circ N_{K / \eQ}$ as a character on~$C_K$, which we denote~$\widetilde{\chi}$. We can now state the following lemma. 

\begin{lemme}\label{réécriture (iii)}
    Let $L/\eQ$ be an abelian extension of degree $n \geqslant 2$, $f \in \eZ[x]$ be a polynomial of degree~$2$ which is irreducible over $\eQ$, and $K := \eQ[x] /(f)$. Then, $f$ is irreducible over $L$ if and only if for all non-trivial $\chi$ in $\widehat{G}$, the character $\widetilde{\chi}$ is non-trivial. Moreover, if $f$ is reducible over $L$, there exists a unique character $\chi \in \widehat{G}$ which is non-trivial and such that $\widetilde{\chi} $ is trivial.
\end{lemme}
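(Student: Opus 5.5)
The argument runs through the class field theory diagram (\ref{corps de classe}) applied to $K_1=\eQ$, $K_2=K$, $L_1=L$, $L_2=LK$. Since $LK/K$ is abelian, the diagram gives
\[ \theta_{L/\eQ}\circ N_{K/\eQ} = j\circ \theta_{LK/K}, \]
where $j:\mathrm{Gal}(LK/K)\to \mathrm{Gal}(L/\eQ)=G$ is restriction. The map $j$ is injective, and its image is $\mathrm{Gal}(L/L\cap K)$. Because $\theta_{LK/K}$ is surjective, the image of $\theta_{L/\eQ}\circ N_{K/\eQ}$ in $G$ is exactly $H:=\mathrm{Gal}(L/L\cap K)$.

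For $\chi\in\widehat{G}$ we therefore have $\widetilde{\chi}$ trivial if and only if $\chi|_H=1$. The characters of $G$ trivial on $H$ correspond to characters of $G/H\simeq \mathrm{Gal}(L\cap K/\eQ)$. So the set of $\chi$ with $\widetilde\chi$ trivial is a group of order $[L\cap K:\eQ]$.

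Next we relate this to the factorisation of $f$. Since $[K:\eQ]=2$, either $L\cap K=\eQ$ or $L\cap K=K$. Let $\beta$ be a root of $f$, so $K=\eQ(\beta)$. Then $f$ is reducible over $L$ exactly when $\beta\in L$, i.e. when $K\subset L$, i.e. when $L\cap K=K$. Alternatively, as in §1.2, $\deg f_1=[K:K\cap L]$.

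Putting these together:
\begin{itemize}
\item If $f$ is irreducible over $L$, then $L\cap K=\eQ$, so $H=G$ and only the trivial $\chi$ has $\widetilde\chi$ trivial.
\item If $f$ is reducible over $L$, then $G/H$ has order $2$, so it has exactly one non-trivial character. Hence there is a unique non-trivial $\chi$ with $\widetilde\chi$ trivial.
\end{itemize}
This gives the equivalence and the uniqueness claim. (The case $K$ real or imaginary plays no role, and $L\cap K$ is Galois because $L$ is abelian.)

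The main point needing care is the identification of the image of $j$ with $\mathrm{Gal}(L/L\cap K)$, which is the standard result \cite[prop. 3.19]{Milne} already used in §1.2. The surjectivity of $\theta_{LK/K}$ is what upgrades "contained in" to "equal to" $H$, and we take it directly from the stated properties of the diagram (\ref{corps de classe}).
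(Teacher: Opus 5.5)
Your proof is correct. It reorganises the paper's argument rather than reproducing it.

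The paper uses the diagram (\ref{corps de classe}) for $LK/K$ over $L/\eQ$, together with \cite[prop. 3.19]{Milne}, only for one implication: if some non-trivial $\chi$ has $\widetilde{\chi}$ trivial, then $K\cap L\neq\eQ$. The converse goes through a second diagram that factors $N_{K/\eQ}$ through $C_E$ with $E=K\cap L$, combined with the character extension lemma. Uniqueness goes through a third diagram with $\theta_{L/K}$. That diagram gives $\mathrm{Gal}(L/K)\subset\ker\chi$, hence $L^{\ker\chi}=K$, which identifies $\chi$ as the character with that index-$2$ kernel.

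You instead compute the image of $\theta_{L/\eQ}\circ N_{K/\eQ}$ exactly once, as $H=\mathrm{Gal}(L/L\cap K)$. This identifies $\{\chi : \widetilde{\chi}=1\}$ with the dual of $G/H\simeq\mathrm{Gal}(L\cap K/\eQ)$, and all three claims follow from $\#\widehat{G/H}=[L\cap K:\eQ]\in\{1,2\}$. This avoids the extension lemma and turns uniqueness into a counting statement.

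It also proves more. For $f$ of arbitrary degree, exactly $[L\cap K:\eQ]$ characters $\chi\in\widehat{G}$ have $\widetilde{\chi}$ trivial. By the computation in \S1.2, this number is $r$, the number of irreducible factors of $F$ over $L$. The quadratic hypothesis is used only to get $L\cap K\in\{\eQ,K\}$.

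Nothing is lost relative to the paper. In the reducible case your argument still pins down the distinguished character as the one whose kernel is $H=\mathrm{Gal}(L/K)$, i.e.\ with $L^{\ker\chi}=K$, just as the paper's version does.
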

\begin{proof}
   We denote by $LK$ the compositum of $L$ and $K$. Note that $f$ is irreducible over $L$ if and only if $\deg f = [LK : L]$. From the isomorphism (see \cite[prop. 3.19]{Milne})
    \[ \mathrm{Gal}(LK/ L) \simeq \mathrm{Gal}(K / K \cap L),\]we get
    \[ [LK : L] = { [K : \eQ] \over [K \cap L : \eQ] } = {\deg f \over [K \cap L : \eQ]}. \]Hence, $f$ is irreducible over $L$ if and only if $K \cap L = \eQ$.\\
    
    We now prove that $K \cap L \neq \eQ$ if and only if there exists $\chi \in \widehat{G}$ such that $\widetilde{\chi} $ is trivial. Assume that $E := K \cap L$ is not $\eQ$. Then we have the following commutative diagram 

\begin{equation*}
\begin{tikzcd}[row sep=huge, column sep = huge]
 C_K \arrow[d, "N_{K/E}"] \arrow[black, bend right]{dd}[black,swap]{N_{K/ \eQ}} & \\
   C_E \arrow[r, "\theta_{L/E}"] \arrow[d, "N_{E/\eQ}"] & \mathrm{Gal}(L/E) \arrow[d, "i"] \\
    C_\eQ \arrow[r, "\theta_{L/ \eQ}"'] & G
  \end{tikzcd}
\end{equation*}where $i$ is the natural inclusion. The character extension lemma ensures that this character extends to a non-trivial character $\chi \in \widehat{G}$ such that $\chi \circ i$ is the trivial character on $\mathrm{Gal}(L /E)$. It follows that 
\[ \widetilde{\chi} =\chi \circ \theta_{L/ \eQ} \circ N_{K / \eQ} = \chi \circ \theta_{L/ \eQ} \circ N_{E / \eQ} \circ N_{K / E} = \mathrm{id}\circ \theta_{L / E} \circ N_{K/ E}\]is the trivial character on $C_K$.\\

Assume that there exists $\chi \in \widehat{G}$ non-trivial such that the character $\widetilde{\chi}$ is trivial. Then, we have $\theta_{L/\eQ} (N_{K/ \eQ}(C_K)) \subset \ker \chi$. Hence, the canonical map $G \longrightarrow G / \ker \chi $ induces a surjective map 
\[G / (\theta_{L/\eQ} \circ N_{K/ \eQ} )(C_K) \longrightarrow G / \ker \chi. \]From the commutative diagram 
\begin{equation*}
\begin{tikzcd}[row sep=huge, column sep = huge]
   C_K \arrow[r, "\theta_{LK/K}"] \arrow[d, "N_{K/ \eQ}"'] & \mathrm{Gal}(LK/ K) \arrow[d, "j"] \\
    C_{\eQ} \arrow[r, "\theta_{L/\eQ}"'] & G \\
  \end{tikzcd}
\end{equation*}where $j$ is the natural map, we deduce that there exists a surjective morphism (see \cite[chap. VI, th. 1.10]{Lang})
\[\mathrm{Gal}(K \cap L / \eQ) \simeq G / \mathrm{Gal}(L / K \cap L) \longrightarrow G / \ker \chi. \]Now, since $\chi$ is non-trivial, $\ker \chi $ is a proper normal subgroup of $G$ so it corresponds to a field $L^\chi := L^{\ker \chi }$ such that $[L^\chi : \eQ] > 1$ and $\ker \chi = \mathrm{Gal}(L/ L^\chi)$. Hence, we have found a surjective map 
\[ \mathrm{Gal}(K \cap L / \eQ) \longrightarrow \mathrm{Gal}(L^\chi/\eQ )\]where $\# \mathrm{Gal}( L^\chi / \eQ) > 1$. Therefore $K \cap L \neq\eQ$ and this proves the first part of the lemma.\\

Assume now that $f$ is reducible over $L$. We know that there exists $\chi \in \widehat{G}$ such that $\chi \circ \theta_{L/\eQ} \circ N_{K/\eQ}$ is trivial.\\

To prove that this character is unique, we show that $K = L^\chi$. Indeed, since $K \subset L$ in this case, we have the following commutative diagram 
\begin{equation*}
\begin{tikzcd}[row sep=huge, column sep = huge]
   C_K \arrow[d, "N_{K/\eQ}"'] \arrow[r, "\theta_{L/K}"] & \mathrm{Gal}(L/ K) \arrow[d, "j"] \\
C_{\eQ} \arrow[r, "\theta_{L/ \eQ}"'] & G\\
  \end{tikzcd}
\end{equation*}where $j$ is now the natural inclusion. Therefore, we have $\chi \circ j \circ \theta_{L/K} = \chi \circ \theta_{L / \eQ} \circ N_{K/\eQ} $ which is trivial by assumption, meaning that \[(j \circ \theta_{L/K}) (C_K) = \mathrm{Gal}(L/K) \subset\ker \chi = \mathrm{Gal}(L/ L^\chi),\]which leads to $ L^\chi \subset K$ via the Galois correspondence. Moreover, since we have a surjective morphism
\[\mathrm{Gal}(K  / \eQ) \longrightarrow \mathrm{Gal}(L^\chi /\eQ)  \]with $\# \mathrm{Gal}(K/\eQ)=  2$, it follows that $[L^\chi : \eQ] \leqslant 2$ and since $\chi$ is non-trivial, we have $K = L^\chi$ as announced.

\end{proof}

Using (\ref{corps de classe}) we identify any non-trivial irreducible representation of $G := \mathrm{Gal} (L/\eQ)$ with a non-trivial Dirichlet character modulo $q_L \in \eN$, where $q_L$ is the conductor of $L/\eQ$. We write 
\[ \widehat{G} = \{ 1 , \chi_1, \dots, \chi_{n-1} \}\]and we define the two functions 
\[ \tag{1.8} \label{psi} \psi_L(k) := ( \chi_1 \ast \cdots \ast \chi_{n-1})(k) \quad \quad (k \in \eN), \]and
\[ \tag{1.9} \label{Psi}\Psi_L(\bm k) := \prod_{\ell = 1}^{n-1} \chi_\ell(k_\ell) \quad \quad \left(\bm k \in \eN^{n-1}\right).\]For any $k \in \eN$, we have

\[ \sum_{\substack{\bm k \in \eN^{n-1} \\ k_1 \cdots k_{n-1} =k}} \Psi_L(\bm k) = \psi_L(k). \] Moreover, \cite[th. 6]{Heilbronn} ensures that, since $G$ is abelian, the Dedekind zeta function of $L/\eQ$ is the product of the $L$-functions $L(s, \chi)$, $\chi \in \widehat{G}$, where $\chi$ is seen as a Dirichlet character. Identifying the coefficients of these Dirichlet series, we have for all $k \in \eN$,  
\[\tag{1.10} \label{r_L convolee} r_L(k) = (\mathds{1} \ast \psi_L)(k) = \sum_{\substack{ \bm k \in \eN^{n-1} \\ k_1 \cdots k_{n-1} \mid k}} \Psi_L(\bm k).  \]

The following result will be useful.

\begin{lemme}\label{TNP}
 Let $F \in \eZ[s,t]$ be an irreducible binary form of degree $2$, and $K := \eQ[x]/(F(x,1))$. There exists a constant $\gamma_K$ depending only on $K$ such that, for $z \geqslant 2$, 

\[ \sum_{ p \leqslant z } {\rho_F^-(p) \over p} = \log_2 z + \gamma_K + O\left( {1 \over \log z} \right). \]If $L/ \eQ$ is an abelian number field, $\psi_L $ is the function defined by $($\ref{psi}$)$ and $F$ is irreducible over~$L$, there exists a constant $a_{F,L}$ such that we have, for $z \geqslant 2$,
\[ \sum_{ p \leqslant  z } {\psi_L(p)  \rho_F^-(p) \over p} = a_{F,L} + O \left( {1 \over \log z} \right). \]If $F$ is reducible over $L$, we have 
\[ \sum_{ p \leqslant  z } {\psi_L(p)  \rho_F^-(p) \over p} = \log_2(z) + O(1) . \]
\end{lemme}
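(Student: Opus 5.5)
Both estimates will come from the prime ideal theorem in suitable number fields, with the prime-by-prime evaluation of $\rho_F^-(p)$ and $\psi_L(p)$ supplied by class field theory (Lemma \ref{réécriture (iii)}).

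\textbf{The sum $\sum_{p \le z} \rho_F^-(p)/p$.} For $p \nmid \mathrm{disc}(F)\cdot a$, where $a$ is the leading coefficient of $F(x,1)$, Dedekind--Kummer gives $\rho_F^-(p)$ as the number of degree-one primes of $\mathcal{O}_K$ above $p$. The finitely many remaining primes contribute a constant. Primes of $K$ of residue degree $2$ satisfy $N\mathfrak{p}=p^2$, so they contribute a convergent sum. Hence
\[ \sum_{p\le z}{\rho_F^-(p)\over p} = \sum_{N\mathfrak{p}\le z}{1\over N\mathfrak{p}} + c + O\Big({1\over z}\Big). \]
The Mertens theorem for $K$, which follows from the prime ideal theorem with error $O(x/(\log x)^A)$ by partial summation, gives $\log_2 z + c_K + O(1/\log z)$. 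This yields $\gamma_K$.

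\textbf{The twisted sum: reduction to Hecke characters.} For $p\nmid q_L$ we have
\[ \psi_L(p)=\sum_{\ell}\chi_\ell(p), \]
since in a Dirichlet convolution at a prime only one factor can equal $p$. Thus
\[ \sum_{p\le z}\psi_L(p)\rho_F^-(p)/p=\sum_{\ell=1}^{n-1}\sum_{p}\chi_\ell(p)\rho_F^-(p)/p. \]
Lifting to $K$ as above, for a degree-one prime $\mathfrak p\mid p$ with $\mathfrak{p}\nmid q_L$ we get $\chi_\ell(p)=\chi_\ell(N_{K/\eQ}\mathfrak p)=\widetilde{\chi_\ell}(\mathfrak p)$. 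Here $\widetilde{\chi_\ell}=\chi_\ell\circ\theta_{L/\eQ}\circ N_{K/\eQ}$ is a Hecke character of $K$ of finite order, and this identification uses the compatibility diagram (\ref{corps de classe}). Up to convergent contributions (ramified primes and residue degree $2$), the sum therefore becomes
\[ \sum_{\ell}\sum_{N\mathfrak p\le z}\widetilde{\chi_\ell}(\mathfrak p)/N\mathfrak p. \]

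\textbf{Case analysis.} By Lemma \ref{réécriture (iii)}, if $F$ is irreducible over $L$ then every $\widetilde{\chi_\ell}$ is non-trivial. The prime ideal theorem for non-trivial Hecke characters (non-vanishing of $L(s,\widetilde\chi)$ on $\sigma=1$, with a de la Vallée Poussin zero-free region) gives $\sum_{N\mathfrak p\le x}\widetilde\chi(\mathfrak p)\log N\mathfrak p \ll x/(\log x)^A$. Partial summation then shows that $\sum_{N\mathfrak p\le z}\widetilde\chi(\mathfrak p)/N\mathfrak p$ converges, with tail $O(1/\log z)$; this gives $a_{F,L}$. If $F$ is reducible over $L$, the lemma gives exactly one $\ell$ with $\widetilde{\chi_\ell}$ trivial. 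That term contributes $\log_2 z+O(1)$ by the first part of the argument, and the other $n-2$ terms are $O(1)$ by the same convergence.

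The main obstacle is the careful identification $\chi_\ell(p)\rho_F^-(p)=\sum_{\mathfrak p\mid p,\ f(\mathfrak p)=1}\widetilde{\chi_\ell}(\mathfrak p)$ for unramified $p$. This requires checking that the Artin map at the idèle of $\mathfrak p$ corresponds, after taking the norm, to Frobenius at $p$, i.e.\ to the value $\chi_\ell(p)$ under the Dirichlet-character identification. Once this is in place, the analytic input is standard.
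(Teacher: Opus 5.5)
Your proposal is correct and is essentially the paper's proof. Both arguments pass to the residue-degree-one primes of $K$, absorbing ramified and inert primes into a constant plus a small error. That error is $O(z^{-1/2})$ rather than your $O(1/z)$, since inert $p$ with $p^2\leqslant z$ are counted, but this is harmless. Both then apply Mertens' theorem for $K$, rewrite the twisted sum as $\sum_{\chi\neq 1}\sum_{N_{K/\eQ}(\mathfrak p)\leqslant z}\widetilde{\chi}(\mathfrak p)/N_{K/\eQ}(\mathfrak p)$ up to such terms, and conclude with the earlier characterisation of irreducibility over $L$ via the $\widetilde{\chi}$ and the prime ideal theorem for $L(s,\widetilde{\chi})$. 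Your explicit remarks that $\psi_L(p)=\sum_\ell\chi_\ell(p)$ and that only residue-degree-one primes contribute to $\rho_F^-(p)$ just make precise steps the paper leaves implicit.
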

\begin{proof} We start by writing that for all but finitely many primes $p$, we have 
    \[ \rho_F^- (p) = \# \{ \mathfrak{p} \in \mathcal{P}_K : \mathfrak{p} \mid p \}.\]Thus, there exists a constant $c_K$ such that
    \[ \sum_{ p \leqslant z } {\rho_F^-(p) \over p} = \sum_{\substack{\mathfrak{p} \in \mathcal{P}_K \\ N_{K/ \eQ}(\mathfrak{p}) \leqslant  z}} {1 \over N_{K/\eQ}(\mathfrak{p})} + c_K + O \left({1 \over z^{1/2}} \right). \]The first part of the lemma follows from the prime number theorem for the Dedekind zeta function of the number field $K$. For the second part of the lemma, we write similarly 
     \[ \sum_{ p \leqslant  z } {\rho_F^-(p) \psi_L(p) \over p} = \sum_{\substack{\chi \in \widehat{G} \\ \chi \neq 1}} \sum_{\substack{\mathfrak{p} \in \mathcal{P}_K \\ N_{K/ \eQ}(\mathfrak{p}) \leqslant z}} {\widetilde{\chi}(\mathfrak{p}) \over N_{K/\eQ}(\mathfrak{p})} + c_{K,L} + O\left( { 1 \over z^{1/2}} \right) \]where $c_{K,L}$ is some constant depending only on $K$ and $L$. By Lemma \ref{réécriture (iii)}, if $F$ is irreducible over $L$ then all the characters $\widetilde{\chi}$ in the above sum are non-trivial. Thus, the prime number theorem for the $L$-functions $L(s,\widetilde{\chi})$ ensures that for each $\chi \neq 1$, we have some constant $a_\chi$ such that 
     \[ \sum_{\substack{\mathfrak{p} \in \mathcal{P}_K \\ N_{K/ \eQ}(\mathfrak{p}) \leqslant z}} {\widetilde{\chi}(\mathfrak{p}) \over N_{K/\eQ}(\mathfrak{p})} = a_\chi + O \left( {1 \over \log z} \right)\]and the result follows in the case where $F$ is irreducible over $L$. Otherwise, Lemma \ref{réécriture (iii)} ensures that exactly one of the non-trivial characters $\chi$ is such that $\widetilde{\chi}$ is trivial. In this case, the prime number theorem for the Dedekind zeta function on $K$ and for the $L$-functions $L(s,\widetilde{\chi})$ provides the result as above.
\end{proof}

\section{Lower bound I : tools from analytic number theory}\label{section3}

For $L/ \eQ$ and $F \in \eZ[s,t]$ as in Theorem \ref{th}, we let $q_L$ be the conductor of the extension $L/ \eQ$ and $K:=\eQ[x]/(F(x,1))$. We introduce the constant 
\[ \label{def b} \tag{2.1} b_F :=  \sup_{(s,t) \in [-1,1]^2} |F(s,t)|.\]Hensel's lemma ensures that for all $p \nmid \mathrm{disc}(F) F(0,1)$ and $\nu \geqslant 1$, we have $\rho_F^-(p^\nu)~=~\rho_F^-(p)$. We thus let
   \[ \tag{2.2} \label{choice W} W := \prod_{p \leqslant w_0} p^{\max(1, v_p(q_L))}\]with $w_0$ large enough so that $q_L \mid W$ and $p > w_0$ implies $p \nmid \mathrm{disc}(F) F(0,1)$. For $w \in \eN$, we let
     \[ \label{G_{F,L}} \tag{2.3} G_{F,L}(s,w) := \prod_{p > w} \left( 1 + \sum_{\nu \geqslant 1}   { \psi_L(p^\nu) \rho_F^-(p^\nu) \over p^{\nu s}} \right).\]Finally, if $\chi \in \widehat{G}$ we let
     \[ L(s,\widetilde{\chi}) = \prod_{\mathfrak{p} \in \mathcal{P}_{K}} \left( 1 - {\widetilde{\chi} (\mathfrak{p}) \over N_{K / \eQ} (\mathfrak{p})^s} \right)^{-1} \quad \quad (\sigma > 1),\]where $\widetilde{\chi} = \chi  \circ \theta_{L/ \eQ} \circ  N_{K / \eQ} $. We now prove several technical lemmas.

 \begin{lemme}\label{convexity bound}
   Let $L/\eQ$ be an abelian extension of degree $n \geqslant 2$ and $F \in \eZ[s,t]$ be a binary quadratic form which is irreducible over $\eQ$. If $w$ is large enough, we have $G_{F,L}(1,w) \neq 0$ and the following holds. The function $G_{F,L}(s,w)$ admits a holomorphic continuation in the region $\sigma > \tfrac12$, and there exists a function $H$ holomorphic in the region $\sigma > {1 \over 2} $ such that $H(s) \asymp_\varepsilon 1$ whenever $\sigma > {1 \over 2} + \varepsilon$ and 
   \[ G_{F,L}(s,w)  = H(s) \prod_{\chi \neq 1} L(s, \widetilde{\chi } )\quad \quad \left( \tfrac12 < \sigma \leqslant 1, \; \tau \in \eR \right). \] In particular, if $F$ is irreducible over $L$, then for all $\varepsilon > 0$,
     \[ \label{conv bound}\tag{$2.4$} G_{F,L}(s,w) \ll_\varepsilon (1 + |\tau |)^{(n-1)(1 - \sigma) + \varepsilon} \quad \quad \left( \tfrac12 < \sigma \leqslant 1, \; \tau \in \eR \right). \]
 \end{lemme}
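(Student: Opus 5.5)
The plan is to compare Euler factors at primes $p > w$ with $w \geqslant w_0$, so that $p \nmid q_L\,\mathrm{disc}(F)F(0,1)$. For such $p$ Hensel gives $\rho_F^-(p^\nu) = \rho_F^-(p) = \#\{\mathfrak{p} \in \mathcal{P}_K : \mathfrak{p} \mid p,\ \deg \mathfrak{p} = 1\}$. Concretely, $\rho_F^-(p)=2$ if $p$ splits in $K$, $\rho_F^-(p)=0$ if $p$ is inert, and ramification is excluded for $p>w$.

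\textbf{Step 1: local factor of $G_{F,L}$.} Write $\psi_L = \chi_1 \ast \cdots \ast \chi_{n-1}$. Since $\rho_F^-(p^\nu)$ is constant in $\nu$, the local factor of $G_{F,L}(s,w)$ at a split prime is
\[ 1 + 2\sum_{\nu \geqslant 1} \psi_L(p^\nu) p^{-\nu s} = 2\prod_{\chi \neq 1}(1-\chi(p)p^{-s})^{-1} - 1 , \]
and it equals $1$ at an inert prime.

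\textbf{Step 2: local factor of $\prod_{\chi\neq1} L(s,\widetilde\chi)$.} I compute it at the primes $\mathfrak{p} \mid p$.
\begin{itemize}
\item If $p$ splits, there are two primes $\mathfrak{p}$ of norm $p$, and $\widetilde{\chi}(\mathfrak{p}) = \chi(p)$ because $N_{K/\eQ}$ maps the idele at $\mathfrak{p}$ to the idele at $p$. The factor is therefore $\prod_{\chi\neq 1}(1-\chi(p)p^{-s})^{-2}$.
\item If $p$ is inert, $\mathfrak{p}$ has norm $p^2$ and $\widetilde\chi(\mathfrak{p}) = \chi(p)^2$, so the factor is $\prod_{\chi \neq 1}(1-\chi(p)^2p^{-2s})^{-1}$.
\end{itemize}

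\textbf{Step 3: the quotient $H$.} Define $H(s) := G_{F,L}(s,w)\prod_{\chi\neq1}L(s,\widetilde\chi)^{-1}$ as an Euler product.
\begin{itemize}
\item For $p\leqslant w$ (and ramified primes), the factor of $H$ is a finite product of $(1-\widetilde\chi(\mathfrak{p})N\mathfrak{p}^{-s})$. It is entire, bounded for $\sigma>1/2$, and nonvanishing there since $|N\mathfrak{p}^{-s}|<1$.
\item For split $p>w$, put $a = \sum_{\chi\neq1}\chi(p)$ and $X=p^{-s}$. Expanding, the factor is $(1+2aX+O(X^2))(1-2aX+O(X^2)) = 1+O_n(p^{-2\sigma})$, with implied constant depending only on $n$.
\item For inert $p$, the factor is $1+O(p^{-2\sigma})$ directly.
\end{itemize}
Hence the product over $p>w$ converges absolutely and uniformly in $\sigma \geqslant 1/2+\varepsilon$, giving $H(s) \asymp_\varepsilon 1$ there. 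The lower bound needs every factor to be nonzero. This is where $w$ large enters: for $p>w$ large in terms of $n$ and $\varepsilon$, each factor is $1+O(p^{-1-2\varepsilon})$, bounded away from $0$. The finitely many remaining factors with $p \leqslant w$ are nonzero, as noted above. The holomorphic continuation of $G_{F,L}$ to $\sigma>1/2$ then follows from that of $H$ and of the Hecke $L$-functions. The latter are entire when $\widetilde\chi$ is nontrivial; when $F$ is reducible over $L$, Lemma \ref{réécriture (iii)} gives exactly one trivial $\widetilde\chi$, whose Dedekind zeta factor has a pole at $s=1$. In that case continuation holds away from $s=1$, and the stated identity should be read that way.

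\textbf{Nonvanishing at $s=1$ and the convexity bound.} For $G_{F,L}(1,w)\neq 0$ I use the Euler product directly. Each factor at $p>w$ is $1+O(1/p)$, which is nonzero for $p$ large, and the product converges conditionally by Lemma \ref{TNP}. This is cleaner via the factorisation: $H(1)\neq0$, and in the irreducible case $L(1,\widetilde\chi)\neq0$ for nontrivial Hecke characters. For (2.4), when $F$ is irreducible over $L$ all $\widetilde\chi$ are nontrivial, by Lemma \ref{réécriture (iii)}. The Phragmén–Lindelöf convexity bound for Hecke $L$-functions over the quadratic field $K$ gives $L(s,\widetilde\chi)\ll_\varepsilon (1+|\tau|)^{(1-\sigma)+\varepsilon}$ for $1/2<\sigma\leqslant1$, as the analytic conductor grows like $|\tau|^{[K:\eQ]}=|\tau|^2$. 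Multiplying over the $n-1$ characters and using $H\ll 1$ yields (2.4).

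I expect the main obstacle to be Step 2 at inert primes: checking that $\widetilde\chi(\mathfrak{p})=\chi(p)^2$, i.e. that the local norm of a uniformiser at an inert prime is $p^2$ up to units. The mismatch with $G_{F,L}$ there must be $O(p^{-2\sigma})$, which holds since $G_{F,L}$ has trivial inert factors.
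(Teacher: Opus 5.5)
Your proof is correct and follows essentially the same route as the paper. You compare the Euler factors of $G_{F,L}(s,w)$ and $\prod_{\chi\neq 1}L(s,\widetilde\chi)$ at $p>w$, show that the quotient $H$ is an absolutely convergent product of factors $1+O(p^{-2\sigma})$, and then apply the convexity bound to the nontrivial $\widetilde\chi$. Where the paper only matches linear terms, you compute the local factors exactly via the split/inert dichotomy, and you are more explicit than the paper about $L(1,\widetilde\chi)\neq 0$ and about the pole of $\zeta_K$ when $F$ is reducible over $L$. One small point: $w$ need only be large in terms of $n$, not $\varepsilon$, since your factors are $1+O_n(p^{-1})$ uniformly for $\sigma>\tfrac12$.
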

 \begin{proof}
     For $\sigma > 1$, for $p > w$ and $\mathfrak{p} \in \mathcal{P}_{K}$ such that $\mathfrak{p} \mid p$, we use the inequality $\psi_L(p^\nu) \rho_F^-(p^\nu) \ll_\varepsilon~p^{\nu \varepsilon}$ to write that the product
     \[  \left( 1 + \sum_{\nu \geqslant 1} { \psi_L(p^\nu)\rho_F^-(p^\nu) \over p^{\nu s}} \right) \prod_{\mathfrak{p} \mid p} \prod_{\substack{\chi \in \widehat{G} \\ \chi \neq 1}}\left( 1 - {\widetilde{\chi} (\mathfrak{p}) \over N_{K / \eQ} (\mathfrak{p})^s} \right)\]is equal to 
     \[ 1 + {\psi_L(p) \rho_F^-(p) \over p^s} - {1 \over p^s}\sum_{\substack{\mathfrak{p} \mid p \\ N_{K/ \eQ}(\mathfrak{p}) = p }} \sum_{\substack{\chi \in \widehat{G} \\ \chi \neq 1}} \widetilde{\chi} (\mathfrak{p}) + O_\varepsilon \left( {1 \over p^{2\sigma- \varepsilon }} \right).\]Since the ideals $\mathfrak{p} $ dividing $p$ correspond to the linear factors of $F(x,1) \bmod p$, it follows that 
     \[{1 \over p^s}\sum_{\substack{\mathfrak{p} \mid p \\ N_{K/ \eQ}(\mathfrak{p}) = p }} \sum_{\substack{\chi \in \widehat{G} \\ \chi \neq 1}} \widetilde{\chi} (\mathfrak{p}) =  {\psi_L(p) \rho_F^-(p) \over p^s}, \]hence
     \[\left( 1 + \sum_{\nu \geqslant 1}  { \psi_L(p^\nu)\rho_F^-(p^\nu) \over p^{\nu s}} \right) \prod_{\mathfrak{p} \mid p} \prod_{\substack{\chi \in \widehat{G} \\ \chi \neq 1}}\left( 1 - {\widetilde{\chi} (\mathfrak{p}) \over N_{K / \eQ} (\mathfrak{p})^s} \right)= 1 + O_\varepsilon\left( {1 \over p^{2\sigma- \varepsilon}} \right).\]This equality reveals that $G_{F,L}(1,w) \neq 0$ and $H(s) := G_{F,L}(s,w) \displaystyle\prod_{\substack{\chi \in \widehat{G} \\ \chi \neq 1}} L( s , \widetilde{\chi})^{-1} $ is holomorphic in the region $\sigma > {1 \over 2}$ and satisfies $H(s) \asymp_\varepsilon 1$ whenever $\sigma > {1 \over 2} + \varepsilon$. The conclusion follows, and in particular (\ref{conv bound}) is deduced from the convexity bound \cite[Eq. (5.20)]{IK} for $L(s, \widetilde{\chi})$ in the case $\chi \neq 1$, which can be applied since Lemma \ref{réécriture (iii)} ensures that each $\widetilde{\chi}$ is non-trivial when $F$ is irreducible over $L$.
 \end{proof}

 \begin{rmk}
     The same result holds if we replace $\psi_L$ by any non-trivial $\chi \in \widehat{G}$, still identified with a non-trivial Dirichlet character.
 \end{rmk}

Before stating other technical lemmas that will be required in the following sections, let us introduce a set of multiplicative functions

\[\label{group U} \tag{2.5} \mathcal{U} = \left\{  u : \eN \to \eR_{>0} : \forall k \in \eN, \; u(k) = \prod_{p \mid k } (1 + h(p)) \text{ where } h(p) \ll_u {1 \over p} \right\}. \]The set $\mathcal{U}$ is a group under point-wise multiplication with identity given by the constant application that is equal to $1$. Any $u \in \mathcal{U}$ satisfies $u(k) \leqslant 2^{\omega(k)}$ $(k \in \eN)$ so for all $\varepsilon > 0 $, we have the estimate
\[ \tag{2.6} \label{estim U}u(k) \ll_\varepsilon k^\varepsilon. \]For $u \in \mathcal{U}$ and $\ell \in \eN$, we recall that $u(k,\ell)$ denotes the quantity 
\[ u(k,\ell) = \prod_{\substack{p \mid k \\ p \nmid \ell}} u(p).\]Note that $u(\cdot, \ell) \in \mathcal{U}$ for any $\ell \in \eN$.\\

For $L,F$ as above, $W$ as in (\ref{choice W}), and $h : \eN \to \eR_{>0}$ any multiplicative function bounded by~$\tau$, we let \[ \label{c_L(h)} \tag{2.7} c_L(h) : = \prod_{p\nmid W} \left(1 +  \sum_{\nu \geqslant 1} { h(p^\nu) \psi_L(p^\nu) \over p^{\nu}} \right) \]and 
    \[\label{uh} \tag{2.8} u_{L}(h)(k)  : = \prod_{\substack{p \mid k \\ p \nmid W}} \left(1 +  \sum_{\nu \geqslant 1} {h(p^\nu) \psi_L(p^\nu) \over p^{\nu}} \right)^{-1}.\]Enlarging $w_0$ (see (\ref{choice W})) if necessary, we have $u_h \in \mathcal{U}$ (this uses that $h$ is bounded by $\tau$). For $h = \rho_F^- v$ with $v \in \mathcal{U}$, we write $u_{F,L}(v)$ for $u_{L}(\rho_F^- v)$ and $c_{F,L}(v)$ for $c_L(\rho_F^- v)$. Thus, we have
    \[ \label{c_{F,L}(v)} \tag{2.9} c_{F,L}(v) : = \prod_{p\nmid W} \left(1 +  \sum_{\nu \geqslant 1} { v(p^\nu) \psi_L(p^\nu) \rho_F^- (p^\nu) \over p^{\nu}} \right),  \]and
    \[\label{u_{F,L}(v)} \tag{2.10} u_{F,L}(v)(k) := \prod_{\substack{p \mid k \\ p \nmid W}} \left(1 +  \sum_{\nu \geqslant 1} {v(p^\nu) \psi_L(p^\nu) \rho_F^-(p^\nu) \over p^{\nu}} \right)^{-1}. \]

\begin{lemme}\label{NT sum : main term}
 Let $L/\eQ$ be an abelian number field of degree $n \geqslant 2$, $F \in \eZ[s,t]$ be a binary quadratic form which is irreducible over $L$, let $W$ and $w_0$ be as in $($\ref{choice W}$)$, and let $h~:~\eN~\to~\eR$ be any multiplicative function such that \begin{enumerate}
     \item[(i)] for all $\varepsilon > 0$, for all $k \in \eN$, $h(k) \ll_\varepsilon k^\varepsilon$;
     \item[(ii)] for all prime $p$, $|h(p) - \rho_F^-(p)| \ll {1 \over p}$.
 \end{enumerate}
Let $\varepsilon \in \left( 0, {1 \over 4n} \right)$. Enlarging $w_0$ if necessary, for any $m$ coprime to $W$ and $y > 0$, we have
   \[ \sum_{\substack{k \leqslant y \\ \gcd(k,mW) = 1} }  {\psi_L(k)h(k) \over k} = c_L(h) u_L(h)(m)  + O_\varepsilon\left( m^\varepsilon y^{ \varepsilon - {1 \over 4n}}\right), \]where $c_L(h) > 0$ and $u_L(h) \in \mathcal{U}$ are as in (\ref{c_L(h)}) and (\ref{uh}).
\end{lemme}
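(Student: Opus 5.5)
We will use Perron's formula with the Dirichlet series
\[ D_m(s) := \sum_{\substack{k \geqslant 1 \\ \gcd(k,mW)=1}} {\psi_L(k) h(k) \over k^{s}} = \prod_{p \nmid mW} \Big(1 + \sum_{\nu\geqslant 1} {\psi_L(p^\nu)h(p^\nu) \over p^{\nu s}}\Big), \]
which is absolutely convergent for $\sigma > 1$ by hypothesis (i). The sum in the statement is $\frac{1}{2\pi i}\int D_m(1+s) y^s \frac{ds}{s}$. Evaluating at $s=0$ formally, $D_m(1)$ equals $c_L(h)$ with the Euler factors at $p \mid m$ removed, that is $c_L(h)u_L(h)(m)$ by (\ref{c_L(h)}) and (\ref{uh}). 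So the plan is to show that $D_m(1+s)$ extends holomorphically to $\sigma > -\tfrac12+\varepsilon$ with polynomial growth in $\tau$, shift the contour past $0$, and collect the residue there.

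\textbf{Continuation.} Write $D_m(s) = G_{F,L}(s,w_0)\, E(s)\, P_m(s)$. Here $P_m(s)$ is the finite product over $p\mid m$ of the inverse Euler factors of $D$. The factor $E(s)$ compares $h$ to $\rho_F^-$ at primes $p \nmid W$: by (ii), $|h(p)-\rho_F^-(p)| \ll 1/p$ and $|\psi_L(p)| \leqslant n-1$, so the local ratio is $1 + O(p^{-1-\sigma} + p^{-2\sigma+\varepsilon})$. Hence $E(s)$ is holomorphic and $\asymp 1$ for $\sigma > \tfrac12+\varepsilon$. By Lemma \ref{convexity bound}, and since $F$ is irreducible over $L$, $G_{F,L}(s,w_0) = H(s)\prod_{\chi\neq 1} L(s,\widetilde{\chi})$ is holomorphic for $\sigma>\tfrac12$, with the convexity bound (\ref{conv bound}). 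Each local factor of $P_m$ is $1+O(p^{-\sigma+\varepsilon})$ with $p \nmid W$. Enlarging $w_0$ keeps these factors away from $0$, so $|P_m(s)| \leqslant \prod_{p\mid m}(1+Cp^{-\sigma+\varepsilon}) \ll_\varepsilon m^{\varepsilon}$ uniformly for $\sigma \geqslant \tfrac12 + \varepsilon$ (since $\omega(m)$ is small compared to $\log m$). The same enlargement ensures $c_L(h)>0$ and $u_L(h)\in\mathcal U$. Note that $c_L(h)$ converges, possibly only conditionally, precisely because $\sum_p \psi_L(p)\rho_F^-(p)/p$ converges by Lemma \ref{TNP}. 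The identity $D_m(1)=c_L(h)u_L(h)(m)$ then holds by continuity, or by Abel summation on the Euler product.

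\textbf{Contour shift.} Apply a truncated Perron formula at height $T$ on the line $\sigma = 1/\log y$ (in the shifted variable). By (i) the truncation error is $O(y^{\varepsilon}/T)$. Then move the contour to $\Re s = -\tfrac12+\varepsilon$, picking up the simple pole at $s=0$ with residue $D_m(1)$. On the new line, (\ref{conv bound}) gives $D_m(1+s) \ll m^\varepsilon (1+|\tau|)^{(n-1)/2+\varepsilon}$. The vertical integral is therefore $\ll m^\varepsilon y^{-1/2+\varepsilon} T^{(n-1)/2+\varepsilon}$. The horizontal segments are bounded by interpolating the same estimate, giving $\ll m^\varepsilon T^{(n-1)/2+\varepsilon}(y^{-1/2}+T^{-1})$-type terms. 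Choosing $T = y^{1/n}$ balances the terms: $y^{-1/n}$ from the truncation against $y^{-1/2 + (n-1)/(2n)} = y^{-1/(2n)}$ from the vertical line. The total error is $O(m^\varepsilon y^{\varepsilon-1/(2n)})$, which is stronger than the claimed $y^{\varepsilon - 1/(4n)}$. The extra slack absorbs any loss in the Perron truncation, for example from using $h(k)\ll k^\varepsilon$ crudely. For $y<1$ the sum is empty and the error term dominates trivially.

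\textbf{Main obstacle.} The delicate point is uniformity in $m$ of the bound for $P_m(s)$ near $\sigma=\tfrac12$, together with the nonvanishing of its factors. This is exactly why $w_0$ must be enlarged: for $p > w_0$, every local factor has modulus at least $\tfrac12$. A secondary issue is controlling the non-absolutely-convergent product defining $c_L(h)$. I would handle it by identifying $D_m(1)$ with the analytic value $H(1)E(1)\prod_{\chi \neq 1}L(1,\widetilde{\chi})P_m(1)$. By the prime number theorem for these $L$-functions (as in Lemma \ref{TNP}), this value coincides with the ordered product over $p$, and it is positive as a limit of partial products that are positive once $w_0$ is large.
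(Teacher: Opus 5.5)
Your proposal is correct and is essentially the paper's argument: the same factorisation of $D_g$ through $G_{F,L}(s,w_0)$ (your $E$ and $P_m$ play the role of the paper's $\Phi_m$ and of the Euler factors at $p\mid m$), the same convexity bound, and a Perron contour moved to $\sigma=\tfrac12+\varepsilon$. The only variation is that you apply Perron to $\sum_{k\le y}g(k)/k$ and collect $D_g(1)$ as the residue at $s=0$, whereas the paper proves $\sum_{k\le y}g(k)\ll m^\varepsilon y^{1-1/(4n)+\varepsilon}$ (no pole is crossed) and recovers the tail $\sum_{k>y}g(k)/k$ by partial summation.

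There is one slip to fix on the horizontal segments. There the bound should be $m^\varepsilon T^{-1+\varepsilon}\bigl(T^{(n-1)/2}y^{-1/2}+1\bigr)$, because near $\Re s=0$ (i.e.\ $\sigma=1$) the series is only $\ll m^\varepsilon T^{\varepsilon}$. Your written form $T^{(n-1)/2+\varepsilon}(y^{-1/2}+T^{-1})$ contains a term $T^{(n-3)/2}$, which grows for $n\geqslant 4$. With the corrected bound, your choice $T=y^{1/n}$ does give the claimed $y^{-1/(2n)+\varepsilon}$.
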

\begin{proof}

    Let $W$ and $w_0$ be as in (\ref{choice W}) with $w_0$ large enough to ensure that Lemma \ref{convexity bound} applies. We denote by $f$ and $g$ the multiplicative functions defined by
    \[ f(k) = \psi_L(k) \rho_F^-(k),\]and
    \[ g(k) = \psi_L(k) h(k) \mathds{1}_{\gcd(k,mW) = 1}\]for $k \in \eN$. By $(i)$, we have $ \max(|f(k) |,|g(k)|)  \ll_\varepsilon k^\varepsilon$ for $k \geqslant 1$ and $\varepsilon > 0$. If $p \nmid mW$ and $s \in \eC$ with $\sigma > 1$, it follows that for $\varepsilon > 0$, we have
    \begin{align*} \left( 1 + \sum_{\nu \geqslant 1}  {g(p^\nu) \over p^{\nu s}}  \right) \!\!\left( 1 + \sum_{\nu \geqslant 1}  {f(p^\nu)  \over p^{\nu s}} \right)^{-1} \!\!\!\!\!\!\!\! = & \; \left( 1 + {g(p) \over p^s} + O_\varepsilon \left( {1 \over p^{2\sigma - \varepsilon}} \right) \right) \left( 1 + {f(p) \over p^s } + O_\varepsilon \left( {1 \over p^{2\sigma - \varepsilon}} \right)  \right)^{-1}.   
    \end{align*} If $p > w_0$, we know that $\rho_F^- (p) \leqslant \deg F=2$ and $|\psi_L(p)| \leqslant n-1$. In order to approximate the last factor, we need to ensure that $|f(p)/ p^s| < 1 $, which is possible on enlarging $w_0$ if necessary in order to have $w_0 > 2 (n-1) $. Therefore, for $p \nmid mW$, we have
    \[ \left( 1 + \sum_{\nu \geqslant 1}  {g(p^\nu) \over p^{\nu s}}  \right) \!\!\left( 1 + \sum_{\nu \geqslant 1}  {f(p^\nu)  \over p^{\nu s}} \right)^{-1} \!\!\!\! = 1 +  {g(p)- f(p) \over p^{s} } + O_\varepsilon \left({1 \over p^{2 \sigma - \varepsilon}  }\right),\]and $(ii)$ yields
    \[ \left( 1 + \sum_{\nu \geqslant 1}{g(p^\nu) \over p^{\nu s}}  \right) \left( 1 + \sum_{\nu \geqslant 1}  {f(p^\nu)  \over p^{\nu s}} \right)^{-1} \!\!\!\!\!\!= 1 + O_\varepsilon\left({ 1 \over p^{\min(2\sigma-\varepsilon,1+\sigma)} }\right).\]In particular, the product
    \[ \Phi_m(s) := \prod_{p \nmid mW} \left( 1+ \sum_{\nu \geqslant 1} {g(p^\nu) \over p^{\nu s} }\right) \left(  1+\sum_{\nu \geqslant 1} {f(p^\nu) \over p^{\nu s} } \right)^{-1}\]converges absolutely in the region $\sigma > 1/2$ and has no zero in this region. By analytic continuation we may thus write that whenever $\sigma > 1/2$, we have
    \[ D_g(s) =   \Phi_m(s) G_{F,L}(s,w_0)\prod_{p \mid  m}\left( 1 + \sum_{\nu \geqslant 1} {f(p^\nu ) \over p^{\nu s} } \right), \]where $G_{F,L}(s,w_0)$ is as in (\ref{G_{F,L}}). Note that $G_{F,L}(s,w_0)$ is convergent for $\sigma > {1 \over 2}$ by Lemma \ref{convexity bound}. We used the assumption $\gcd(m,W) = 1$ to rearrange the products. We also have 
    \[ \Phi_m(s) \prod_{p \mid m} \left(1 + \sum_{\nu \geqslant 1} {f(p^\nu) \over p^{\nu s}} \right)  \ll_\varepsilon m^\varepsilon.\] By Lemma \ref{convexity bound}, since $F$ is irreducible over $L$, we deduce the bound 
    \[ D_g(s) \ll_\varepsilon  m^\varepsilon ( 1 + |\tau |)^{(n-1)(1- \sigma)  + \varepsilon } \quad \quad \left( \tfrac12 < \sigma < 1, \; \tau \in \eR \right).\]Taking $w_0$ large enough ensures that $c(h) > 0$ and $u_h \in \mathcal{U}$. It follows that we have the equality $D_g(1) = \Phi_1(1) G_{F,L}(1,w_0) = c_L(h) u_L(h)(m)$, so that
    \[ \sum_{k \leqslant y} {g(k) \over k}  = c_L(h) u_L(h)(m) - \sum_{k > y} {g(k) \over k}. \]A partial summation provides for $z > y$
    \[ \sum_{y < k \leqslant z}  {g (k) \over k} = {1 \over z} \sum_{k \leqslant z} g(k) - {1 \over y} \sum_{k \leqslant y} g(k) + \int_y^z  \left( \sum_{k \leqslant t} g(k) \right) {\mathrm{d}t \over t^2}. \]We will now prove that 
    \[ \sum_{\substack{k \leqslant y  }} g(k) \ll_\varepsilon m^\varepsilon y^{1 - {1 \over 4n} + \varepsilon},\]which implies the estimate
    \[ \sum_{k > y}  {g (k) \over k} = - {1 \over y} \sum_{k \leqslant y} g(k) + \int_y^{+ \infty}  \left( \sum_{k \leqslant t} g(k) \right) {\mathrm{d}t \over t^2} \ll_\varepsilon m^\varepsilon y^{- {1 \over 4n} +\varepsilon }  \]from which the result follows.
    We let $y \geqslant 1$ be a half integer. The Perron formula \cite[cor. 5.3]{MV} with $\sigma_0 = 1 + 1/ \log y$ and $T = y^{1 \over 2n}$ yields 
    \[ \sum_{\substack{k \leqslant y  }} g(k)  = {1 \over 2i \pi} \int_{\sigma_0 - iT}^{\sigma_0 + iT} D_g(s) {y^s \over s} \mathrm{d}s + O_\varepsilon \left( {y^{1 + \varepsilon} \over T } \right).\]Since $D_g(s)$ has no poles in the rectangle enclosed by $\sigma_0 \pm iT$ and $1/2 + \varepsilon \pm iT$, the residue theorem enables us to reduce the problem to bounding the integrals 
    \[ J_1 = \int_{-T}^T D_g( 1/2 + \varepsilon+it){ y^{1/2 + \varepsilon + it} \over 1/2 + \varepsilon + it} \mathrm{d}t,\]
    \[ J_2 = \int_{1/2 + \varepsilon }^{\sigma_0} D_g( \sigma +iT){ y^{\sigma + iT} \over \sigma + iT} \mathrm{d}\sigma,\]and 
    \[ J_3 = \int_{\sigma_0}^{1/2 + \varepsilon } D_g( \sigma -iT){ y^{\sigma - iT} \over \sigma -iT} \mathrm{d}\sigma.\] Using bound (\ref{conv bound}), we have 
    \[ J_1 \ll m^\varepsilon y^{1/2 + \varepsilon }  \int_{1}^T (1 + t)^{ {n-1 \over 2} + n\varepsilon} \mathrm{d}t \ll_\varepsilon m^\varepsilon y^{1/2 +  \varepsilon}T^{{n+1 \over 2} +n\varepsilon} = m^\varepsilon y^{ {3\over 4} + {1 \over 4n} + \varepsilon } \ll_\varepsilon m^\varepsilon y^{ 1 - {1 \over 4n} + \varepsilon }, \]where we used $ n \geqslant 2$. Moreover, the integrals $J_2$ and $J_3$ are bounded by 
    \[m^\varepsilon \int_{1/2 + \varepsilon }^{\sigma_0} T^{(n-1) (1 - \sigma) + \varepsilon}{ y^{\sigma } \over |\sigma + iT| } \mathrm{d}\sigma   \ll_\varepsilon m^\varepsilon  T^{-1} \int_{1/2 + \varepsilon }^{\sigma_0} y^{\sigma + \left( {1 \over 2} - {1 \over 2n} \right) (1 - \sigma) + {\varepsilon \over 2n} }  \mathrm{d}\sigma.\]Since we have
    \begin{align*}  \int_{1/2 + \varepsilon }^{\sigma_0} y^{\sigma + \left( {1 \over 2} - {1 \over 2n} \right) (1 - \sigma) + {\varepsilon \over 2n} }  \mathrm{d}\sigma & =  y^{{1 \over 2} - {1 \over 2n} + {\varepsilon \over 2n}} \log(y)^{-1}  \left(y^{ {\sigma_0 \over 2} \left( 1 + {1 \over n} \right) } - y^{ \left({1 + 2 \varepsilon \over 4}\right) \left(1 + {1 \over n}  \right) }   \right) \\
    & \ll_\varepsilon  y^{1+\varepsilon}, \end{align*}we get that $J_2$ and $J_3$ are $\ll_\varepsilon m^\varepsilon y^{1 - {1 \over 4n} + \varepsilon}$, hence the conclusion when $y \geqslant 1$. The claim remains true when $y \in (0,1)$ because of the estimate $c_L(h) u_L(h)(m) \ll m^\varepsilon$.
    
\end{proof}

For $W$ as in (\ref{choice W}), $a \in \eN$ and $u \in \mathcal{U}$, we consider $\rho_{F,a}^-(\cdot \; ; u)$ the multiplicative function defined by 
\[\label{def func}\tag{2.11} \rho_{F,a}^-(k;u)  :=  \rho_F^- \left( k,a \right)  u(k,a)  \mathds{1}_{\gcd(k,W) = 1}. \]Furthermore, we introduce
\[\label{vartheta} \tag{2.12} \sigma_k(a) := \sum_{\substack{ \ell \mid (ak)^\infty}} {\psi_L(\ell) \rho_{F,k_1a}^-(\ell;u) \over \ell}  \gcd(k_1\ell,a)     .\]

We introduce 
\[ \tag{2.13} \label{fracS}\mathfrak{S} (y,a,k_1;u):= \sum_{\substack{k \leqslant y \\ \gcd(k,W)= 1 }} {\psi_L(k)  \rho_{F,k_1a}^-(k;u) \over k} \gcd(k_1k,a).\]

\begin{cor}\label{lemme central general}
    Let $L/\eQ$ be an abelian number field of degree $n \geqslant 2$, $F \in \eZ[s,t]$ be a binary quadratic form which is irreducible over $L$, let $W$ and $w_0$ be as in $($\ref{choice W}$)$ and $v \in \mathcal{U}$. Let $\varepsilon \in \left( 0, {1 \over 4n} \right)$. Enlarging $w_0$ if necessary, for all $k_1, a \in \eN$ with $\gcd(k_1a,W) =~1$, and for $y > 0$, we have
        \[ \mathfrak{S}(y,a,k_1;v)  = c_{L,F}( v) u_{L,F}(v)(ak_1) \sigma_{k_1}(a)  + O_\varepsilon \left( a^{1 + \varepsilon} k_1^\varepsilon  y^{ -{1 \over 4n} + \varepsilon} \right),\]where $c_{L,F}(v) > 0$ and $u_{L,F}(v) \in \mathcal{U}$ are as in (\ref{c_{F,L}(v)}) and (\ref{u_{F,L}(v)}).
        
\end{cor}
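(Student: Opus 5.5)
The plan is to factor each $k$ in the sum defining $\mathfrak{S}(y,a,k_1;v)$ as $k = \ell m$ with $\ell \mid (ak_1)^\infty$ and $\gcd(m,ak_1)=1$. This factorisation is unique. The functions $\psi_L$ and $\rho_{F,k_1a}^-(\cdot\,;v)$ are multiplicative, so the summand splits accordingly. We also have $\gcd(k_1k,a) = \gcd(k_1\ell,a)$ since $\gcd(m,a)=1$. For $m$ coprime to $ak_1$ we have $\rho_{F,k_1a}^-(m;v) = \rho_F^-(m)v(m)$, with the condition $\gcd(m,W)=1$ retained. This gives
\[ \mathfrak{S}(y,a,k_1;v) = \sum_{\substack{\ell \mid (ak_1)^\infty \\ \ell \leqslant y}} {\psi_L(\ell)\rho_{F,k_1a}^-(\ell;v) \over \ell}\gcd(k_1\ell,a) \sum_{\substack{m \leqslant y/\ell \\ \gcd(m, ak_1W)=1}} {\psi_L(m)\rho_F^-(m)v(m) \over m}. \]

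The inner sum is handled by Lemma \ref{NT sum : main term}, applied with $h = \rho_F^- v$ and with $ak_1$ (coprime to $W$) in place of $m$. Hypothesis (i) holds by (\ref{estim U}) together with $\rho_F^-(k) \ll_\varepsilon k^\varepsilon$. Hypothesis (ii) holds because $v(p) = 1 + O(1/p)$ and $\rho_F^-(p) \leqslant 2$ for $p > w_0$. The inner sum therefore equals $c_{F,L}(v)u_{F,L}(v)(ak_1) + O_\varepsilon\bigl((ak_1)^\varepsilon (y/\ell)^{\varepsilon - 1/(4n)}\bigr)$.

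Next, the sum over $\ell \leqslant y$ is completed to all $\ell \mid (ak_1)^\infty$, which produces the factor $\sigma_{k_1}(a)$ from (\ref{vartheta}). Note that the $\ell$ in (\ref{vartheta}) ranges over $\ell \mid (ak_1)^\infty$ — I am reading the index there as consistent with this. Two error contributions remain:
\[ \sum_{\ell \mid (ak_1)^\infty} {|\psi_L(\ell)|\rho_{F,k_1a}^-(\ell;v) \over \ell}\gcd(k_1\ell,a)\,(ak_1)^\varepsilon (y/\ell)^{\varepsilon-1/(4n)} \]
and the tail $\sum_{\ell > y}$ multiplied by the bounded main constant. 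The tail is handled by Rankin's trick: it is at most $y^{-\delta}$ times the same sum with weight $\ell^{\delta}$, where $\delta = 1/(4n)-\varepsilon$. The first error term has exactly the same shape. Both therefore reduce to bounding
\[ \Sigma := \sum_{\ell \mid (ak_1)^\infty} {|\psi_L(\ell)|\rho_{F,k_1a}^-(\ell;v)\gcd(k_1\ell,a) \over \ell^{1-\delta}}. \]

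The main obstacle is bounding $\Sigma$ by $a^{1+\varepsilon}k_1^{\varepsilon}$ uniformly. We use $\gcd(k_1\ell,a) \leqslant a$, and write $\Sigma$ as an Euler product over $p \mid ak_1$. For $p \mid a$ the function $\rho^-_{F,k_1a}(p^\nu;v)$ equals $1$, since the primes dividing $a$ are excluded from $\rho_F^-(\cdot,k_1a)$ and from $u(\cdot,k_1a)$. For $p \mid k_1$, $p\nmid a$ likewise the excluded set contains $p$, since the exclusion is over all of $k_1a$. So every local factor is $\sum_{\nu \geqslant 0} |\psi_L(p^\nu)|/p^{\nu(1-\delta)}$. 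Since $|\psi_L(p^\nu)| \leqslant (\nu+1)^{n-1}$ and $1-\delta > 3/4$, each factor is at most $1 + C/p^{3/4} \leqslant C'$. The product over $p \mid ak_1$ is thus at most $C'^{\omega(ak_1)} \ll_\varepsilon (ak_1)^\varepsilon$.

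Collecting the bounds gives total error $\ll_\varepsilon a^{1+\varepsilon}k_1^{\varepsilon}y^{\varepsilon-1/(4n)}$ after renaming $\varepsilon$. The main term $c_{F,L}(v)\,u_{F,L}(v)(ak_1)\,\sigma_{k_1}(a)$ is as claimed. For $y<1$ the statement is trivial from the same bounds.
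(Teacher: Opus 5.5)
Your proposal is correct and follows the paper's proof. Both factor $k=\ell k'$ with $\ell\mid(ak_1)^\infty$, apply Lemma~\ref{NT sum : main term} to the inner sum with $h=\rho_F^-v$ and modulus $ak_1$, read off $\sigma_{k_1}(a)$, and bound the error by an Euler product over $p\mid ak_1$. The only cosmetic difference is that you truncate at $\ell\leqslant y$ and handle the tail by Rankin's trick, whereas the paper sums over all $\ell$ and lets the $y/\ell<1$ case of Lemma~\ref{NT sum : main term} absorb that tail, which amounts to the same estimate.
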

\begin{proof} Every integer $k$ can be written uniquely $k = \ell k'$ with $\gcd(k', ak_1) = 1$ and $\ell \mid (ak_1)^\infty$. We thus start by writing 
\[\mathfrak{S}(y,a,k_1;u)   = \sum_{\substack{ \ell \mid (ak_1)^\infty}} {\psi_L(\ell)  \over \ell} \gcd(k_1\ell,a)\!\!\!\!\!\!\!  \sum_{\substack{k' \leqslant y / \ell \\ \gcd(k',ak_1W) = 1}}\!\!\!\!\!\!\! {\psi_L(k') \rho_F^-(k') u(k')  \over k'}. \]Since $v \in \mathcal{U}$, the multiplicative function $f : k \longmapsto \rho_F^-(k) v(k)$ satisfies $(i)$ and $(ii)$ from Lemma~\ref{NT sum : main term}. Hence, Lemma \ref{NT sum : main term} yields
    \[ \sum_{\substack{k' \leqslant y/\ell \\ \gcd(k',ak_1W) = 1}} {\psi_L(k') \rho_F^-(k') v(k') \over k'}  = c_{F,L}(v) u_{F,L}(v)(ak_1) + O\left( \left(ak_1 {y \over \ell} \right)^\varepsilon \left( {y \over \ell} \right)^{ -{1 \over 4n} } \right). \]The main term thus follows from the definition of $\sigma_k(a)$. We now write that the error term is 
    \[ \ll_\varepsilon a^{1 + \varepsilon}k_1^\varepsilon  y^{- {1 \over 4 n}+ \varepsilon} \sum_{ \ell \mid (ak_1)^\infty} {\psi_L(\ell) \over \ell^{1 - {1 \over 4n} + \varepsilon}}. \]Using the estimate $\psi_L(\ell) \ll_\varepsilon \ell^\varepsilon$, the conclusion follows from the error being 
    \[ \ll_\varepsilon a^{1 + \varepsilon} k_1^\varepsilon y^{- {1 \over 4 n}+ \varepsilon} \prod_{p \mid ak_1 } \left( 1 - {1 \over p^{1 - {1 \over 4n}}} \right)^{-1}. \]
\end{proof}

Recall that the real number $b_F$ is defined by (\ref{def b}). For $B \geqslant 2$ and $z \leqslant b_FB^2$, we introduce  
\[ \tag{2.14} \label{region}\mathcal{R}(B,z) := \{ (s,t) \in  [-B,B]^2 : z\leqslant |F(s,t)| \}.\]For $\ell > 0$ and $0 < z_1 < z_2$, we let
\[ \label{Delta vol} \tag{2.15} \Delta \mathrm{vol}(B,z_1,z_2):= \mathrm{vol}(  \mathcal{R}(B, z_1)) - \mathrm{vol}(  \mathcal{R}(B, z_2) ).  \]In section \ref{section4}, we will also need an estimate for sums of the form 

\[ \label{frac S vol} \tag{2.16}\mathfrak{S}^{\mathrm{vol}}(y,a,k_1;z;u) := \sum_{\substack{k \leqslant y \\ \gcd(k,W)= 1 }} \!\!\!\!\! {\psi_L(k)\rho_{F,k_1a}^-(k;u) \over k} \gcd(k_1k,a)  \mathrm{vol}(  \mathcal{R}(B, zk_1k )).\]This is done in the following lemma.

\begin{lemme}\label{estim final avec vol}
     Let $L/\eQ$ be an abelian number field of degree $n \geqslant 2$, $F \in \eZ[s,t]$ be a binary quadratic form which is irreducible over $L$, let $W$ and $w_0$ be as in $($\ref{choice W}$)$ and $v \in \mathcal{U}$. Let $\varepsilon \in \left( 0, {1 \over 4n} \right)$. Enlarging $w_0$ if necessary, for $B \geqslant 2$, for all $k_1, a \in \eN$ satisfying $\gcd(k_1a, W) =~1$, for all $y > 0$ and $z > 0$ such that $k_1 z y \leqslant b_F B^2$, we have
    \begin{align*} \mathfrak{S}^{\mathrm{vol}}(y,a,k_1;z;v)   = & \; 4c_{F,L}(v)  u_{F,L}(v)(ak_1)  \sigma_{k_1}(a)B^2   \\
    & + O_\varepsilon \left(a^{1 + \varepsilon}k_1^\varepsilon \left( B (k_1z)^{1/2}+ (B^2 + k_1 z y \log B) y^{ - {1 \over 4n } + \varepsilon } \right) \right),\end{align*}where $c_{L,F}(v) > 0$ and $u_{L,F}(v) \in \mathcal{U}$ are as in (\ref{c_{F,L}(v)}) and (\ref{u_{F,L}(v)}). 
\end{lemme}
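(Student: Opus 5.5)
The natural plan is to write $\mathrm{vol}(\mathcal{R}(B,zk_1k)) = 4B^2 - \mathrm{vol}\{(s,t)\in[-B,B]^2 : |F(s,t)| < zk_1k\}$. This splits $\mathfrak{S}^{\mathrm{vol}}$ into a main sum weighted by $4B^2$ and a correction sum weighted by the small-region volume. The main sum is exactly $4B^2\,\mathfrak{S}(y,a,k_1;v)$. Corollary \ref{lemme central general} then gives directly the main term $4c_{F,L}(v)u_{F,L}(v)(ak_1)\sigma_{k_1}(a)B^2$ plus an error $O_\varepsilon(a^{1+\varepsilon}k_1^\varepsilon B^2 y^{-1/(4n)+\varepsilon})$, which is one of the claimed error terms.

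For the correction we need the volume $V(x):=\mathrm{vol}\{(s,t)\in[-B,B]^2 : |F(s,t)|<x\}$ for $x\le b_FB^2$. Since $F$ is a quadratic form irreducible over $\eQ$, it has no rational linear factor and is homogeneous of degree $2$. Passing to polar coordinates, we compare $\{|F|<x\}\cap[-B,B]^2$ with the cone region. If $F$ is definite, $\{|F|<x\}$ is an ellipse of area $\asymp x$. If $F$ is indefinite, $F=c\,\ell_1\ell_2$ with real linear forms $\ell_1,\ell_2$. Changing variables $(u,v)=(\ell_1,\ell_2)$ turns the region into $|uv|<x/c$ inside a box of size $\asymp B$, of area $\asymp x\log(B^2/x)+x$. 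I will simply use the clean bound $V(x)\ll x\log B$ together with the exact structure $V(x)=x\,g(\log(B^2/x))$. The key step is to rewrite the correction sum
\[
\sum_{\substack{k\le y\\ \gcd(k,W)=1}} \frac{\psi_L(k)\rho^-_{F,k_1a}(k;v)}{k}\gcd(k_1k,a)\,V(zk_1k)
\]
by partial summation in $k$. Let $S(t):=\mathfrak{S}(t,a,k_1;v)$ and $M:=c_{F,L}(v)u_{F,L}(v)(ak_1)\sigma_{k_1}(a)$. Then $S(t)=M+E(t)$ with $E(t)\ll a^{1+\varepsilon}k_1^\varepsilon t^{-1/(4n)+\varepsilon}$ by Corollary \ref{lemme central general}. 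Summation by parts gives
\[
S(y)V(zk_1y)-\int_1^y S(t)\,\frac{\mathrm{d}}{\mathrm{d}t}V(zk_1t)\,\mathrm{d}t .
\]
Substituting $S=M+E$, the $M$ contributions telescope to $M\,V(zk_1)$, which is $\ll a^{\varepsilon}k_1^\varepsilon\,V(zk_1)$ since $\sigma_{k_1}(a)\ll a^{1+\varepsilon}$.

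It remains to bound $V(zk_1)$ by $B(k_1z)^{1/2}$ rather than by $zk_1\log B$, which is too weak when $zk_1$ is tiny. Here I would use the sharper bound $V(x)\ll B x^{1/2}$ for $x\le b_FB^2$. In the indefinite case, $|uv|<x'$ with $|u|,|v|\ll B$ has area $\ll x'(1+\log(B^2/x'))\ll Bx'^{1/2}$. In the definite case, the area is $\ll x\le Bx^{1/2}\sqrt{b_F}$. The $E$ contributions are bounded by $|E(y)|V(zk_1y)$ plus $\int_1^y |E(t)|\,\mathrm{d}V(zk_1t)$. Using monotonicity of $V$ and $|E(t)|\ll a^{1+\varepsilon}k_1^\varepsilon t^{-1/(4n)+\varepsilon}$, together with $V(zk_1y)\ll zk_1y\log B$, the first piece is $\ll a^{1+\varepsilon}k_1^\varepsilon\, zk_1y\log B\; y^{-1/(4n)+\varepsilon}$. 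The integral I would handle by splitting dyadically in $t$, with the integral near $t\approx 1$ absorbed into the $B(k_1z)^{1/2}$ term.

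The main obstacle is this integral: a crude dyadic estimate gives $\sum_j (2^j)^{-1/(4n)}V(zk_1 2^{j+1})$, which is dominated by the top range and yields $zk_1y\log B\,y^{-1/(4n)+\varepsilon}$ only if $V(x)/x$ is essentially constant. Since $V(x)\ll x\log B$ uniformly, that is enough: each dyadic block is at most $(2^j)^{-1/(4n)+\varepsilon}\,zk_1 2^{j}\log B$, and the geometric growth makes the sum $\ll zk_1y^{1-1/(4n)+\varepsilon}\log B$. This matches the claimed error after factoring out $y$. Combining the main sum with these corrections gives the lemma.
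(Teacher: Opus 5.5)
Your proposal is correct and is essentially the paper's argument. The paper performs discrete Abel summation directly against $\mathrm{vol}(\mathcal{R}(B,zk_1k))$, so the main term telescopes to $c_{F,L}(v)u_{F,L}(v)(ak_1)\sigma_{k_1}(a)\,\mathrm{vol}(\mathcal{R}(B,k_1z))$; it then bounds $4B^2-\mathrm{vol}(\mathcal{R}(B,k_1z))$ and the increments $\Delta\mathrm{vol}\ll k_1z\log B$ by a case split on whether $F$ factors over $\eR$, which is the same as your split $\mathrm{vol}=4B^2-V$, Stieltjes summation against $V$, and $V(x)\ll\min(x\log B,\,Bx^{1/2})$. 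The only slip is that the telescoped term $M\,V(zk_1)$ is $\ll a^{1+\varepsilon}k_1^{\varepsilon}B(k_1z)^{1/2}$ (not $a^{\varepsilon}k_1^{\varepsilon}V(zk_1)$), which is still within the stated error.
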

\begin{proof}
We recall that $\mathfrak{S}(y,a,k_1;v)$ is defined in (\ref{fracS}). A discrete version of the summation by parts provides

\begin{align*} \mathfrak{S}^{\mathrm{vol}}(y,a,k_1;z;v) = & \; \mathrm{vol}( \mathcal{R}(B,k_1z \lfloor y \rfloor )  ) \mathfrak{S}(\lfloor y \rfloor,a,k_1;v)
+R(y,a,k_1;z;v),\end{align*}where 
\[ R(y,a,k_1;z;v) :=  \sum_{1 \leqslant \ell \leqslant \lfloor y \rfloor-1} \!\!\!\!\!\! \mathfrak{S}(\ell,a,k_1;v) \Delta \mathrm{vol}(B, k_1 z\ell, k_1z (\ell +1)). \]Using Corollary \ref{lemme central general} for $\ell \in \{1, \dots , \lfloor y \rfloor \}$, we have \[ \mathfrak{S}(\ell,a,k_1;v) =c_{F,L}(v) u_{F,L}(v)(ak_1)  \sigma_{k_1}(a)+ O_\varepsilon \left( a^{1+\varepsilon}k_1^\varepsilon \ell^{- {1 \over 4n} + \varepsilon}  \right),\]which yields a telescoping sum in $R(y,a,k_1;z;v) $ and provides
 
\begin{align*}
    R(y,a,k_1;z;v) = &\; c_{F,L}(v) u_{F,L}(v)(ak_1)  \sigma_{k_1}(a) \left( \mathrm{vol}(B, k_1 z) - \mathrm{vol}(B,  k_1z \lfloor y \rfloor) \right) \\
    & + O_\varepsilon\left(a^{1+ \varepsilon} k_1^\varepsilon  \sum_{\ell \leqslant \lfloor y \rfloor -1}  {\Delta \mathrm{vol}(B,k_1z\ell,k_1z(\ell +1)) \over \ell^{{1 \over 4n} - \varepsilon}} \right),
\end{align*}
and 
\[ \mathfrak{S}(\lfloor y \rfloor,a,k_1;v) =c_{F,L}(v) u_{F,L}(v)(ak_1)  \sigma_{k_1}(a)+ O_\varepsilon \left( a^{1+\varepsilon}k_1^\varepsilon  y^{- {1\over 4n} + \varepsilon}  \right).\]Using the trivial bound $ \mathrm{vol}( \mathcal{R}(B,k_1z \lfloor y \rfloor )  ) \ll B^2 $, it follows that
\begin{align*} \mathfrak{S}^{\mathrm{vol}}(y,a,k_1;z;v)  =  & \; c_{F,L}(v) u_{F,L}(v)(ak_1) \sigma_{k_1}(a)  \mathrm{vol}(  \mathcal{R}(B, k_1z )) \\
& + O_\varepsilon \left( a^{1+\varepsilon}k_1^\varepsilon \left( B^2 y^{- {1 \over 4n} + \varepsilon} + \sum_{\ell \leqslant \lfloor y \rfloor -1}  {\Delta \mathrm{vol}(B,k_1z\ell,k_1z(\ell +1)) \over \ell^{{1 \over 4n} - \varepsilon}} \right)  \right). \end{align*}If $F$ has no linear factor in $\eR[s,t]$, then we have $V_\infty := \mathrm{vol}\left( \left\{ (s,t) \in \eR^2 : |F(s,t)| < 1 \right\} \right) \ll 1 $ and we can use the estimates
\[ 4 B^2 - \mathrm{vol}(  \mathcal{R}(B, k_1z )) \leqslant  \mathrm{vol}\left( \left\{ (s,t) \in \eR^2 : |F(s,t)| < k_1z \right\} \right) = V_\infty k_1z \]and the quantity $\Delta \mathrm{vol} (B, z\ell , z(\ell +1))$ is bounded by
\begin{align*}   \mathrm{vol}\left( \left\{ (s,t) \in \eR^2 : |F(s,t)| < (\ell+ 1)z  \right\} \right) - \mathrm{vol}\left( \left\{ (s,t) \in \eR^2 : |F(s,t)| < \ell z \right\} \right) = V_\infty z.  \end{align*}If $F$ has a linear factor in $\eR[s,t]$, we have $F(s,t) = (as + bt)(cs + dt)$ with $a,b,c,d \in \eR^*$. If $ad = bc$, we are reduced to the case where $F(s,t) = \alpha(\beta s + \gamma t)^2$ with $\alpha, \beta , \gamma \in \eR^*$. Hence, the quantity $\mathrm{vol}(\{ (s,t) \in [-B,B]^2 : |F(s,t)| \leqslant z \})$ is the area inside the strip in $[-B,B]^2$ delimited by the lines $as + bt = \pm \sqrt{ z |\alpha|^{-1} }$, so we have 
\[ \mathrm{vol}(\{ (s,t) \in [-B,B]^2 : |F(s,t)| \leqslant z \}) \ll_F z^{1/2} B. \]If $ad - bc \neq 0$, a change of variables provides that $\mathrm{vol}(\{ (s,t) \in [-B,B]^2 : |F(s,t)| \leqslant z \})$ is bounded by
\[  \mathrm{vol}\left(\left\{ (x_1,x_2) \in \eR^2 : \begin{tabular}{c} $ -(|a|+|b|)B\leqslant x_1 \leqslant  (|a|+ |b|)B $ \\ $   -(|c|+|d|)B \leqslant x_2 \leqslant (|c|+ |d|)B $ \\$  |x_1x_2| \leqslant z $ \end{tabular} \right\}\right).\]
 It suffices to study the case $F(s,t) = st$, at the cost of replacing $[-B,B]^2$ by a rectangle $I_1 \times I_2$ where the interval $I_j$ is of the form $[-\alpha_jB,\alpha_j B]$ with $\alpha_j > 0$ depending only on $F$. In that case,
\begin{align*}\mathrm{vol}(\{ (s,t) \in [-B,B]^2 : |F(s,t)| \leqslant z \}) & \ll \mathrm{vol}\left( (s,t) \in I_1 \times I_2 : |s| < 1 \right)+ z \int_1^{\alpha_1 B} {\mathrm{d}s \over s} \\
& \ll_F B + z \log B \end{align*}and $\Delta \mathrm{vol} (B, z\ell , z(\ell +1))$ is bounded (up to a multiplicative constant depending at most on $F$) by the area delimited by the two hyperbolas $s \longmapsto {z \ell \over s}$ and $s \longmapsto {z (\ell + 1) \over s}$ in the rectangle $I_1 \times I_2$. To compute this area, we proceed as in Figure $1$.\\

\begin{figure}[h!]\label{fig 1}
\centering

\def\z{3}
\def\elle{1}
\def\B{3}
\def\aone{1.4}
\def\atwo{1}

\definecolor{colorone}{RGB}{238, 52, 35}
\definecolor{colortwo}{RGB}{0, 148, 181}
\definecolor{colorthree}{RGB}{64, 183, 105}

\begin{tikzpicture}[scale=1.6]
\small

\pgfmathsetmacro{\sA}{\z*\elle/(\atwo*\B)}
\pgfmathsetmacro{\sB}{\z*(\elle+1)/(\atwo*\B)}
\pgfmathsetmacro{\sC}{\aone*\B}
\pgfmathsetmacro{\ymax}{\atwo*\B}

\draw[->, thick] (0,0) -- (5.2,0) node[right] {$s$};
\draw[->, thick] (0,0) -- (0,3.8) node[above] {};

\draw[thick] (0,0) rectangle (\aone*\B,\ymax);

\begin{scope}
\clip (0,0) rectangle (\aone*\B,\ymax);

\fill[colortwo!25]
plot[domain=\sA:\sC, samples=200]
(\x, {\z*(\elle+1)/\x})
--
plot[domain=\sC:\sA, samples=200]
(\x, {\z*\elle/\x})
-- cycle;

\fill[colorone!20, pattern=north east lines, pattern color=colorone]
plot[domain=\sA:\sB, samples=200]
(\x, {\z*\elle/\x})
--
(\sB,0) -- (\sA,0) -- cycle;

\fill[colorthree!20, pattern=north west lines, pattern color=colorthree]
plot[domain=\sB:\sC, samples=200]
(\x, {\z/\x})
--
(\sC,0) -- (\sB,0) -- cycle;

\draw[colortwo, thick, domain=\sA:\sC, samples=200]
plot (\x, {\z*\elle/\x});
\draw[colorone, thick, domain=\sB:\sC, samples=200]
plot (\x, {\z*(\elle+1)/\x});

\end{scope}

\draw[dashed, thick] (\sA,0) -- (\sA,\ymax);
\draw[dashed, thick] (\sB,0) -- (\sB,\ymax);

\draw (0,\ymax) -- (-0.08,\ymax);

\node[left] at (0,\ymax) {$\alpha_2 B$};

\fill (\sA,0) circle (1.2pt);
\fill (\sB,0) circle (1.2pt);

\fill (\sC,0) circle (1.2pt);

\node[below=6pt] at (\sA,0) {${z\ell \over \alpha_2 B}$};
\node[below=6pt] at (\sB,0) {${z(\ell+1)\over\alpha_2 B}$};
\node[below=6pt] at (\sC,0) {$\alpha_1 B$};

\node[colortwo] at (3.9,1.0) {$\frac{z\ell}{s}$};
\node[colorone] at (3.9,2.6) {$\frac{z(\ell+1)}{s}$};

\end{tikzpicture}
\caption{Area between the two hyperbolas $s \longmapsto {z \ell \over s}$ and $s \longmapsto {z (\ell + 1) \over s}$ in $I_1 \times I_2$}
\end{figure}
We thus obtain

\[  \Delta \mathrm{vol} (B, z\ell , z(\ell +1)) = z  - \int_{{z\ell \over \alpha_2 B}}^{{z (\ell+1) \over \alpha_2 B}} {z \ell \over s} \mathrm{d}s + \int_{{z (\ell+1) \over \alpha_2 B}}^{\alpha_1 B} {z \over s} \mathrm{d}s, \]so that~\[ \Delta \mathrm{vol} (B, z\ell , z(\ell +1)) \ll z \left(1 - \ell \log \left(1 + {1 \over \ell } \right) + \log\left( {\alpha_1 \alpha_2B^2 \over z (\ell + 1)} \right)\right) \ll_F   z \log B.\] It follows that we have
\begin{align*} \mathfrak{S}^{\mathrm{vol}}(y,a,k_1;z;v) = & \; 4c_{F,L}(v) u_{F,L}(v)(ak_1) \sigma_{k_1}(a)  B^2 \\ & + O_\varepsilon \left(a^{1+\varepsilon} k_1^\varepsilon \left( B(k_1z)^{1/2} + B^2 y^{- {1 \over 4n} + \varepsilon} + z k_1  y^{1 - {1 \over 4n } + \varepsilon }\log B \right) \right).  \end{align*} The inequality $k_1 z y \leqslant b_FB^2$ ensures that the error term is admissible.
\end{proof}

We now provide $(n-1)$-dimensional versions of the previous results, that will be useful in section \ref{section4}. For $\mathcal{A} \subset \eN^{n-1} $, we introduce

\[ \label{frac S diese} \tag{2.17} \boldsymbol{\mathfrak{S}}^\# ( \mathcal{A} ,a ; v) :=  \sum_{\substack{ 
   \bm k \in \mathcal{A} }}\Psi_L(\bm k){\rho_{F,a}^- (k_1\cdots k_{n-1};v) \over k_1 \cdots k_{n-1}} \gcd(k_1 \cdots k_{n-1},a) \]where $\rho_{F,a}^-(\cdot \; ; v)$ is as in (\ref{def func}). 

\begin{lemme}\label{Bound error multi D 1}  Let $L/\eQ$ be an abelian number field of degree $n \geqslant 2$, $F \in \eZ[s,t]$ be a binary quadratic form which is irreducible over $L$, let $W$ and $w_0$ be as in $($\ref{choice W}$)$ and $v \in \mathcal{U}$. Let $\varepsilon \in \left( 0, {1 \over 8n^2} \right)$. Enlarging $w_0$ if necessary, for $y > 0$, we have
   \[ \boldsymbol{\mathfrak{S}}^\# \left( \eN^{n-1} \smallsetminus [1,y]^{n-1} ,a;  v\right) \ll_\varepsilon  a^{1+ \varepsilon}y^{(2n-3) \varepsilon - {1 \over 4n}}. \]In particular,
   \[ \lim_{y \to + \infty} \boldsymbol{\mathfrak{S}}^\# \left(  [1,y]^{n-1} ,a; v\right)  = c_{L,F}( v) u_{L,F}(v)(a) \sigma_{1}(a) . \]
\end{lemme}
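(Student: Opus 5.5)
The idea is to reduce the $(n-1)$-dimensional tail to one-dimensional sums already controlled by Corollary \ref{lemme central general}, via a union bound over the coordinate that exceeds $y$.

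\textbf{Splitting the tail.} The complement $\eN^{n-1}\smallsetminus[1,y]^{n-1}$ is the union over $j\in\{1,\dots,n-1\}$ of the sets $\mathcal{A}_j$ where $k_j>y$. Since a single term is not absolutely summable, I will not bound term by term. Instead I use inclusion--exclusion on the finitely many coordinates, or equivalently a decomposition by the first index $j$ with $k_j>y$. In every piece we have $k_i\leqslant y$ for $i<j$ and $k_j>y$, while the $k_i$ with $i>j$ are unrestricted.

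\textbf{Isolating one variable.} In each piece I fix all coordinates except $k_j$ and write $m:=\prod_{i\neq j}k_i$. Then $\Psi_L(\bm k)=\chi_j(k_j)\prod_{i\neq j}\chi_i(k_i)$. Using multiplicativity, I split $k_j=\ell k'$ with $\ell\mid(am)^\infty$ and $\gcd(k',amW)=1$, exactly as in the proof of Corollary \ref{lemme central general}. The inner sum over $k'$ then has the shape of Lemma \ref{NT sum : main term}, except that $\chi_j$ replaces $\psi_L$ and $v\rho_F^-$ is the weight. The Remark after Lemma \ref{convexity bound} says the convexity input holds for a single non-trivial $\chi_j$; in fact it gives the better exponent $1-\sigma$ in place of $(n-1)(1-\sigma)$. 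The Perron argument of Lemma \ref{NT sum : main term} therefore gives
\[
\sum_{\substack{k'>Y\\ \gcd(k',amW)=1}} \frac{\chi_j(k')\rho_F^-(k')v(k')}{k'} \ll_\varepsilon (am)^\varepsilon Y^{\varepsilon-\frac1{4n}} ,
\]
as a tail with no main term. The remaining sum over $\ell\mid(am)^\infty$ costs $(am)^\varepsilon$, and the factor $\gcd(\cdot,a)\leqslant a$ gives $a^{1+\varepsilon}$.

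\textbf{Summing the other variables.} This step is the main obstacle. After isolating $k_j$, the other variables $k_i$ with $i>j$ range over all of $\eN$ with weight $1/k_i$, and a trivial bound would diverge. I therefore plan to handle them iteratively rather than trivially. The cleanest device is the hyperbola trick: order the coordinates so that $k_j$ is the largest, i.e.\ split according to which coordinate is maximal. On the region where $k_j=\max_i k_i>y$, all the other variables satisfy $k_i\leqslant k_j$. I then sum $k_j$ over dyadic ranges $(Y,2Y]$ with $Y\geqslant y$, using the one-dimensional tail bound (difference of two tails) for $k_j$ and the trivial bound $\sum_{k_i\leqslant 2Y}k_i^{-1+\varepsilon}\ll Y^{\varepsilon}$ for each of the $n-2$ other coordinates. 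This contributes $a^{1+\varepsilon}Y^{(n-2)\varepsilon+2\varepsilon-\frac1{4n}}$ per dyadic block. The constraint $\varepsilon<\tfrac1{8n^2}$ keeps the exponent negative, so summing over $Y=2^iy$ converges to $a^{1+\varepsilon}y^{(2n-3)\varepsilon-\frac1{4n}}$, after absorbing the $\varepsilon$-losses coming from $m^\varepsilon\leqslant Y^{(n-2)\varepsilon}$.

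\textbf{The limit.} To compute the limit, I compare $\boldsymbol{\mathfrak{S}}^\#([1,y]^{n-1},a;v)$ with the one-dimensional sum $\mathfrak{S}(y,a,1;v)$, which regroups the same terms over $k=k_1\cdots k_{n-1}\leqslant y$ via $\sum_{k_1\cdots k_{n-1}=k}\Psi_L(\bm k)=\psi_L(k)$. The two index sets differ only inside $\eN^{n-1}\smallsetminus[1,y^{1/(n-1)}]^{n-1}$ intersected with the complement of the product region, and the same max-coordinate argument shows this difference tends to $0$. Corollary \ref{lemme central general} with $k_1=1$ then identifies the limit as $c_{L,F}(v)u_{L,F}(v)(a)\sigma_1(a)$.
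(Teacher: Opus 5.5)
Your argument is correct, but it is organised differently from the paper's.

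\textbf{The paper's route.} The paper uses neither cube-ordered tails nor dyadic blocks. It cuts $\eN^{n-1}\smallsetminus[1,y]^{n-1}$ along the hyperbola $k_1\cdots k_{n-1}=y^{n-1}$.
\begin{itemize}
\item Beyond the hyperbola, the terms regroup via $\psi_L$ into $\sum_{k>y^{n-1}}\psi_L(k)\rho_{F,a}^-(k;v)\gcd(k,a)/k$. Corollary \ref{lemme central general} with $k_1=1$ bounds this at once.
\item The remaining region $\{k_1\cdots k_{n-1}\leqslant y^{n-1}\}\smallsetminus[1,y]^{n-1}$ is finite. After inclusion--exclusion over the coordinates exceeding $y$, one fixes all coordinates but one large $k_{n-1}>y$. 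The others then automatically satisfy $k_1\cdots k_{n-2}<y^{n-2}$, so bounding them trivially costs only $y^{O(\varepsilon)}$.
\item The inner sum over $k_{n-1}\in(y,y^{n-1}/(k_1\cdots k_{n-2})]$ is a difference of two single-character sums.
\end{itemize}
Because $[1,y]^{n-1}$ lies inside the hyperbolic region, this gives directly
$\boldsymbol{\mathfrak{S}}^\#([1,y]^{n-1},a;v)=\mathfrak{S}(y^{n-1},a,1;v)+O(a^{1+\varepsilon}y^{(2n-3)\varepsilon-\frac{1}{4n}})$.
So the bound and the value $c_{L,F}(v)u_{L,F}(v)(a)\sigma_1(a)$ of the limit come out together.

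\textbf{Your route.} Your max-coordinate decomposition plays the role of the hyperbolic cutoff: it is what keeps the companions of the large variable bounded. It yields a Cauchy estimate for the cube-ordered partial sums using only single-character input.

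The price is twofold. First, you need the dyadic summation. Second, you need a separate comparison of $[1,y]^{n-1}$ with the smaller region $\{k_1\cdots k_{n-1}\leqslant y\}$ to identify the limit. Done as you sketch, that comparison saves only about $y^{-1/(4n(n-1))}$, so it needs $\varepsilon$ small. This is harmless for a limit statement, but it is essentially the paper's key step in weaker form. Had you compared instead with $\{k_1\cdots k_{n-1}\leqslant y^{n-1}\}$, which contains the cube, you would get the full rate directly and the dyadic part would become unnecessary.

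\textbf{What your route buys.} It makes the summation convention explicit. The series over $\eN^{n-1}\smallsetminus[1,y]^{n-1}$ is only conditionally convergent, and the paper tacitly orders it by the product $k_1\cdots k_{n-1}$. Your limit step shows that ordering by cubes gives the same value, so the two readings of the tail agree.
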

\begin{proof}We start by restricting the sum to the $\bm k$ such that $k_1 \cdots k_{n-1} \leqslant y^{n-1}$. Indeed, we have 
\[ \sum_{\substack{ \bm k \in \eN^{n-1} \\ \bm k \notin [1,y]^{n-1} \\
    k_1 \cdots k_{n-1} > y^{n-1} }} \!\!\!\!\!\! \Psi_L(\bm k){\rho_{F,a}^- (k_1\cdots k_{n-1};v) \over k_1 \cdots k_{n-1}}  \gcd(k_1 \cdots k_{n-1},a)   = \sum_{\substack{k > y^{n-1}  }}  \psi_L( k){\rho_{F,a}^- (k;v) \over k} \gcd(k,a),  \]
and by Corollary \ref{lemme central general} with $k_1 = 1$ this quantity is 
\[ \ll_\varepsilon a^{1+\varepsilon}  y^{- {n-1 \over 4n} + \varepsilon (n-1)}. \]Therefore, we get that $ \boldsymbol{\mathfrak{S}}^\# \left( \eN^{n-1} \smallsetminus [1,y]^{n-1} ,a ; v\right) $ is equal to

    \begin{align*} \sum_{\substack{ \bm k \in \eN^{n-1} \\ \bm k \notin [1,y]^{n-1} \\ 
    k_1 \cdots k_{n-1} \leqslant y^{n-1} }} \!\!\!\!\!\!\!\!\! \Psi_L(\bm k){\rho_{F,a}^- (k_1\cdots k_{n-1};v)  \over k_1 \cdots k_{n-1}}  \gcd(k_1 \cdots k_{n-1},a) + O_\varepsilon \left( a^{1+\varepsilon}  y^{- {n-1 \over 4n} + \varepsilon (n-1)} \right).\end{align*}We apply the inclusion-exclusion principle to
    \[ \eN^{n-1} \smallsetminus [1,y]^{n-1} = \bigcup_{i = 1}^{n-1} \{ \bm k \in \eN^{n-1} : k_i > y \} \]in order to deal with the condition $\bm k \notin [1,y]^{n-1}$. We are led to estimate the sum
    \[ \boldsymbol{\mathfrak{S}}_I'(y,a;v) := \sum_{\substack{ \bm k \in \eN^{n-1} \\
    k_1 \cdots k_{n-1} \leqslant y^{n-1}\\ \forall i \in I, \; k_i > y }} \!\!\!\!\!\!\!\!\! \Psi_L(\bm k){\rho_{F,a}^-( k_1\cdots k_{n-1};v) \over k_1 \cdots k_{n-1}} \gcd(k_1 \cdots k_{n-1} ,a),\]where $I$ is any non-empty subset of $\{1, \dots, n-1\}$. Without loss of generality, we can assume that $n-1 \in I$, so that it suffices to prove that when $k_1, \dots, k_{n-2}$ are fixed, the contribution coming from the condition $k_{n-1} > y$ is small enough to ensure that the expected bound holds.\\
    
    For $\bm k'  = (k_1, \dots, k_{n-2})\in \eN^{n-2}$, we let 
    \[ \widetilde{\Psi_L}(\bm k') := \prod_{\ell = 1}^{n-2} \chi_\ell(k_\ell) \]so that 
    \[ \Psi_L(k_1, \dots, k_{n-1}) = \widetilde{\Psi_L}(\bm k') \chi_{n-1} (k_{n-1}).\]For $\bm k' \in \eN^{n-2}$ and for any Dirichlet character $\chi $, we define
    \[ \boldsymbol{\mathfrak{S}}_\chi (y,\bm k',a;v):= \sum_{\substack{k \leqslant y }} {\chi(k)  \over k}\rho_{F,ak_1\cdots k_{n-2}}^-(k;v) \gcd(k_1 \cdots k_{n-2}k,a),\]where $\rho_{F,ak_1\cdots k_{n-2}}^-(k;v) $ is as in (\ref{def func}). In this setting, the quantity $\boldsymbol{\mathfrak{S}}_I'(y,a;v)$ is equal to

    \[ \sum_{\substack{\bm k' \in \eN^{n-2} \\  k_1 \cdots k_{n-2} \leqslant y^{n-2} \\ \forall i \in I \smallsetminus \{n-1\}, \; k_i > y }} \!\!\!\!\!\!\!\widetilde{\Psi_L}(\bm k') {\rho_{F,a}^-(k_1\cdots k_{n-2};v) \over k_1 \cdots k_{n-2}} \left( \boldsymbol{\mathfrak{S}}_{\chi_{n-1}}\left( {y^{n-1} \over k_1 \cdots k_{n-2}} ,\bm k',a;v\right) - \boldsymbol{\mathfrak{S}}_{\chi_{n-1}} (y,\bm k',a;v)\right) . \] Applying Corollary \ref{lemme central general}, with $\psi_L$ replaced by $\chi_{n-1}$, we get that $\boldsymbol{\mathfrak{S}}_{\chi_{n-1}} \left( {y^{n-1} \over k_1 \cdots k_{n-2}} ,\bm k',a;v\right) - \boldsymbol{\mathfrak{S}}_{\chi_{n-1}} (y,\bm k',a;v)$ is
    \[\ll_\varepsilon \left( a^{1 + \varepsilon}(k_1 \cdots k_{n-2} )^\varepsilon \left(  y^{ \varepsilon - {1 \over 4n}} + (k_1 \cdots k_{n-2})^{{1 \over 4n} - \varepsilon} y^{-{n-1 \over 4n} + \varepsilon(n-1)} \right) \right). \] Now, we neglect the conditions $k_i  > y $ and we use the trivial bound for $\rho_{F,a}^-(\cdot ; v)$ to write
    
    \[ \sum_{\substack{\bm k' \in \eN^{n-2}  \\ k_1 \cdots k_{n-2} \leqslant y^{n-2} \\ \forall i \in I \smallsetminus \{n-1\}, \; k_i > y }} {\rho_{F,a}^-(k_1 \cdots   k_{n-2};v) \over (k_1 \cdots k_{n-2})^{1 - \varepsilon}}  \ll_\varepsilon \sum_{k \leqslant y^{n-2}} k^{ - 1+\varepsilon/2} \ll_\varepsilon y^{ (n-2) \varepsilon \over 2},  \]and 
    \[  \sum_{\substack{\bm k' \in \eN^{n-2}\\ k_1 \cdots k_{n-2} \leqslant y^{n-2}\\ \forall i \in I \smallsetminus \{n-1\}, \; k_i > y }}  {\rho_{F,a}^- (k_1 \cdots   k_{n-2};v) \over (k_1 \cdots k_{n-2})^{1 - {1 \over 4n}}}   \ll_\varepsilon \sum_{k \leqslant y^{n-2}} k^{ {1 \over 4n} + \varepsilon -1 } \ll_\varepsilon y^{ {n-2 \over 4n}  + (n-2) \varepsilon} .\]Therefore, since $|\widetilde{\Psi_L}(\bm k)| \leqslant 1$, we have $\boldsymbol{\mathfrak{S}}_I'(y,a;v) \ll_\varepsilon y^{(2n-3) \varepsilon - {1 \over 4n}} $ from which it follows that

 \begin{align*} & \sum_{\substack{ \bm k \in \eN^{n-1} \\ \bm k \notin [1,y]^{n-1}\\ 
    k_1 \cdots k_{n-1} \leqslant y^{n-1}}}\Psi_L(\bm k) {\rho_{F,a}^-(k_1\cdots k_{n-1};v) \over k_1 \cdots k_{n-1} }  \ll_\varepsilon  y^{(2n-3) \varepsilon - {1 \over 4n}},\end{align*}
    thus concluding the proof of the lemma.
\end{proof}

For technical reasons, we will need a bound for sums of the form

\[\label{H tilde} \tag{2.18}\boldsymbol{\mathfrak{S}}^{\mathrm{err}}(B,a;z;v;J):= \!\!\!\!\!\!\!\!\!\!\!\!\! \sum_{\substack{ \bm k \in \eN^{n-1} \\ k_1\cdots k_{n-1} \leqslant b_FB^2/z  \\
   \forall j \in J, \; k_j > z}} \!\!\!\!\!\!\!\!\!\!\! \Psi_L(\bm k){\rho_{F,a}^-(k_1\cdots k_{n-1};v) \over k_1 \cdots k_{n-1}} \gcd(k_1\dots k_{n-1},a ) \mathrm{vol}( \mathcal{R}(B,zk_1 \cdots  k_{n-1} )),\]where $v \in \mathcal{U}$, $z > 0$, $B \geqslant 2$ and $J  \subset \{ 1, \dots, n-1\}$.
   
\begin{lemme}\label{estim cal E}
    Let $L/\eQ$ be an abelian number field of degree $n \geqslant 3$, $F \in \eZ[s,t]$ be a binary quadratic form which is irreducible over $L$, let $W$ and $w_0$ be as in $($\ref{choice W}$)$, $I~\subset~\{~1, \dots , n-~1~\} $ with $\#I \geqslant 2$ and $v \in \mathcal{U}$. Let $\varepsilon \in \left( 0, {1 \over 4n} \right)$. Enlarging $w_0$ if necessary, for all $a \in \eN$ coprime to $W$, $B \geqslant 2$ and $z > 0$ such that $ z^2 \leqslant b_F B^2$, we have
     \[ \boldsymbol{\mathfrak{S}}^{\mathrm{err}}(B,a;z;v;I \smallsetminus \{i\}) \ll_\varepsilon a^{1+\varepsilon} {B^{2 + \varepsilon} \over z^{{ 1 \over 4n} - \varepsilon} }, \]for any fixed $i \in I$.
\end{lemme}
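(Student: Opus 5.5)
The idea is to isolate the variable $k_i$, which carries no lower-bound constraint, and sum over it with Lemma \ref{estim final avec vol}, which gives cancellation from the character $\chi_i$. The remaining variables are bounded trivially. Since $\#I \geqslant 2$, the set $I \smallsetminus \{i\}$ contains some index $j \neq i$, so there is at least one variable $k_j > z$. That variable will produce the saving $z^{-1/4n}$.

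\textbf{Step 1: separating the variable $k_i$.} Write $\bm k = (\bm k'', k_i)$, where $\bm k''$ collects the other $n-2$ coordinates, and set $m := \prod_{\ell \neq i} k_\ell$. For fixed $\bm k''$, the inner sum over $k_i$ is
\[ \sum_{k_i \leqslant b_FB^2/(zm)} \chi_i(k_i) \frac{\rho_{F,am}^-(k_i;v)}{k_i}\gcd(m k_i,a)\,\mathrm{vol}(\mathcal{R}(B, z m k_i)), \]
up to the harmless factor $\rho_{F,a}^-(m;v)/m$ in front.

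\textbf{Step 2: applying Lemma \ref{estim final avec vol}.} I use Lemma \ref{estim final avec vol} with $\psi_L$ replaced by $\chi_i$; the Remark after Lemma \ref{convexity bound} justifies this replacement, exactly as in the proof of Lemma \ref{Bound error multi D 1}. Take $k_1 \to m$, $y = b_FB^2/(zm)$, and note that $m y z = b_F B^2$, which is admissible.

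The multiplicative function here is $\rho^-_{F,am}(\cdot;v)$ rather than $\rho^-_{F,ma}$, and the gcd factor is $\gcd(mk,a)$. This matches the shape $\mathfrak{S}^{\mathrm{vol}}(y,a,m;z;v)$ after the reindexing performed in Corollary \ref{lemme central general}. If $\gcd(m,W)\neq 1$ the term vanishes anyway, because $\rho_{F,a}^-(m;v)$ carries the factor $\mathds{1}_{\gcd(m,W)=1}$.

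The lemma gives a main term $\ll a^{1+\varepsilon} m^{\varepsilon} B^2$, since $\sigma_m(a) \ll a^{1+\varepsilon}m^\varepsilon$. It also gives an error term
\[ \ll a^{1+\varepsilon} m^{\varepsilon}\left( B(mz)^{1/2} + B^2\log B \, y^{-1/4n+\varepsilon}\right). \]
Note that the leading term of the lemma is independent of $y$. So the main term does not decay, and decay in $z$ must come from the outer sum over $\bm k''$.

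\textbf{Step 3: the outer sum over $\bm k''$.} Summing $m^{-1+\varepsilon}$ over $\bm k''$ with $k_j > z$ for $j \in I\smallsetminus\{i\}$ and $m \leqslant b_F B^2/z$ only gives roughly $(\log B)^{n-2}$, which shows no decay in $z$. This is the main obstacle.

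To get decay, I will exploit that the main term $4c\,u(am)\sigma_m(a)B^2$ is independent of $k_j$ apart from the arithmetic weights. I will then perform the $k_j$-sum, for some $j\in I\smallsetminus\{i\}$, with the character $\chi_j$ via Corollary \ref{lemme central general}, as a tail sum over $k_j > z$. That tail is $\ll (\cdot)^{1+\varepsilon} z^{-1/4n+\varepsilon}$, in the same way the tails are handled in Lemma \ref{Bound error multi D 1}. Concretely:
\begin{itemize}
\item First I swap so that the main term $B^2$ multiplies a sum over $k_j>z$ of $\chi_j(k_j)\rho^-(\cdot)/k_j$ with weights that are multiplicative in $k_j$. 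Corollary \ref{lemme central general} bounds this tail by $z^{\varepsilon-1/4n}$ times $(\text{other } k)^\varepsilon$.
\item The remaining variables are then summed trivially, at a cost of $B^{\varepsilon}$.
\end{itemize}

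\textbf{Step 4: the error terms.} The error terms from Step 2 are controlled as follows.
\begin{itemize}
\item The term $B(mz)^{1/2}$, summed with weight $m^{-1+\varepsilon}$ over $m \leqslant B^2/z$, gives $\ll B\cdot B^{1+\varepsilon}$. This is acceptable since the hypothesis $z^2 \leqslant b_FB^2$ gives $z^{1/4n} \leqslant B^{1/4n}$, so the loss can be absorbed into $B^{2+\varepsilon}z^{-1/4n+\varepsilon}$ after adjusting $\varepsilon$.
\item The term $B^2 y^{-1/4n}$ becomes $B^{2-1/2n}(zm)^{1/4n}$. Its sum over $m$ is $\ll B^{2+\varepsilon}z^{-1/4n}$, using $zm \leqslant b_FB^2$ and $z^2\leqslant b_FB^2$.
\end{itemize}

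The delicate bookkeeping is making the two partial summations, in $k_i$ and in $k_j$, compatible. I expect to handle this by bounding the volume weight monotonically and writing it via summation by parts in $k_j$, as in the proof of Lemma \ref{estim final avec vol}.
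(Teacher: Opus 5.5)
There is a genuine gap, and it sits in Step 4, which is where all of the saving has to come from. You do the inner summation over the \emph{unconstrained} variable $k_i$. The complementary product $m=\prod_{\ell\neq i}k_\ell$ then still ranges up to $X:=b_FB^2/z$. For $m$ near $X$ the inner sum has length $y=b_FB^2/(zm)\approx 1$, so Lemma \ref{estim final avec vol} gives no saving there. Sum your two Step 2 error bounds over $\bm k''$, even with the weight $m^{-1}$ and ignoring the $m^{\varepsilon}$ losses:
\[ \sum_{m\leqslant X}\frac{B(mz)^{1/2}}{m}\asymp Bz^{1/2}X^{1/2}\asymp B^2, \qquad \sum_{m\leqslant X}\frac{B^{2-\frac{1}{2n}}(zm)^{\frac{1}{4n}}}{m}\asymp B^{2-\frac{1}{2n}}z^{\frac{1}{4n}}X^{\frac{1}{4n}}\asymp B^2 . \]
This is no better than the trivial bound.

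Your second bullet is miscomputed: the inequality $zm\leqslant b_FB^2$ produces no negative power of $z$. The absorption argument in your first bullet runs the wrong way. To absorb $B^{2+\varepsilon}$ into $B^{2+\varepsilon}z^{-1/4n+\varepsilon}$ you would need $z^{1/4n}\ll B^{\varepsilon}$, which fails for $z=B^{2/n}$, the value used in Proposition \ref{key prop}. These errors are summed in absolute value, so the cancellation you plan to extract from the main term in Step 3 cannot rescue them.

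The fix is to swap the roles of the variables, which is what the paper does. Pick $j\in I\smallsetminus\{i\}$ and fix the remaining coordinates, with product $P$. Sum over the \emph{constrained} variable $k_j\in(z,y_P]$, where $y_P=\lfloor b_FB^2/(zP)\rfloor$, writing this sum as $\mathfrak{S}^{\mathrm{vol}}_{\chi_j}(y_P,\dots)-\mathfrak{S}^{\mathrm{vol}}_{\chi_j}(z,\dots)$. This gains three things:
\begin{itemize}
\item Since $k_j>z$, one has $P\leqslant b_FB^2/z^2$, so Lemma \ref{estim final avec vol} applies at both endpoints (in particular $Pz\cdot z\leqslant b_FB^2$).
\item Both endpoints are $\geqslant z$, so each error term already carries the saving $z^{-1/4n}$.
\item The main term $4c_{F,L}(v)u_{F,L}(v)(aP)\sigma_P(a)B^2$ is independent of $y$, so the main terms cancel exactly. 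No Step 3 and no second summation by parts are needed.
\end{itemize}
The remaining error terms, summed trivially over $P\leqslant b_FB^2/z^2$ (with $k_i$ among the outer variables), give the claim. For example, $\sum_{P\leqslant B^2/z^2}P^{-1}B^{2-\frac{1}{2n}}(zP)^{\frac{1}{4n}}\asymp B^2z^{-\frac{1}{4n}}$. The lower endpoint contributes $(B^2+Pz^2\log B)\,z^{-\frac{1}{4n}+\varepsilon}$ directly.
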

\begin{proof}
  For simplicity's sake, we start by the case $n = 3$. We have for instance
   \[ \boldsymbol{\mathfrak{S}}^{\mathrm{err}}(B,a;z;v; \{2\})= \sum_{\substack{ \bm k \in \eN^2 \\ k_1k_2 \leqslant b_FB^2/z  \\
   k_2 >z}} \!\!\!\! \Psi_L(\bm k){\rho_{F,a}^-(k_1k_2;v) \over k_1 k_2} \gcd(k_1k_2,a)\mathrm{vol}( \mathcal{R}(B,z k_1 k_2 )),\]where 
   \[ \Psi_L(\bm k) = \chi_1(k_1) \chi_2(k_2).\]For $\chi \in \widehat{G}$, $k_1,a \in \eN$ and $y > 0$, we introduce
   \[ \mathfrak{S}_\chi^{\mathrm{vol}}(y,B,a,k_1;z;v): = \sum_{\substack{  k \leqslant y}}  {\chi(k) \over k} \rho_{F,k_1a}^-(k;v) \gcd(k_1k,a) \mathrm{vol}( \mathcal{R}(B, zk_1 k )). \]Let $y_{k_1} := \left\lfloor b_FB^2/(z k_1)\right\rfloor$. We have that $\boldsymbol{\mathfrak{S}}^{\mathrm{err}}(B,a;z;v; \{2\})$ is equal to
   \[   \sum_{\substack{   k_1 \leqslant b_F B^2/z^2 }} \chi_1(k_1) {\rho_{F,a}^-(k_1;v) \over k_1} \left(\mathfrak{S}_{\chi_2}^{\mathrm{vol}}(y_{k_1},B,a,k_1;z;v) - \mathfrak{S}_{\chi_2}^{\mathrm{vol}}(z,B,a,k_1;z;v)\right). \]Note that $k_1$ goes to $b_F B^2/z^2$ to ensure that $z < k_2$ and $k_1k_2 \leqslant b_FB^2 / z$. Therefore, when replacing $\psi_L$ by $\chi_2$, Lemma \ref{estim final avec vol} provides that the quantity $\mathfrak{S}_{\chi_2}^{\mathrm{vol}}(y_{k_1},B,a,k_1;z;v) - \mathfrak{S}_{\chi_2}^{\mathrm{vol}}(z,B,a,k_1;z;v) $ is
   \begin{align*} \ll_\varepsilon   a^{1+\varepsilon}k_1^\varepsilon \left( B(k_1z)^{1/2} + (B^2 + k_1 z y_{k_1} \log B) y_{k_1}^{ - {1 \over 4n } + \varepsilon } +  (B^2 + k_1z^2 \log B) z^{ - {1 \over 4n } + \varepsilon }   \right) .  \end{align*}Replacing $y_{k_1}$ by its definition, we get that $\mathfrak{S}_{\chi_2}^{\mathrm{vol}}(y_{k_1},B,a,k_1;z;v) - \mathfrak{S}_{\chi_2}^{\mathrm{vol}}(z,B,a,k_1;z;v) $ is 
   \[  \ll_\varepsilon   a^{1+\varepsilon} k_1^\varepsilon \left( B(k_1z)^{1/2} + (k_1z)^{{1 \over 4n}- \varepsilon} B^{2- {1 \over 2n} + \varepsilon}  +  (B^2 + k_1z^2 \log B) z^{ - {1 \over 4n } + \varepsilon }   \right).  \]Hence, since $\rho_{F,a}^-(k_1;v) \ll_\varepsilon k_1^\varepsilon$, summing over $k_1 \leqslant b_FB^2/z^2$ provides that $\boldsymbol{\mathfrak{S}}^{\mathrm{err}}(B,a;z;v; \{2\})$ is
  \begin{align*}  \ll_\varepsilon & a^{1+\varepsilon} \left( \left( {B \over z} \right)^{{1 \over 2} +2 \varepsilon} \!\!\!\!\! B z^{1/2} + \left( {B \over z} \right)^{{1 \over 2 n} + 2 \varepsilon} \!\!\!\!\! z^{{1 \over 4n}-\varepsilon } B^{2 - {1 \over 2n} + \varepsilon} \right. \\ & \left. + \left( {B \over z} \right)^{2\varepsilon} B^2 z^{- {1 \over 4n} + \varepsilon} + \left( {B \over z} \right)^{2+2 \varepsilon}\!\!\!\! z^{2 - {1 \over 4n} + \varepsilon} \log B \right).  \end{align*}yielding 
  \[ \boldsymbol{\mathfrak{S}}^{\mathrm{err}}(B,a;z;v; \{2\}) \ll_\varepsilon a^{1+\varepsilon} {B^{2 + \varepsilon} \over z^{{ 1 \over 4n} - \varepsilon} }.\]For $n > 3$, we choose $j \in I \smallsetminus \{i\}$ and we fix $\bm k' := (k_1, \dots k_{j-1}, k_{j+1} , \dots , k_{n-2})$. We let
  \[ P_j (\bm k') := k_1 \cdots k_{j-1} k_{j+1} \cdots k_{n-1},  \]and
    \[ \widetilde{\Psi_L}(\bm k',j) := \prod_{\ell \neq j} \chi_\ell(k_\ell), \]so that 
    \[ \Psi_L(k_1, \dots, k_{n-1}) = \widetilde{\Psi_L}(\bm k',j) \chi_j (k_j).\] Let $\mathcal{A}_i(B,z)$ be defined as 
    \[ \mathcal{A}_{i,j}(B,z) := \left\{ \bm k' \in \eN^{n-2} : \begin{tabular}{c} $P_j(\bm k') \leqslant b_F (B/z)^2$, \\ 
    $\forall j' \in I \smallsetminus \{i,j\}$, $k_{j'} > z$  \end{tabular}  \right\}. \]It follows that $\boldsymbol{\mathfrak{S}}^{\mathrm{err}}(B,z,a;v;I \smallsetminus \{i\})$ is equal to
  \[  \sum_{\substack{ \bm k' \in \mathcal{A}_{i,j}(B,z) }}\!\!\!\!\!\!\widetilde{\Psi_L}(\bm k',j) {\rho_{F,a}^-(P_j(\bm k');v) \over P_j(\bm k') } \left( \mathfrak{S}_{\chi_j}^{\mathrm{vol}}(y_{P_j(\bm k')},B,a,P_j(\bm k');z;v) - \mathfrak{S}_{\chi_j}^{\mathrm{vol}}(z,B,a,P_j(\bm k');z;v) \right) .\]The conclusion follows as in the case $n = 3$, ignoring the condition $\forall j' \in I \smallsetminus \{i,j\}$, $k_{j'} > z$ in the set $\mathcal{A}_i(B,z)$.
   
\end{proof}

\section{Lower bound II : average of $r_L$ over the values of $F$}\label{section4}

Our goal is to find a sharp lower bound for the quantity $N_{F,L}(B)$ defined by (\ref{NFC0}), when $L/ \eQ$ is abelian of degree $n \geqslant 2$ and $F \in \eZ[s,t]$ is a binary quadratic form which is irreducible over $\eQ$. Recall that using (\ref{corps de classe}), we can identify any irreducible representation $\chi \in \widehat{G}$ with a non-trivial Dirichlet character modulo $q_L \in \eN$, still denoted $\chi$, where $q_L$ is the conductor of $L/\eQ$. Note that the value of $\chi(n)$ only depends on $n$ modulo $q_L$. The following result does not require $F$ to be irreducible over $L$.

\begin{prop}\label{first lower-bound} Let $L/\eQ$ be an abelian number field of degree $n \geqslant 2$, $F \in \eZ[s,t]$ be a binary quadratic form which is irreducible over $\eQ$, and let $W$ be as in $($\ref{choice W}$)$. Assume that there exist $x_0 \in L$ and $(s_0,t_0) \in \eZ^2$ such that $N_{L/\eQ}(x_0) = F(s_0,t_0)$ and the integer $F(s_0,t_0)$ is invertible modulo~$q_L$. There exist two integers $s_1$ and $t_1$ such that \begin{enumerate}
    \item[(i)] the integer $F(s_1,t_1)$ is invertible modulo $W$;
    \item[(ii)] $\forall \chi \in \widehat{G}, \; \chi(F(s_1,t_1)) = 1$;
\end{enumerate} and such that for all $B \geqslant 2$ and $z = B^\eta$ with $\eta \in (0,1)$, we have the estimate
 \[  N_{F,L}(B) \gg_\eta  \sum_{ \substack{(s,t) \in (\eZ\cap [-B,B])^2 \\ 
 \gcd(s,t) = 1 \\
 (s,t) \equiv (s_1,t_1) \bmod W} } {\mu^2(F(s,t)) \over n^{ \omega(F(s,t) , z) }} r_L(F(s,t)).\]
\end{prop}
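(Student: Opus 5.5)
The plan has two parts. First, build $(s_1,t_1)$ from $(s_0,t_0)$ by the Chinese remainder theorem. Second, show that every $(s,t)$ giving a non-zero term in the sum is counted by $N_{F,L}(B)$, and that each such term is $O_\eta(1)$.

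\textbf{Construction of $(s_1,t_1)$.} Write $W=\prod_{p\leqslant w_0}p^{e_p}$ with $e_p=\max(1,v_p(q_L))$.
\begin{itemize}
\item For $p\mid q_L$, impose $(s_1,t_1)\equiv(s_0,t_0)\bmod p^{e_p}$.
\item For $p\mid W$ with $p\nmid q_L$, impose $(s_1,t_1)\equiv(\xi_1,\xi_2)\bmod p^{e_p}$, where $p\nmid F(\xi_1,\xi_2)$.
\end{itemize}
Such $(\xi_1,\xi_2)$ exists when $F$ is primitive. Indeed, a binary quadratic form vanishing on all of $\mathbb{F}_p^2$ has all its coefficients divisible by $p$: test $(1,0)$, $(0,1)$ and $(1,1)$ when $p=2$, and use the degree bound when $p\geqslant 3$. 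If $F$ has a non-trivial content, I would factor it out first (or absorb it into the congruence conditions). This gives (i).

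For (ii), $\chi(F(s_1,t_1))$ depends only on $F(s_1,t_1)\bmod q_L$, so it equals $\chi(m_0)$ with $m_0:=F(s_0,t_0)=N_{L/\eQ}(x_0)$. Write $(x_0)=\mathfrak a\mathfrak b^{-1}$ with $\mathfrak a,\mathfrak b$ integral ideals coprime to $q_L$; this is possible since $\gcd(m_0,q_L)=1$. Then $|m_0|$ is a quotient of products of norms $N\mathfrak p=p^{f_p}$ over unramified primes. By Lemma \ref{detecteurs} and the identification of $\chi(p)$ with the Frobenius at $p$, whose order is $f_p$, we get $\chi(p^{f_p})=1$. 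It remains to deal with the sign of $m_0$. If $L$ is totally real, every $\chi$ is even. If $L$ is imaginary, norms are positive, so $m_0>0$. In both cases $\chi(m_0)=1$.

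\textbf{The lower bound.} Fix $(s,t)$ whose term is non-zero and put $m:=F(s,t)$. Then $m$ is squarefree, $m\equiv F(s_1,t_1)\bmod W$, hence $m$ is coprime to $W$ and thus to $q_L$, and $r_L(m)>0$. So $|m|=N(\mathfrak a)$ for some integral ideal $\mathfrak a$. By (i) of Theorem \ref{th} the ideal $\mathfrak a$ is principal, so $|m|=\pm N_{L/\eQ}(x)$, and we must show $m\in\mathcal N_L$.

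\textbf{Main obstacle: the sign.} If $L$ is imaginary, $N(x)>0$. Moreover $\chi(|m|)=1$, obtained exactly as in the previous step, and $\chi(m)=1$ by (ii) together with the congruence. Hence $\chi(-1)=1$ whenever $m<0$, which is impossible for an imaginary $L$ unless $m>0$. If $L$ is real and $n$ is odd, $N(-x)=-N(x)$ settles the sign. The delicate case is $L$ real with $n$ even. There I would try to transfer the sign from the witness $x_0$: also impose $\mathrm{sign}$-compatibility, either by noting that a real quadratic form $F$ takes a constant sign on the relevant cone and restricting $(s,t)$ to it, or by using a unit of norm $-1$ when one exists. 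This is the step I expect to need the most care, and I would check it against the later use of the proposition.

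\textbf{Weights are bounded.} For squarefree $m$ coprime to $q_L$, multiplicativity gives $r_L(m)=\prod_{p\mid m}r_L(p)\leqslant n^{\omega(m)}$, since $r_L(p)\in\{0,n\}$. Every prime factor of $m$ exceeding $z$ satisfies
\[ \omega(m)-\omega(m,z)\leqslant \frac{\log(b_FB^2)}{\log z}\ll \eta^{-1}. \]
Hence each term is at most $n^{O(1/\eta)}\ll_\eta 1$. Each $(s,t)$ with a non-zero term lies in $[-B,B]^2$ and satisfies $\pi_{F,L}^{-1}(s,t)(\eQ)\neq\emptyset$, so the sum is $\ll_\eta N_{F,L}(B)$.
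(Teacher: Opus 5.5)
Your architecture matches the paper's. You use the CRT to get $(s_1,t_1)\equiv(s_0,t_0)\bmod q_L$ with $F(s_1,t_1)$ a unit mod $W$, compare each term with $\mathds{1}_{\mathcal N_L}(F(s,t))$ pointwise, and bound $\omega(m)-\omega(m,z)\ll 1/\eta$. Your proof of (ii), via $f_p\mid v_p(m_0)$ and the parity of the characters, is a more explicit version of the paper's appeal to Artin reciprocity. One small fix: $(x_0)$ need not be a quotient of ideals coprime to $q_L$, since primes above a ramified $p$ can cancel in the norm, but $f_p\mid v_p(m_0)$ for $p\nmid q_L$ is all you use. Your remark that (i) forces $F$ to be primitive is correct; the paper assumes this silently. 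Your treatment of imaginary $L$ and of odd $n$ is also correct.

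The genuine gap is the step you flag: $L$ totally real of even degree. Neither of your patches works, because the sign that matters is $\epsilon:=m/N_{L/\eQ}(y)$ with $\mathfrak a=(y)$, not the sign of $m$. For $L=\eQ(\sqrt3)$ (class number $1$, fundamental unit $2+\sqrt3$ of norm $+1$), the positive prime $11$ splits, yet $11=-N(1+2\sqrt3)\notin\mathcal N_L$ because $(11,3)_3=-1$. So restricting $(s,t)$ to a cone of constant sign gains nothing, and it would change the sum you must bound. Units of norm $-1$ need not exist either. What you are missing is that you only used the consequence $\chi(m)=1$ of the congruence. When $L$ is real every $\chi$ is even, so this carries no information about $\epsilon$.

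Use the full congruence $m\equiv m_0\pmod{q_L}$ instead.
\begin{itemize}
\item For $p\mid q_L$ one has $m/m_0\in 1+p^{v_p(q_L)}\eZ_p$, and this group consists of local norms from $L_{\mathfrak P}$ since $L\subseteq\eQ(\zeta_{q_L})$. As $m_0$ and $N_{L/\eQ}(y)$ are local norms, so is $\epsilon$.
\item At unramified places $\epsilon=\pm1$ is a unit, hence a local norm.
\item For real $L$ there is nothing to check at $\infty$.
\end{itemize}
If $L$ is cyclic, Hasse's norm theorem gives $\epsilon\in\mathcal N_L$, hence $m\in\mathcal N_L$. For example, with $\eQ(\sqrt3)$ and $m_0=1$, we have $-23\equiv 1\bmod 12$ and $-23=N(2+3\sqrt3)$.

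If $L$ is real, non-cyclic and of even degree, this only places $\epsilon$ in the knot group. Under hypothesis (i) of Theorem \ref{th} that group is generated by the class of $-1$, so you would need $-1\in\mathcal N_L$ as an extra input.

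The paper does not settle this point either. It goes from Lemma \ref{detecteurs} and multiplicativity of $\mathds{1}_{\mathcal N_L}$ directly to $\mu^2\mathds{1}_{\mathcal N_L}(F(s,t))=\mu^2(F(s,t))\,r_L(F(s,t))/n^{\omega(F(s,t))}$. The proof of Lemma \ref{detecteurs} treats ``norm of an ideal'' as ``norm of an element'', which discards exactly this sign; the $\eQ(\sqrt3)$, $p=11$ example contradicts that lemma as stated. So you cannot close the gap by citing the paper's route; the local--global argument above is what is needed.
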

\begin{proof}
    The image of $F(s_0,t_0)$ in the idèle class group is in the kernel of the Artin reciprocity map (see (\ref{corps de classe})) and therefore $\chi(F(s_0,t_0))=1$ for any $\chi \in \widehat{G}$. Since $F(s_0,t_0)$ is invertible modulo $q_L$, the Chinese remainder theorem and the choice of $W$ as in (\ref{choice W}) ensure that there exists $(s_1, t_1) \in \eZ^2$ such that $(s_1,t_1) \equiv (s_0,t_0) \bmod q_L$ and $F(s_1,t_1)$ is invertible modulo $W$. In particular, $F(s_1,t_1) \equiv F(s_0,t_0) \bmod q_L$ so $\chi(F(s_1,t_1)) = 1$. Now we write 
    \[ N_{F,L}(B) \geqslant  \sum_{ \substack{(s,t) \in (\eZ\cap [-B,B])^2 \\ 
 \gcd(s,t) = 1 \\
 (s,t) \equiv (s_1,t_1) \bmod W} } \mu^2(F(s,t) ) \mathds{1}_{\mathcal{N}_L} (F(s,t)). \]Lemma \ref{detecteurs} ensures that 
 \begin{align*} \mu^2(F(s,t)) \mathds{1}_{\mathcal{N}_L} (F(s,t)) & = \mu^2 (F(s,t))\prod_{p \mid F(s,t)} {(1 + \chi(p) + \cdots + \chi^{n-1}(p)) \over n} \\ & = {\mu^2(F(s,t)) \over n^{\omega(F(s,t))}}r_L(F(s,t)) .\end{align*}To conclude, we use that for our choice $z = B^\eta$ and $(s,t) \in [-B,B]^2$, we have \[\# \{ p \mid F(s,t) : p > z \} \leqslant { \log (F(s,t)) \over \log z}\ll_\eta 1\]so that
 \[  {1 \over n^{\omega(F(s,t))}} = {1 \over n^{\omega(F(s,t),z)} } \times {1 \over n^{\# \{ p \mid F(s,t) : p > z\} }} \gg_\eta {1 \over n^{\omega(F(s,t),z)}}.\]
\end{proof}

For $d \in \eN$ coprime to $W$ and for $(s_1,t_1)$ as in Proposition \ref{first lower-bound}, we let
\[ \label{def M} \tag{3.1} M_d(B) =  \sum_{ \substack{(s,t) \in (\eZ\cap [-B,B])^2 \\ 
 \gcd(s,t) = 1 \\ 
 (s,t) \equiv (s_1,t_1) \bmod W \\
 d \mid F(s,t) } } \mu^2(F(s,t)) r_L(F(s,t)).\]Let $(\lambda_d^-)_{d \geqslant 1}$ be such that \begin{enumerate}
    \item[(i)] we have $\lambda_1^- = 1$ and $\displaystyle \sum_{d \mid k} \lambda_d^- \leqslant 0$ for all $k > 1$,
    \item[(ii)] there exists $y >  0$ such that $\lambda_d^- = 0$ whenever $d > y$.
\end{enumerate}
The sequence $(\lambda_d^-)$ corresponds to a truncated version of the Möbius function, enabling to have a small support in $[1,y]$. An explicit expression for $\lambda_d^-$ can be found in \cite[§6]{IK}.

\begin{prop}\label{mise en place sieve}
 Let $L/\eQ$ be an abelian number field of degree $n \geqslant 2$, $F \in \eZ[s,t]$ be a binary quadratic form which is irreducible over $\eQ$, and let $W$ be as in $($\ref{choice W}$)$. For $B \geqslant 2$ and $z = B^\eta$ with $\eta \in (0,1)$, and for $\varepsilon_0> 0$, we have 
    \[\sum_{ \substack{(s,t) \in (\eZ\cap [-B,B])^2 \\ 
 \gcd(s,t) = 1 \\
 (s,t) \equiv (s_1,t_1) \bmod W} } {\mu^2(F(s,t)) \over n^{ \omega(F(s,t) , z) }} \geqslant\sum_{\substack{d \leqslant B^\varepsilon \\ p \mid d \Rightarrow p \leqslant z \\ \gcd(d,W) = 1}} \lambda_d^- \left(1- {1 \over n} \right)^{\omega(d)} M_d(B),\]where $(\lambda_d^-)$ is as above with parameter $y = B^{\varepsilon_0}$.
\end{prop}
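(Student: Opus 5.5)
We prove the inequality pointwise in $(s,t)$ and then sum. The weight $r_L(F(s,t))$ appears in $M_d(B)$ (see (\ref{def M})) and in Proposition \ref{first lower-bound}, so we read the left-hand side with the same weight, namely
\[ \sum_{(s,t)} {\mu^2(F(s,t)) \over n^{\omega(F(s,t),z)}}\, r_L(F(s,t)), \]
which is the quantity produced by Proposition \ref{first lower-bound}. The only property of this weight we use is $\mu^2 r_L\geqslant 0$.

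\emph{Reduction.} Fix $(s,t)$ in the range of summation with $\mu^2(F(s,t))\neq 0$. Since $(s,t)\equiv(s_1,t_1)\bmod W$, we have $F(s,t)\equiv F(s_1,t_1)\bmod W$, which is invertible modulo $W$. Let
\[ k:=\prod_{p\mid F(s,t),\; p\leqslant z} p. \]
Then $k$ is squarefree, coprime to $W$, and satisfies $\omega(k)=\omega(F(s,t),z)$. Moreover, the divisors $d$ of $k$ are exactly the integers with $p\mid d\Rightarrow p\leqslant z$, $\gcd(d,W)=1$ and $d\mid F(s,t)$. Put $g(d):=(1-\tfrac1n)^{\omega(d)}$, a multiplicative function with $0\leqslant g(p)\leqslant 1$ and $1-g(p)=1/n$. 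It therefore suffices to prove
\[ \sum_{d\mid k}\lambda_d^-\,g(d)\leqslant \prod_{p\mid k}(1-g(p))=n^{-\omega(k)} . \]
Once this holds, we multiply by $\mu^2(F(s,t))r_L(F(s,t))\geqslant 0$, sum over $(s,t)$, and interchange the sums. The inner sum over $(s,t)$ with $d\mid F(s,t)$ is exactly $M_d(B)$. The restriction $d\leqslant B^{\varepsilon_0}$ is automatic from property (ii) of $(\lambda_d^-)$ with $y=B^{\varepsilon_0}$, so the range $d\leqslant B^{\varepsilon}$ in the statement is read as $d\leqslant B^{\varepsilon_0}$.

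\emph{Key identity.} We claim that for squarefree $k$,
\[ \sum_{d\mid k}\lambda_d^- g(d)=\sum_{m\mid k} g(m)\,(1-g)(k/m)\sum_{d\mid m}\lambda_d^-, \]
where $(1-g)(e):=\prod_{p\mid e}(1-g(p))$. To verify it, interchange the sums on the right. For fixed $d\mid k$, the coefficient of $\lambda_d^-$ is
\[ \sum_{d\mid m\mid k} g(m)(1-g)(k/m)=g(d)\sum_{m'\mid k/d}g(m')(1-g)\big((k/d)/m'\big)=g(d)\prod_{p\mid k/d}\big(g(p)+1-g(p)\big)=g(d), \]
by multiplicativity, since $k$ is squarefree.

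\emph{Conclusion.} In the identity, the term $m=1$ equals $\lambda_1^-(1-g)(k)=n^{-\omega(k)}$, using $\lambda_1^-=1$. Every term with $m>1$ is $\leqslant 0$, because $g(m)\geqslant 0$, $(1-g)(k/m)\geqslant 0$, and $\sum_{d\mid m}\lambda_d^-\leqslant 0$ by property (i). This gives the pointwise inequality, and summing as described yields the proposition.

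The only genuinely non-routine step is the key identity: the weights $\lambda_d^-$ satisfy the lower-bound property only for the unweighted divisor sum, and the identity transfers that property to the multiplicative weight $g$. Everything else is bookkeeping of the ranges of $d$.
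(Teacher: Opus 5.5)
Your proof is correct and is essentially the paper's argument. The paper simply invokes \cite[lemma 4.1]{Sofos} with the multiplicative function $f(p^m)=1/n$ for $p\leqslant z$ and $f(p^m)=1$ otherwise. Your identity $\sum_{d\mid k}\lambda_d^- g(d)=\sum_{m\mid k} g(m)(1-g)(k/m)\sum_{d\mid m}\lambda_d^-$, followed by discarding the non-positive terms with $m>1$, is exactly the standard proof of that lemma, so you have made the cited step self-contained.

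Your two readings of the statement are the intended ones: the weight $r_L(F(s,t))$ on the left-hand side, as produced by Proposition \ref{first lower-bound}, and the range $d\leqslant B^{\varepsilon_0}$. Both are consistent with the definition of $M_d(B)$ and with how the proposition is used in Section \ref{section5}.
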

\begin{proof}
We apply \cite[lemma 4.1]{Sofos} for a single multiplicative function $f$ defined by 
\[ f(p^m) := \begin{cases}
    {1 \over n} \text{ if } p \leqslant z \\
    1 \text{ otherwise}
\end{cases}\]for all prime $p$ and $m \in \eN$. The coprimality condition $\gcd(d, W)= 1$ is inherited from the fact that $F(s_1,t_1) $ is invertible modulo $W$.
\end{proof}

In section \ref{section5}, we will apply the fundamental lemma of sieve theory with the parameters $y = B^{\varepsilon_0}$ and $z = B^\eta$ for some $\eta < \varepsilon_0$ in $(0,1)$. Anticipating this application of the sieve, we set
\[ \label{choice eps0 eta} \tag{3.2} \varepsilon_0 := {1 \over 8 n^2}\quad  \text{ and } \quad \eta := {1 \over 16 n^2}. \]

We focus on estimating the quantity $M_d(B)$ when $d$ is square-free coprime to $W$. We define
\[ \label{def S_d}\tag{3.3} S_{d}(B,m):=\sum_{ \substack{(s,t) \in (\eZ\cap [-B,B])^2 \\ 
 \gcd(s,t) = 1 \\
 (s,t) \equiv (s_1,t_1) \bmod W \\ [d,m^2] \mid F(s,t)} } r_L(F(s,t))\]

\begin{prop}\label{lemme M_dY}
Let $\varepsilon > 0$. For all $B \geqslant 2$, $Y \in \eR \cap (1, B^{1/2})$ and $d$ square-free coprime to~$W$, we have \[ M_d(B) = \sum_{\substack{m \leqslant Y  \\ \gcd(m,W) = 1}} \mu(m)  S_{d}(B,m) + O_{\varepsilon} \left( {B^{2 + \varepsilon} \over Y} \right).\]
\end{prop}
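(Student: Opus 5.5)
We detect the square-free condition with the classical identity $\mu^2(k)=\sum_{m^2\mid k}\mu(m)$, applied to $k=F(s,t)$. Since $d$ is square-free and $(s,t)\equiv(s_1,t_1)\bmod W$, with $F(s_1,t_1)$ invertible modulo $W$ by Proposition \ref{first lower-bound}, every divisor of $F(s,t)$ is coprime to $W$. Hence only $m$ with $\gcd(m,W)=1$ occur. The conditions $d\mid F(s,t)$ and $m^2\mid F(s,t)$ combine into $[d,m^2]\mid F(s,t)$. Exchanging the order of summation gives
\[ M_d(B)=\sum_{\substack{m\geqslant 1\\ \gcd(m,W)=1}}\mu(m)\,S_d(B,m), \]
where the sum over $m$ is finite because $m^2\leqslant |F(s,t)|\leqslant b_FB^2$, so $m\ll B$. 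The terms with $m\leqslant Y$ give the main term. It remains to show that the tail $Y<m\ll B$ is $O_\varepsilon(B^{2+\varepsilon}/Y)$.

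For the tail we use the divisor bound $r_L(k)\leqslant \tau_n(k)\ll_\varepsilon k^{\varepsilon}\ll B^{\varepsilon}$ for $0<|F(s,t)|\ll B^2$. Here $F(s,t)\neq 0$ because $F$ is irreducible over $\eQ$ and $\gcd(s,t)=1$. We then drop the conditions involving $d$, $W$ and coprimality, so the tail is bounded by
\[ \ll_\varepsilon B^{\varepsilon}\sum_{Y<m\ll B}\#\{(s,t)\in[-B,B]^2:\gcd(s,t)=1,\ m^2\mid F(s,t)\}. \]
For fixed $m$ we count pairs $(s,t)$ via residues modulo $m^2$. The number of classes with $\gcd(s,t,m)=1$ and $F(s,t)\equiv 0\bmod m^2$ is $\ll \rho_F(m^2)$ restricted to primitive pairs. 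By multiplicativity and Hensel (a coprime pair reduces to $\xi=s/t$ or $t/s$ with $F(\xi,1)\equiv0$), this is $\ll_\varepsilon m^{2+\varepsilon}$, and more precisely the count is $\asymp m^2\rho_F^-(m^2)$-type quantities of size $m^{2+\varepsilon}$.

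The main obstacle is the range $m\leqslant B$, where $m^2$ may exceed $B$, so that counting residue classes in a box of side $B<m^2$ costs an extra factor. Instead, for each root $\xi$ modulo $m^2$ with $s\equiv \xi t \bmod m^2$, the admissible $(s,t)$ form a lattice of determinant $m^2$. A primitive point count in $[-B,B]^2$ gives $\ll B^2/m^2+B/\lambda_1+1$, where $\lambda_1$ is the first successive minimum. Since the points counted are primitive with $m^2\mid F(s,t)$ and $F$ is irreducible, $\lambda_1\gg$ (the nonzero value $|F(v)|\geqslant m^2$ forces $\|v\|\gg m$). The count is therefore $\ll B^2/m^2+B/m+1$. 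Multiplying by the $\ll_\varepsilon m^{\varepsilon}$ roots and summing over $Y<m\ll B$ gives
\[ \ll_\varepsilon B^{\varepsilon}\sum_{Y<m\ll B}\Big(\frac{B^2}{m^{2-\varepsilon}}+\frac{B}{m^{1-\varepsilon}}+m^\varepsilon\Big)\ll B^{2+2\varepsilon}/Y+B^{1+2\varepsilon}, \]
which is $O_\varepsilon(B^{2+\varepsilon}/Y)$ after rescaling $\varepsilon$, since $Y<B^{1/2}$.
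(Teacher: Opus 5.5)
Your proposal is correct and is essentially the paper's argument: expand $\mu^2(F(s,t))=\sum_{m^2\mid F(s,t)}\mu(m)$, bound $r_L(F(s,t))\ll_\varepsilon B^{\varepsilon}$ in the tail $m>Y$, and cover $\{(s,t): m^2\mid F(s,t)\}$ by $\ll_\varepsilon m^{\varepsilon}$ lattices of determinant $m^2$. Your explicit remark that every nonzero vector $v$ of such a lattice satisfies $m^2\mid F(v)\neq 0$, hence $\lambda_1\gg m$, is exactly what justifies the paper's per-lattice bound $\ll B^2/m^2+1$ (equivalent to your $B^2/m^2+B/m+1$), and the residue-class paragraph before it can simply be dropped.
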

\begin{proof}
    We use the identity
    
    \[ \mu^2(F(s,t)) = \!\!\!\!\! \sum_{\substack{m^2 \mid F(s,t)}} \!\!\! \mu(m) \]in order to write

    \[ M_d(B) = \sum_{\substack{m \in F([-B,B]^2) \\ \gcd(m,W) = 1}} \mu(m)  S_{d}(B,m).   \]Now, we deal with the contribution coming from $m > Y$, for $Y \leqslant B^{1/2}$. Since by (\ref{r_L convolee}) we have the inequality
    \[ r_L(F(s,t)) \ll_\varepsilon B^\varepsilon,\] we can bound this contribution by 
    \[ R(B) := B^\varepsilon \!\!\!\!\!\!\!\!\!\! \sum_{Y < m \leqslant (b_F)^{1/2} B} \# \{ (s,t) \in (\eZ \cap [-B,B])^2 :  m^2  | F(s,t) \}.\]Then, we use that the set $ \{ (s,t) \in (\eZ \cap [-B,B])^2 :  m^2  | F(s,t) \}$ is contained in a union of lattices of discriminant $m^2$, whose number is $\ll_\varepsilon B^\varepsilon$. Therefore, we have 
    \[  \# \{ (s,t) \in (\eZ \cap [-B,B])^2 :  m^2  | F(s,t) \} \ll_\varepsilon B^{\varepsilon} \left( {B^2 \over m^2} + 1 \right), \]leading to 
    \[ R(B) \ll_\varepsilon {B^{2 + \varepsilon} \over Y} \]since $Y \leqslant B^{1/2}$. 
\end{proof}

We now provide an estimate for the quantity $S_d(B,m)$ defined by (\ref{def S_d}) when $m~\leqslant~Y~\leqslant~B^{1/2}$. To this end, we use a method inspired by \cite[§4]{Lartaux} and relying on the technical results from section~\ref{section3}. Applying these lemmas require $F$ to be irreducible over $L$. We start by introducing some notation.\\ 

Let $v_0 \in \mathcal{U}$ be the multiplicative function defined by 
\[\tag{3.4}\label{v_0} v_0(k) := \prod_{p \mid k} \left( 1 + {1 \over p} \right)^{-1}.\]For any bounded region $\mathcal{R} \subset \eR^2$, we introduce the set

\[ \tag{3.5} \label{def Lambda}\Lambda^* (\mathcal{R},k) :=  \left\{ (s,t) \in \eZ^2 \cap \mathcal{R} : \begin{tabular}{c}  
 $\gcd(s,t) = 1$ \\ 
 $(s,t) \equiv (s_1,t_1) \bmod W$ \\
 $k \mid F(s,t)$ \end{tabular} \right\},\]where $(s_1,t_1)$ are as in Proposition \ref{first lower-bound}. Estimates for the number of elements in $\Lambda^* (\mathcal{R}, k)$ are well-known since the pioneering work of Daniel \cite{Daniel}, which has inspired \cite[§5.3]{Sofos}. If $k$ is coprime to $W$, and if $(s,t) \in \eZ^2 $ is such that $\gcd(s,t) = 1$ and $F(s,t) =0$, we have $\gcd(k,t) = 1$. We deduce the equality \

\[ \left\{ (s,t)\in \eZ^2 : \begin{tabular}{c} $\gcd(s,t) = 1$ \\ $F(s,t) \equiv 0 \bmod k$  \end{tabular} \right\} = \bigsqcup_{\substack{\xi \bmod k \\ F(\xi, 1) \equiv 0 \bmod k}} \left\{ (s,t) \in \eZ^2 : \begin{tabular}{c} $\gcd(s,t) = 1$ \\ $s \equiv \xi t \bmod k$  \end{tabular} \right\}.  \]Therefore, if $\mathcal{R} = \mathcal{R}(B,z)$ is of the form (\ref{region}), then \cite[lemma 5.3 (2)]{Sofos} provides a constant $c' > 0$ depending on $W$ such that 
 \[\label{estim réseaux} \tag{3.6}\# \Lambda^* (\mathcal{R}(B,z), k)  = c' {\mathrm{vol(\mathcal{R}(B,z)) } \rho_F^-(k)v_0(k)\over k} + O\left( B \log B \sum_{ \substack{\xi \bmod k \\ F(\xi, 1) \equiv 0 \bmod k}}  { 1\over  \lambda_1(k,\xi)} \right),\]where $\lambda_1(k,\xi)$ denotes the first successive minimum of the lattice 
 \[ \{ (s,t) \in \eZ^2 : s \equiv \xi t \bmod k \}.\]Minkowski's theorem \cite[chap VIII. (12) and (13) p. 203]{Cassels} yields $\lambda_1(k,\xi) \ll k^{1/2}$, hence 
 \[ \label{bound successive min} \tag{3.7} \sum_{k \leqslant y} \sum_{\substack{\xi \bmod k \\ F(\xi, 1) \equiv 0 \bmod k}} {1 \over \lambda_1(k,\xi) }   \leqslant \sum_{\substack{ \bm v \in \eZ^2 \\ \bm v \neq 0  \\ ||\bm v || \leqslant y^{1/2}} } {\tau(|F(\bm v)|) \over || \bm v ||} \ll_\varepsilon  y^{{1 \over 2} + \varepsilon}\quad \quad (y > 0).\]Finally, we recall that $\sigma_{k}\left( a\right)$ is defined by (\ref{vartheta}), and for $d, m \in \eN$ we let 
 \[\label{beta(d,m)} \tag{3.8} \beta(d,m) := \rho_F^- \left( dm\right) v_0(dm) \gcd(d,m)  ,\]and
 \[\label{alpha(d,m)} \tag{3.9} \alpha(d,m) := \beta(d,m) \sigma_1\left( {dm^2 \over \gcd(d,m^2) } \right).  \]

\begin{prop}\label{key prop}
   Let $L/\eQ$ be an abelian number field of degree $n \geqslant 2$, $F \in \eZ[s,t]$ be a binary quadratic form which is irreducible over $L$, and let $W$ and $w_0$ be as in~$($\ref{choice W}$)$. Let $\varepsilon \in \left( 0, {1 \over n^2} \right)$. Enlarging $w_0$ if necessary, for $B \geqslant 2$, $m \leqslant B^{1/2} $, $d \leqslant B^{{1 \over 8n^2}}$ satisfying $\mu^2(d) = \mu^2(m) = \gcd(dm,W)=1$, the quantity $S_d (B, m) $ is equal to

\begin{align*} c_0 u_{L,F}(v_0)(dm)  {\alpha(d,m)  B^2 \over  dm^2 }+ O_\varepsilon\left( d^{1 + \varepsilon} m^\varepsilon   B^{ 2 - {1\over n^2} + \varepsilon} \right),
\end{align*}with $c_0 := n c' c_{F,L}(v_0) > 0$, where $\alpha(d,m)$ is given by (\ref{alpha(d,m)}), $c'$ comes from (\ref{estim réseaux}), and where $c_{L,F}(v_0) > 0$ and $u_{L,F}(v_0) \in \mathcal{U}$ are as in (\ref{c_{F,L}(v)}) and (\ref{u_{F,L}(v)}).

\end{prop}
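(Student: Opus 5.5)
We open $r_L(F(s,t))$ via (\ref{r_L convolee}) as a sum over $\bm k\in\eN^{n-1}$ with $k_1\cdots k_{n-1}\mid F(s,t)$, weighted by $\Psi_L(\bm k)$, and swap the order of summation. Write $a:=[d,m^2]=dm^2/\gcd(d,m^2)$. The conditions $a\mid F(s,t)$ and $k_1\cdots k_{n-1}\mid F(s,t)$ merge into the single condition $\mathrm{lcm}(a,k)\mid F(s,t)$, where $k=k_1\cdots k_{n-1}$. This gives
\[ S_d(B,m)=\sum_{\bm k\in\eN^{n-1}}\Psi_L(\bm k)\,\#\Lambda^*\bigl([-B,B]^2,\mathrm{lcm}(a,k)\bigr). \]
Since $F(s_1,t_1)$ is invertible modulo $W$, only $k$ coprime to $W$ contribute. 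Because $|F(s,t)|\leqslant b_FB^2$, only $k\leqslant b_FB^2$ matter, but the counting error for large $k$ is too big to handle directly.

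\textbf{Hyperbola/Dirichlet trick.} As in \cite[§4]{Lartaux}, we exploit the symmetry $k\mapsto F(s,t)/k$. Each divisor $k$ with $k>\sqrt{|F|}$ is paired with its complementary divisor. Applying this to one of the $k_i$ at a time, we reduce to ranges where some coordinate $k_j\leqslant z$ with $z=B^{\theta}$ small, or where $F(s,t)/k$ is large. This is the reason $\mathrm{vol}(\mathcal{R}(B,zk))$ appears in Lemma \ref{estim final avec vol} and $\boldsymbol{\mathfrak{S}}^{\mathrm{err}}$ appears in Lemma \ref{estim cal E}. Concretely, the complementary divisor $k'=F(s,t)/k$ is small exactly when $|F(s,t)|\leqslant k\,k'$, which is a region of the form $\mathcal{R}(B,\cdot)$. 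This produces the factor $n$ in $c_0$: the trivial-character contributions from each swapped coordinate combine.

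\textbf{Lattice counting on each piece.} On each piece with $\mathrm{lcm}(a,k)\leqslant B^{1-\delta}$, we apply (\ref{estim réseaux}) with modulus $\mathrm{lcm}(a,k)$. The main term is
\[ c'\,\mathrm{vol}\,\rho_F^-(\mathrm{lcm}(a,k))\,v_0(\mathrm{lcm}(a,k))/\mathrm{lcm}(a,k). \]
Using multiplicativity and squarefreeness of $d,m$, we factor this as
\[ \frac{\beta(d,m)}{dm^2}\cdot\frac{\rho_{F,a}^-(k;v_0)\gcd(k,a)}{k}. \]
Summing over $\bm k$ then gives exactly $\boldsymbol{\mathfrak{S}}^\#$ (resp.\ its volume-weighted version). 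Lemma \ref{Bound error multi D 1} and Lemma \ref{estim final avec vol}, applied with $v=v_0$, evaluate these sums to $c_{F,L}(v_0)u_{F,L}(v_0)(a)\sigma_1(a)$ times $4B^2$ or the volume. Here $u_{F,L}(v_0)(a)=u_{F,L}(v_0)(dm)$, since $u\in\mathcal{U}$ depends only on the radical. The tail and cross terms are controlled by Lemma \ref{estim cal E} and inclusion–exclusion over the sets $J$.

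\textbf{Error terms and the main obstacle.} The lattice error term in (\ref{estim réseaux}) is summed using (\ref{bound successive min}), giving about $B\log B\,(a\,y)^{1/2+\varepsilon}$ for moduli up to $a y$. With $d\leqslant B^{1/(8n^2)}$, $m\leqslant B^{1/2}$ and cut-off $y=B^{\theta}$, this is admissible only if $y$ is a small power of $B$. The Dirichlet-series errors, in contrast, are of size $y^{-1/(4n)}$ and require $y$ large. Choosing $\theta$ of order $1/n$ balances these two and yields the saving $B^{-1/n^2}$.

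The main obstacle is the range with large $m$. There $m^2$ is close to $B$, and the lattice error $B\,(m^2y)^{1/2}$ is not small compared with $B^2/m^2$. I would handle this by using the error term relative to $B^{2-1/n^2}$ rather than to the main term, carrying the $m^\varepsilon$ dependence through, and if necessary splitting $m\leqslant B^{1/2-\delta}$ separately. I expect the bookkeeping of the swapped coordinates (getting the constant exactly $n c'c_{F,L}(v_0)$ via $\sigma_1(a)$) to be the most delicate step.
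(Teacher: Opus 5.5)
Your skeleton matches the paper's proof: expand $r_L$ via (\ref{r_L convolee}), count with (\ref{estim réseaux}), cut each $k_i$ at $z$, switch divisors as in \cite{Lartaux} with inclusion--exclusion, and evaluate the pieces with Lemmas \ref{Bound error multi D 1}, \ref{estim final avec vol} and \ref{estim cal E}. However, the remainder analysis has a genuine gap exactly at the ``main obstacle'' you flag, and your suggested patches do not close it.

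You control the lattice remainder through the full modulus $[a,k]$, obtaining roughly $B\log B\,(ay)^{1/2+\varepsilon}$. But $a=[d,m^2]$ can be as large as $B^{1+1/(8n^2)}$. Moreover, the switching unavoidably creates a $k$-sum of length $\max(z^{n-1},b_FB^2/z)\geqslant B^{2-2/n}$: after the switch, $k$ runs up to $b_FB^2/z$, not up to a small power of $B$. Your bound is then of size $B^{5/2-1/n+1/(16n^2)}$, which exceeds $B^2$ for every $n\geqslant 2$. Applying (\ref{estim réseaux}) only when $\mathrm{lcm}(a,k)\leqslant B^{1-\delta}$ just throws away the large $m$, since $a$ alone may exceed $B$. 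Likewise, splitting off $m>B^{1/2-\delta}$ only proves a weaker statement than the one claimed uniformly for $m\leqslant B^{1/2}$.

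The missing observation is the one the paper uses. For a root $\xi \bmod [k,d,m^2]$, the lattice $\{(s,t): s\equiv\xi t\bmod [k,d,m^2]\}$ is contained in $\{(s,t): s\equiv\xi t\bmod k\}$, so $\lambda_1([k,d,m^2],\xi)\geqslant\lambda_1(k,\xi)$. Hence the remainder depends on $d,m$ only through the number of lifts of roots, which is $\ll (dm)^\varepsilon$. Then (\ref{bound successive min}) over $k\leqslant z^{n-1}$, respectively $k\leqslant b_FB^2/z$, gives $B^{2-1/n+\varepsilon}$ in both ranges. This is why $z=B^{2/n}$ is chosen: it equalises $z^{n-1}$ and $B^2/z$. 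The power saving then comes separately from the $z^{-1/(4n)}$-type errors in the Dirichlet-series lemmas, not from balancing those against the lattice errors.

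You also omit the arithmetic input that makes the switch legitimate for $r_L$. The individual character weights are not symmetric under $k_i\mapsto k_i':=F(s,t)/(k_1\cdots k_{n-1})$. The switch works only because $\chi(F(s,t))=1$ for every $\chi\in\widehat G$ on the class $(s,t)\equiv(s_1,t_1)\bmod W$ (Proposition \ref{first lower-bound}(ii)). This gives $\Psi_L(\bm k)=\chi_i^{-1}(k_i')\prod_{\ell\neq i}\chi_i^{-1}(k_\ell)\chi_\ell(k_\ell)$. Since multiplication by $\chi_i^{-1}$ maps $\widehat G\smallsetminus\{\chi_i\}$ bijectively onto $\widehat G\smallsetminus\{1\}$, the switched sum again carries the weight $\Psi_L$ after relabelling, as in (\ref{sym1})--(\ref{sym2}).

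Consequently, each of the $n-1$ single-switch pieces (those with $k_i>z$) produces the same main term as the box piece $\bm k\in[1,z]^{n-1}$. The factor $n$ in $c_0$ is therefore $1+(n-1)$ identical contributions, not a combination of ``trivial-character contributions''. The pieces where at least two coordinates exceed $z$ are error terms by Lemma \ref{estim cal E}.
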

\begin{proof}Combining equality (\ref{r_L convolee}) with (\ref{def S_d}) provides 
\[ S_d(B,m)=\!\!\!\!\!\! \sum_{\substack{\bm k \in \eN^{n-1} \\ \gcd(k_1\cdots k_{n-1},  W) = 1}} \!\!\!\!\!\!\!\! \Psi_L(\bm k) \# \Lambda^*([-B,B]^2,[k_1 \dots k_{n-1} , d, m^2 ]),   \]and (\ref{estim réseaux}) provides an estimate for $\# \Lambda^*([-B,B]^2,[k_1 \dots k_{n-1} , d, m^2])$. We now isolate the dependence on $\bm k$ by writing 
\[ [k_1 \cdots k_{n-1}, d, m^2] = { k_1 \cdots k_{n-1} [d, m^2] \over \gcd( k_1\cdots k_{n-1} , [d,m^2] )} =  { k_1 \cdots k_{n-1} dm^2\over \gcd(d,m) \gcd \left( k_1\cdots k_{n-1} , {dm^2 \over \gcd(d,m)}\right)},   \]where we simplified $\gcd(d,m^2) = \gcd(d,m)$ using $\mu^2(d) = 1$. Using the fact that $v_0 \in \mathcal{U}$, we also write 
\[ v_0([k_1 \cdots k_{n-1}, d,m^2]) = v_0(dm) v_0(k_1 \cdots k_{n-1},dm).\]Moreover, since $\gcd(dm,W) = 1$ with $W$ such that $\rho_F^-(p^\nu) = \rho_F^-(p)$ for all $\nu \geqslant 1$ and for all $p \nmid W$, we write 

\[ \rho_F^-([k_1 \cdots k_{n-1},d,m^2]) = \rho_F^- \left( dm \right)  \rho_F^- \left( k_1 \cdots k_{n-1}, dm \right). \]Therefore, (\ref{estim réseaux}) implies that $ \# \Lambda^*([-B,B]^2,[k_1 \dots k_{n-1} , d, m^2])$ is equal to

\begin{align*}   4c'{\beta(d,m) \over dm^2} &{ \rho_{F,dm}^-(k_1\cdots k_{n-1};v_0) \over k_1 \cdots k_{n-1}} \gcd\left( k_1\cdots k_{n-1} , {d m^2 \over\gcd(d,m)} \right) B^2 \\
& + O_\varepsilon \left( B \log B \!\!\!\!\!\!\!\!\! \sum_{\substack{\xi \bmod k_1 \cdots k_{n-1} \\ F(\xi, 1) \equiv 0 \bmod k_1 \cdots k_{n-1}}} \!\!\!\!\!\! {1 \over \lambda_1(k_1\cdots k_{n-1},\xi)} \right), \end{align*}where $\rho_{F,a}^-(\cdot \; ; v_0)$ is defined by (\ref{def func}), $\beta(d,m)$ is given by (\ref{beta(d,m)}), and where we used the inequality $\lambda_1([k_1\cdots k_{n-1},d,m^2],\xi) \geqslant \lambda_1(k_1\cdots k_{n-1},\xi)$ which can be inferred from $k_1 \cdots k_{n-1} \mid [k_1 \cdots k_{n-1}, d,m^2]$. To deal with the error term and gain a small power of~$B$, we cut the sum over $\bm k$ into smaller contributions and we apply a symmetry argument inspired by Lartaux \cite[§4]{Lartaux}. We introduce the parameter
\[ z := B^{2/n}\]and we write
\[ \label{cut s} \tag{3.10}S_d (B, m)  = S_d^{(1)} (B, m)  + S_d^{(2)} (B, m), \]where
\[ S_d^{(1)} (B, m) :=  \sum_{\substack{\bm k \in [1,z]^{n-1} \\ \gcd(k_1\cdots k_{n-1},  W) = 1}} \!\!\!\!\!\!\!\! \Psi_L(\bm k) \# \Lambda^*([-B,B]^2,[k_1 \dots k_{n-1} , d, m^2 ])  \]and 

\[ S_d^{(2)} (B, m) :=  \sum_{\substack{\bm k \notin [1,z]^{n-1} \\ \gcd(k_1\cdots k_{n-1},  W) = 1}} \!\!\!\!\!\!\!\! \Psi_L(\bm k) \# \Lambda^*([-B,B]^2,[k_1 \dots k_{n-1} , d, m^2 ]).  \]Applying (\ref{estim réseaux}) and (\ref{bound successive min}) provides a constant $c' > 0$ depending on $W$ such that $S_d^{(1)}(B,m)$ is equal to
    \begin{align*} & c' { \beta(d,m) B^2 \over dm^2 } \boldsymbol{\mathfrak{S}}^\# \left(  [1,z]^{n-1}, {dm^2 \over \gcd(d,m)};  v\right)  + \; O_\varepsilon \left(   z^{{n- 1 \over 2} + \varepsilon} B \log B \right),
\end{align*}where $\mathfrak{S}^\# $ is as in (\ref{frac S diese}). Now, our choice of $z$ ensures that the error term obtained above is $\ll_\varepsilon B^{ 2 - {1\over n} + \varepsilon}$ which is admissible in view of the result we are aiming for. Corollary \ref{lemme central general} and Lemma~\ref{Bound error multi D 1}, when applied to $v = v_0 \in \mathcal{U}$, finally yield
\begin{align*} \label{estim s1} \tag{3.11}S_d^{(1)} (B,m)= \; &  c' c_{F,L}(v_0) u_{F,L}(v_0)(dm)   {\alpha(d,m) B^2 \over  dm^2 }  + O_\varepsilon\left( d^{1 + \varepsilon} m^\varepsilon B^{ 2 - {1\over n^2} + \varepsilon} \right) , 
\end{align*}where $c' > 0$, and $c_{F,L}(v_0)$, $u_{F,L}(v_0)$ are as in (\ref{c_{F,L}(v)}) and (\ref{u_{F,L}(v)}), and $\sigma_1$ is defined by (\ref{vartheta}).\\

We now deal with $S_d^{(2)}(B,m)$. Using the inclusion-exclusion principle as in the proof of Lemma \ref{Bound error multi D 1}, we write
\begin{align*} \tag{3.12} \label{cut s2}
    S_d^{(2)}(B,m) = \sum_{i = 1}^{n-1} S_{d,i}^{(2)} (B,m)   + O \left( \sum_{\substack{I \subseteq \{1, \dots, n-1\} \\ \# I \geqslant 2  }} \left| \mathcal{E}_I(B) \right| \right),
\end{align*}
where 
\[ \label{s2i} \tag{3.13}S_{d,i}^{(2)} (B,m) : =\!\!\!\!\!\!\!\! \sum_{\substack{ \bm k \in \eN^{n-1} \\ z < k_i \\ \gcd(k_1\cdots k_{n-1},  W) = 1}}  \!\!\!\!\!\!\!\!\!\!\!\! \Psi_L(\bm k) \# \Lambda^*([-B,B]^2,[k_1 \dots k_{n-1} , d, m^2 ])\]and
\[ \mathcal{E}_I(B) := \sum_{\substack{ \bm k \in \eN^{n-1} \\
   \forall i \in I, \; k_i > z\\ \gcd(k_1\cdots k_{n-1}, W)= 1}}  \!\!\!\!\!\!\!\!\!\!\!\! \Psi_L(\bm k) \# \Lambda^*([-B,B]^2,[k_1 \dots k_{n-1} , d, m^2 ]). \]Note that if $n = 2$ the sum over $I$ is empty. Since $(s,t) \equiv (s_1,t_1) \bmod W$ with $\chi(F(s_1,t_1)) =1 $ for all $\chi \in \widehat{G}$ and $F(s_1,t_1) \in (\eZ / W \eZ)^\times$ (see Proposition \ref{first lower-bound}), we can make the following change of variables. In the main term, for each $i \in \{ 1, \dots, n-1\}$, we let $k_i' = {F(s,t) \over  k_1\cdots k_{n-1}  }$. The point is that the equality $\chi(F(s,t)) = 1 $ for all $\chi \in \widehat{G}$ ensures that

\[ \label{sym1} \tag{3.14}\Psi_L(\bm k) = \chi_i^{-1}(k_i') \left( \prod_{\ell \neq i} \chi_i^{-1}(k_\ell) \chi_\ell(k_\ell) \right).\]The multiplication by $\chi_i^{-1}$ is a bijection from $ \widehat{G} \smallsetminus \{ \chi_i \}$ to $ \widehat{G} \smallsetminus \{1 \} $, so after a permutation of $k_1 , \dots, k_i', \dots, k_{n-1}$ we recover 
\[\label{sym2} \tag{3.15} \sum_{k_1\cdots k_i' \cdots k_{n-1} = k } \chi_i^{-1}(k_i') \left( \prod_{\ell \neq i} \chi_i^{-1}(k_\ell) \chi_\ell(k_\ell) \right) = \Psi_L(\bm k). \]This change of variables also replaces the quantity $\# \Lambda^*([-B,B]^2,[k_1 \dots k_{n-1} , d, m^2 ])$ with\[ \# \Lambda^*(\mathcal{R}(B, z k_1 \cdots k_i' \cdots k_{n-1}),[k_1 \cdots k_i' \cdots k_{n-1} , d, m^2 ]), \]for which estimate (\ref{estim réseaux}) still applies. Thus, if $b_F = \!\!\!\!\! \underset{(s,t) \in [-1,1]^2}{\max} |F(s,t)|$, letting $k = k_1 \cdots k_i' \cdots k_{n-1}$ yields
\begin{align*}
     S_{d,i}^{(2)}(B,m)  = \sum_{\substack{k \leqslant  b_F B^2/z \\ \gcd(k,  W) = 1}} \psi_L(k) \# \Lambda^*(\mathcal{R}(B,zk),  [k ,d,m^2]),
\end{align*}
where $\psi_L$ is defined in (\ref{psi}). We are now in a position to apply estimates (\ref{estim réseaux}) and (\ref{bound successive min}) to write that each $S_{d,i}^{(2)}(B,m)$ is equal to
 \begin{align*} & {c' \over 4} { \beta(d,m) \over  dm^2 }  \mathfrak{S}^{\mathrm{vol}}\left(b_F B^{2 - {2\over n}},{dm^2 \over \gcd(d,m)} ,1;z;v_0\right) +O_\varepsilon\left( B^{2 - {1 \over n}+ \varepsilon}  \right),
\end{align*}where $c' > 0$ is the same constant appearing for $S_d^{(1)}(B,d)$ and $\mathfrak{S}^\mathrm{vol}$ is as in (\ref{frac S vol}). We now apply Lemma \ref{estim final avec vol} with $z=B^{2 \over n}$. For all $i \in \{1, \dots, n-1\}$, we get
\begin{align*}\label{estim s2i} \tag{3.16}S_{d,i}^{(2)}(B,m)= \; &  c' c_{F,L}(v_0) u_{F,L}(v_0)(dm) {\alpha(d,m)B^2  \over  dm^2 }      + O_\varepsilon\left( d^{1+ \varepsilon}m^\varepsilon B^{ 2 - {1\over 4n} + \varepsilon} \right)  , 
\end{align*} 
where $c_{F,L}(v_0)$ and $u_{F,L}(v_0)$ are as in (\ref{c_{F,L}(v)}) and (\ref{c_{F,L}(v)}). For each error term $\mathcal{E}_I(B)$, we choose $i \in I$ and make the change of variable $k_i' = {F(s,t) \over k_1 \cdots k_{n-1}}$. Equalities (\ref{sym1}) and (\ref{sym2}) still hold and lead us to 
 \begin{align*} \mathcal{E}_I(B) = \sum_{\substack{ \bm k' \in \mathcal{A}_i (B,z)}}   \Psi_L(\bm k') \# \Lambda^*(  \mathcal{R}(B,z k_1 \cdots k_i'\cdots  k_{n-1} ), [k_1 \cdots k_i'\cdots  k_{n-1},d,m^2]) .\end{align*}
   where 
   \[ \mathcal{A}_i (B,z) = \left\{ \bm k' = (k_1, \dots, k_i', \dots, k_{n-1})  \in \eN^{n-1} : \begin{tabular}{c} $k_1\cdots k_i'\cdots k_{n-1} \leqslant b_FB^{2 - {2 \over n}}$  \\
   $\forall j \in I\smallsetminus \{i\}, \; k_j > z$ \\ $\gcd(k_1\cdots k_{n-1}, W)= 1$ \end{tabular} \right\} . \]Applying again estimate (\ref{estim réseaux}) for $k =[ k_1 \cdots k_i' \cdots k_{n-1},d,m^2]$, it follows that 
   \[ \mathcal{E}_I(B) \ll  \boldsymbol{\mathfrak{S}}^{\mathrm{err}} \left(B,dm;B^{2 \over n} ;v_0;I \smallsetminus \{i\} \right),\]where $\mathfrak{S}^\mathrm{err}$ is as in (\ref{H tilde}). We apply Lemma \ref{estim cal E} which provides
   \[ \label{estim ei} \tag{3.17}\mathcal{E}_I (B)  \ll_\varepsilon B^{2 - {1 \over n^2} + \varepsilon }.\]This concludes the proof of Proposition \ref{key prop}, up to combining equalities (\ref{cut s}), (\ref{estim s1}), (\ref{cut s2}), (\ref{estim s2i}) and (\ref{estim ei}).
\end{proof}

For any abelian extension $L/ \eQ$ and irreducible binary quadratic form $F \in \eZ[s,t]$, we let $g_{F,L}$ be the multiplicative function defined by 

\[\label{g1} \tag{3.18} g_{F,L}(d) := \rho_F^-(d)  \prod_{p \mid d}  \left(  1 + \psi_L(p) + \sum_{  \nu \geqslant 2 }  {\psi_L(p^\nu) \over p^\nu}    \right). \]

\begin{prop}\label{estim Md}  Let $L/\eQ$ be an abelian number field of degree $n \geqslant 2$, $F \in \eZ[s,t]$ be a binary quadratic form which is irreducible over $L$ and let $W$ and $w_0$ be as in $($\ref{choice W}$)$. Let $\varepsilon \in \left( 0, {1 \over 4n^2} \right)$. Enlarging $w_0$ if necessary, there exist $u \in \mathcal{U}$, and $c > 0$ such that for $B \geqslant 2$, $d \leqslant B^{{1 \over 8n^2}}$ satisfying $\mu^2(d)  = \gcd(d,W)=1$, we have
    \[ M_d(B) =c u(d) {g_{F,L}(d) \over d} B^2 + O_\varepsilon\left( d^{1 + \varepsilon} B^{2 - {1 \over 4n^2} + \varepsilon}  \right).\]
\end{prop}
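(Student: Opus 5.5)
The plan is to combine Proposition \ref{lemme M_dY} with Proposition \ref{key prop}, and then to evaluate the resulting sum over $m$ as an Euler product in $m$, with $d$ held fixed.

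\textbf{Step 1: truncate the square-free condition.} Choose $Y := B^{1/(4n^2)}$. Proposition \ref{lemme M_dY} gives
\[ M_d(B) = \sum_{\substack{m \leqslant Y \\ \gcd(m,W)=1}} \mu(m) S_d(B,m) + O_\varepsilon\left(B^{2-\frac{1}{4n^2}+\varepsilon}\right). \]
Only square-free $m$ contribute, and each such $m$ is coprime to $W$. Since $d \leqslant B^{1/(8n^2)}$, Proposition \ref{key prop} applies to every term. Summed over $m \leqslant Y$, its error terms contribute
\[ \ll d^{1+\varepsilon} Y^{1+\varepsilon} B^{2-\frac{1}{n^2}+\varepsilon} \ll d^{1+\varepsilon} B^{2-\frac{3}{4n^2}+2\varepsilon}, \]
which is admissible.

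\textbf{Step 2: complete the sum over $m$.} Apart from $c_0B^2$, the main term is
\[ \sum_{m \leqslant Y} \mu(m)\, u_{L,F}(v_0)(dm)\, \frac{\alpha(d,m)}{dm^2}. \]
We have $\alpha(d,m) \ll_\varepsilon (dm)^{\varepsilon}\gcd(d,m)\,\sigma_1(\cdot)$. The factor $\sigma_1(a)$ is a finite Euler product over $p \mid a$. Its local factor is $1+O(1/p)$ at primes dividing $a$ exactly once, and can reach about $p\cdot O(1/p)$ at primes with $p^2 \mid a$ (the $\gcd(\ell,a)$ factor inflates it). So $\sigma_1(dm^2/\gcd(d,m)) \ll_\varepsilon (dm)^\varepsilon$. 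Since $\gcd(d,m)\leqslant d$, the tail $m>Y$ of the completed series is bounded by $d^{1+\varepsilon}\sum_{m>Y} m^{-2+\varepsilon} \ll d^{1+\varepsilon}Y^{-1+\varepsilon}$. This is within the error, because $B^2Y^{-1} = B^{2-1/(4n^2)}$. We therefore replace the truncated sum by the full series.

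\textbf{Step 3: factor the series.} The key structural point is that the summand is multiplicative in $m$ for fixed square-free $d$. Indeed, $u_{L,F}(v_0)$, $v_0$ and $\rho_F^-$ are multiplicative, and $\sigma_1(a)$ factors over the primes of $a$ as
\[ \prod_{p\mid a}\Bigl(\sum_{\nu\geqslant 0}\frac{\psi_L(p^\nu)\rho^-_{F,a}(p^\nu;v_0)}{p^\nu}\gcd(p^\nu,a)\Bigr). \]
(This uses $\rho_{F,a}^-(p^\nu;v_0)=\rho_F^-(p^\nu)\,v_0(p)^{\cdot}$ restricted to $p\nmid a$, so that for $p\mid a$ only $\nu=0$ survives, which is to be checked carefully from (\ref{def func}).) Writing the series as $\prod_{p\nmid W}E_p(d)$ and splitting into $p\nmid d$ and $p\mid d$, we obtain
\[ \sum_{m} \cdots = \frac{1}{d}\prod_{p\nmid dW}\Bigl(1-\frac{\text{local}(p)}{p^2}\Bigr)\prod_{p\mid d}(\cdots). \]
The first product is $c''\,u_1(d)$ with $u_1\in\mathcal{U}$, and $c''>0$ once $w_0$ is large. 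For each $p\mid d$, the local factor from $m$ coprime to $p$ should be
\[ \rho_F^-(p)v_0(p)\,u_{L,F}(v_0)(p)\Bigl(1+\psi_L(p)+\sum_{\nu\geqslant2}\frac{\psi_L(p^\nu)}{p^\nu}\Bigr)\cdot\bigl(\text{unit}\bigr). \]
This matches the definition (\ref{g1}) of $g_{F,L}$: the terms $1+\psi_L(p)$ arise from $\gcd(\ell,d)=p$ cancelling the $1/p$ at $\nu=1$, and the higher terms come from $\gcd(p^\nu,a)=p$. The local factor from $m$ divisible by $p$ is $O(1/p)$ relative to the first. Absorbing all the $1+O(1/p)$ corrections into a single $u\in\mathcal{U}$ gives the shape $c\,u(d)g_{F,L}(d)/d$ with $c=c_0c''$.

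The main obstacle is Step 3. It requires the bookkeeping of $\sigma_1(dm^2/\gcd(d,m))$ at primes dividing $d$, dividing $m$, or dividing both, and a check that the resulting local factors have the form $(1+O(1/p))\times$ (the factor in $g_{F,L}$). One must also make sure nothing degenerates when $1+\psi_L(p)=0$; in that case both sides simply vanish up to the $u$ factor. I would first compute $E_p$ explicitly in the three cases using $\rho_F^-(p^\nu)=\rho_F^-(p)$ for $p\nmid W$, and only then define $u$ as the quotient.
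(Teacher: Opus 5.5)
Your Steps 1 and 2 follow the paper's proof: Proposition \ref{lemme M_dY} plus Proposition \ref{key prop}, then completing the $m$-sum with a tail $\ll d^{1+\varepsilon}Y^{\varepsilon-1}$. Your choice $Y=B^{1/(4n^2)}$ works as well as the paper's $B^{1/(2n^2(1+\varepsilon))}$.

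The gap is in Step 3, and it cannot be closed as planned. First, for $\ell\mid a^\infty$ one has $\rho_{F,a}^-(\ell;v_0)=1$ (the products are empty), not $0$, so every $\nu$ contributes. The local factor of $\sigma_1(a)$ is therefore $1+\psi_L(p)+\sum_{\nu\geqslant2}\psi_L(p^\nu)p^{1-\nu}$ when $v_p(a)=1$, and $1+\psi_L(p)+\psi_L(p^2)+\sum_{\nu\geqslant3}\psi_L(p^\nu)p^{2-\nu}$ when $v_p(a)=2$. Write $\kappa_p:=u_{F,L}(v_0)(p)\rho_F^-(p)v_0(p)$. The local factor at $p\mid d$ of $\sum_m\mu(m)u_{F,L}(v_0)(dm)\alpha(d,m)/(dm^2)$ is then
\[ \frac{\kappa_p}{p}\Bigl(1+\psi_L(p)+\sum_{\nu\geqslant2}\frac{\psi_L(p^\nu)}{p^{\nu-1}}\Bigr)-\frac{\kappa_p}{p^2}\Bigl(1+\psi_L(p)+\psi_L(p^2)+\sum_{\nu\geqslant3}\frac{\psi_L(p^\nu)}{p^{\nu-2}}\Bigr)=\frac{\kappa_p}{p}\Bigl(1-\frac1p\Bigr)\bigl(1+\psi_L(p)\bigr). \]
The tails with $\nu\geqslant2$ cancel exactly. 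This simply reflects that $\mu^2(F)\mathds{1}_{p\mid F}$ forces $p\,\|\,F$, where $r_L$ takes the value $r_L(p)=1+\psi_L(p)$.

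Now take $p\nmid W$ split in $K$ but not totally split in $L$; such primes have positive density because $K\cap L=\eQ$. Then $1+\psi_L(p)=r_L(p)=0$. In fact $M_p(B)=0$ identically, since $F(s,t)$ square-free with $p\mid F(s,t)$ gives $r_L(F(s,t))=r_L(p)\,r_L(F(s,t)/p)=0$. By contrast,
\[ g_{F,L}(p)=\rho_F^-(p)\sum_{\nu\geqslant2}\psi_L(p^\nu)p^{-\nu}=\rho_F^-(p)(1-p^{-1})\bigl((1-p^{-f})^{-n/f}-1\bigr)>0, \]
where $f$ is the residue degree of $p$ in $L$. This breaks Step 3 in three places:
\begin{itemize}
\item your claim that ``both sides vanish'' is false;
\item your claim that the $p\mid m$ part is $O(1/p)$ relative to the $p\nmid m$ part is false, since the two have the same size and cancel;
\item no positive $u\in\mathcal{U}$ can bridge the two, so for $d=p$ the displayed asymptotic fails as $B\to\infty$.
\end{itemize}

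The paper's own proof has the same structure as yours. It reaches $g_{F,L}$ only through an evaluation of $\sigma_1$ that mishandles the factor $\gcd(\ell,a)$, so following it does not help.

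What your Steps 1--3 actually prove is the statement with $g_{F,L}(d)$ replaced by $\rho_F^-(d)r_L(d)=\rho_F^-(d)\prod_{p\mid d}(1+\psi_L(p))$. In that version take $u(p)=u_{F,L}(v_0)(p)v_0(p)(1-p^{-1})E_p^{-1}$, where $E_p=1+O(p^{-2})$ is the Euler factor at $p\nmid dW$. This corrected version is all that Section~\ref{section5} uses, since the sieve there only needs $g(p)=(1-\tfrac1n)\rho_F^-(p)(1+\psi_L(p))p^{-1}(1+O(1/p))$.
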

\begin{proof}
    We have $\mu^2(d) = \mu^2(m) = 1$ so (\ref{vartheta}) becomes 
    \[ \sigma_1\left( {dm^2 \over \gcd(d,m^2) } \right) = \prod_{\substack{p \mid d \\ p \nmid m}} \left(1+ \sum_{  \nu \geqslant 1 }  {\psi_L(p^\nu) \over p^\nu}  \right) \prod_{ \substack{p \mid m }} \left(  1 + \psi_L(p) + \sum_{  \nu \geqslant 2 }  {\psi_L(p^\nu) \over p^\nu}    \right). \]We let 
    $u_1 \in \mathcal{U}$ be the function defined by 
    \[ u_1(k) := \prod_{p \mid k}\left(1+ \sum_{  \nu \geqslant 1 }  {\psi_L(p^\nu) \over p^\nu}  \right). \]Then, we have 
     \[ \rho_F^-(dm)  \sigma_1\left( {dm^2 \over \gcd(d,m^2) } \right) = { g_{F,L}(d)u_1(d) g_{F,L}(m,d)  \over  u_1(\gcd(m,d))},  \]with $g_{F,L}$ defined by (\ref{g1}). We now let 
     \[ u_2(d) := u_{F,L}(v_0)(d) u_1(d) \]and 
     \[ M_d'(Y) :=  \sum_{\substack{m \leqslant Y  \\ \gcd(m,W) = 1}}   { \mu(m)   \over m^2  }  { v_0(m,d)u_{F,L}(v_0)(m,d) \gcd(m,d ) g_{F,L}(m,d)  \over   u_1(\gcd(m,d)) }.   \]From Lemma \ref{lemme M_dY} and Proposition \ref{key prop}, for $Y \in (1, B^{1/2}) $, we get
    \[ M_d(B) =  c_0 {u_2(d)  g_{F,L}(d) \over d} B^2  M_d'(Y) + O_\varepsilon \left({B^{2 + \varepsilon} \over Y} + d^{1+\varepsilon} B^{ 2 - {1\over n^2} +  \varepsilon } Y^{1 + \varepsilon} \right) .\]Now, we write 
    \begin{align*}  \sum_{\substack{m > Y  \\ \gcd(m,W) = 1}}   { \mu(m)   \over m^2  }  { v_0(m,d)u_{F,L}(v_0)(m,d) g_{F,L}(m,d) \gcd(m,d) \over   u_1(\gcd(m,d))}  & \ll_\varepsilon d^\varepsilon \sum_{\substack{m > Y }}   { m^\varepsilon\over m^2  }  \gcd(d,m ) \\ & \ll_\varepsilon d^{1 + \varepsilon} Y^{\varepsilon - 1},  \end{align*}so that taking $Y = B^{1 \over 2n^2(1 + \varepsilon)} $ ensures that we can replace the sum over $M_d'(Y)$ by the complete sum 
    \[ \sum_{\substack{m \in \eN \\ \gcd(m,W) = 1}}  { \mu(m)   \over m^2  }  { v_0(m,d)u_{F,L}(v_0)(m,d) g_{F,L}(m,d)\gcd(m,d ) \over   u_1(\gcd(m,d)) },\]and the error term becomes 
    \[ \ll_\varepsilon B^{2 - {1 \over 2n^2(1 + \varepsilon)} + \varepsilon } + d^{1 + \varepsilon} B^{2 - {1-\varepsilon  \over 2n^2( 1+  \varepsilon)}  } + d^\varepsilon B^{ 2 - {1 \over 2n^2} +  \varepsilon  } \ll_\varepsilon d^{1+\varepsilon} B^{2- {1 \over 4n^2} + \varepsilon}. \] By multiplicativity, this sum over $m$ can be written as $c_{F,L}'  u_3 (d)$,
    where 
    \[ c_{F,L}' := \prod_{p \nmid W} \left( 1 - {1 \over p^2} v_0(p) u_{F,L}(v_0)(p) g_{F,L}(p)  \right), \]and
    \[ u_3 (d) :=  \prod_{p \mid d} \left( 1 - {u_1^{-1}(p)  \over p}  \right) \left( 1 - {1 \over p^2} v_0(p) u_{F,L}(v_0)(p) g_{F,L} (p) \right)^{-1}. \]
  Without loss of generality, we can choose $w_0$ in (\ref{choice W}) large enough to ensure that $c_{F,L}' > 0$ and $u_3 \in \mathcal{U}$, thus concluding the proof with $u := u_2u_3 = u_1 u_3 u_{F,L}(v_0)$.
    
\end{proof}

\section{Lower bound III : conclusion using the fundamental lemma of sieve theory}\label{section5}

Combining Propositions \ref{first lower-bound}, \ref{mise en place sieve} and \ref{estim Md}, we are in a position to conclude the proof of Theorem~\ref{th}. The last step relies on the following version of the fundamental lemma of sieve theory. For more details on this lemma, see \cite[th. 6.3]{IK}.

\begin{lemme}\label{sieve}
    Let $y > 1$ and $\kappa > 0 $. There exists two sequences of real numbers $(\lambda_d^\pm)_{d \geqslant 1}$ depending only on $y$, $\kappa$ such that 
    \begin{enumerate}
        \item[(i)] $\lambda_1^\pm = 1$,
        \item[(ii)] $|\lambda_d^\pm | \leqslant 1$ for all $d \geqslant 1$,
        \item[(iii)] $\lambda_d^\pm = 0$ if $d > y$,
        \item[(iv)] $\forall n \in \eN,\; \displaystyle \sum_{d \mid n} \lambda_d^- \leqslant 0 \leqslant \sum_{d \mid n} \lambda_d^+.$
    \end{enumerate}
    Moreover, if $g : \eN \to [0,1)$ is a multiplicative function satisfying 
    \[ \label{hyp sieve} \tag{4.1} \prod_{w \leqslant p < z} (1- g(p) )^{-1} \leqslant \left( { \log z \over \log w} \right)^\kappa \left( 1 + {M\over \log w} \right) \quad \quad \quad \quad (2 \leqslant w < z \leqslant y) \]where $M>0$ is independent of $w$, then for all real numbers $w,z,y$ satisfying $2 \leqslant w < z \leqslant y$, we have the estimates 
    \[ \tag{4.2}  \sum_{d \mid P(z) } \lambda_d^\pm g(d)  = \left(1 + O \left( \mathrm{e}^{- {\log y \over \log z}} \left( 1 + {M \over \log z}  \right)^{10}  \right) \right) \prod_{p < z} (1- g(p)) \]where the implicit constant only depends on $\kappa$.
\end{lemme}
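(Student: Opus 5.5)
This statement is the fundamental lemma of sieve theory in the form of \cite[th. 6.3]{IK}, and I would prove it along the lines of the beta-sieve (Rosser--Iwaniec) construction, for which properties (i)--(iv) are combinatorial and the estimate (4.2) follows from an inductive argument on the sieve dimension~$\kappa$. Concretely, fix $\beta = \beta(\kappa) > 0$ large, say $\beta \geqslant 9\kappa + 1$. Define $\lambda_d^\pm$ for square-free $d = p_1 p_2 \cdots p_r$ with $p_1 > p_2 > \cdots > p_r$ by $\lambda_d^\pm = \mu(d)$ if $d$ lies in the set $\mathcal{D}^\pm$ and $\lambda_d^\pm = 0$ otherwise. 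Here $\mathcal{D}^+$ (resp. $\mathcal{D}^-$) consists of those $d$ for which the truncation condition $p_1 \cdots p_{m-1} p_m^{\beta + 1} < y$ holds for every odd $m$ (resp. every even $m$). Then (i) and (ii) are immediate. For (iii), the last admissible prime $p_m$ satisfies $p_1\cdots p_m \leqslant p_1 \cdots p_{m-1}p_m^{\beta+1} < y$, so $d < y$ up to one extra prime factor; shrinking $y$ to $y^{1/(\beta+2)}$ in the definition if necessary gives support in $[1,y]$ at no cost in (4.2). Property (iv) follows from the Buchstab identity: writing $\sum_{d \mid n} \lambda_d^\pm = \mathds{1}_{n=1} + \sum(\text{terms where the truncation first fails})$, the leftover terms are $\sum_{d \mid n} \mu(d)$ restricted to $d$ with smallest prime below the failing level. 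These carry the sign $(-1)^m$, so they are $\geqslant 0$ for the upper sieve and $\leqslant 0$ for the lower sieve.

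For (4.2), I would use the same Buchstab decomposition at the level of the weighted sums. One has
\[ \sum_{d \mid P(z)} \lambda_d^\pm g(d) = \prod_{p<z}(1-g(p)) \pm \sum_{m} \; \sum_{\substack{p_1 > \cdots > p_m \\ \text{first failure at } m}} g(p_1 \cdots p_m) \prod_{p < p_m}(1 - g(p)), \]
where the error $E^\pm$ is a sum over $m$ of nonnegative terms. The key step is to bound these remainder terms by $\prod_{p<z}(1-g(p))$ times a small factor. For this I would use hypothesis (4.1) to compare $\prod_{p<p_m}(1-g(p))$ with $\prod_{p<z}(1-g(p))$, which gives a factor $(\log z/\log p_m)^\kappa(1+M/\log p_m)$. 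I would then apply Rankin's trick: insert $(y/(p_1\cdots p_m p_m^{\beta}))^{\delta}$-type weights, or more simply exploit that failure at $m$ forces $p_1 \cdots p_{m} \geqslant y^{1-o(1)}$ while all $p_i < z$, so that $m \gtrsim s := \log y / \log z$. Summing over $p_1 > \dots > p_m$ with $\sum_{p} g(p) \ll \kappa \log(\log z/\log w) + M/\log w$ from (4.1), the $m$-th term is bounded by something like $(C\kappa)^m/m! \cdot (\text{stuff})$ with geometric decay in $m$ beyond $s$. This yields the total error $\ll_\kappa e^{-s}(1+M/\log z)^{10}$ once $\beta$ is chosen large enough in terms of~$\kappa$.

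The main obstacle is precisely this last quantitative step: obtaining the clean decay $e^{-\log y/\log z}$ uniformly, rather than merely $o(1)$. I would first try the cheap route of Brun's pure sieve / Rankin's trick. Since we are in the regime $z \leqslant y$ with $s$ large and only need an $e^{-s}$ saving (not the optimal sieve functions $F,f$), a crude bound of the form $\sum_{m \geqslant s}(e\kappa \log(\ldots)/m)^m$ combined with $p_m$-dependent Rankin weights should suffice. The exponent $10$ on $(1+M/\log z)$ comes from the number of times the comparison (4.1) is used in the iteration. If this crude route loses too much near the boundary $s \approx 1$, the statement there is trivial anyway, since the error term is then $O(1)$ and the main term is bounded by (4.1). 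So I would only need the argument for $s$ large in terms of~$\kappa$.
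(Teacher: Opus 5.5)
\textbf{Verdict: the proposal has genuine gaps.} The paper gives no proof of this lemma; it quotes \cite[th. 6.3]{IK}. Your plan is the standard route to that result: beta-sieve weights, the Buchstab decomposition of $\sum_{d\mid P(z)}\lambda_d^\pm g(d)-V(z)$ with $V(x):=\prod_{p<x}(1-g(p))$, Rankin's trick, and $\beta$ a large multiple of $\kappa$.

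\textbf{The main estimate is not proved.} Your sketch stops exactly where the work lies, and the shortcuts you propose do not work. A first failure at level $m$ means $p_1\cdots p_{m-1}p_m^{\beta+1}\geqslant y$, not $p_1\cdots p_m\geqslant y^{1-o(1)}$. Knowing $m\gtrsim s$ is not enough. Condition (4.1) allows $\sum_{p<z}g(p)$ to be as large as $\kappa\log_2 z+O(1+M)$, so the sum over $p_1>\cdots>p_m$ can be of size $(\kappa\log_2 z)^m/m!$. Moreover, the factor $(\log z/\log p_m)^\kappa$ obtained by comparing $V(p_m)$ with $V(z)$ is unbounded as long as $p_m$ may be small. (Also, the Rankin weight must be $(p_1\cdots p_{m-1}p_m^{\beta+1}/y)^{\delta}$, which is $\geqslant1$ on the failure set, not its reciprocal.)

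The missing idea is a lower bound for the smallest prime of a failing chain. Put $y_j:=y/(p_1\cdots p_j)$.
\begin{itemize}
\item Failure at $m$ gives $p_m^\beta\geqslant y_m$.
\item The conditions that do hold at $j=m-2,m-4,\dots$ give $p_j^\beta<y_j$, hence $y_{j+2}=y_j/(p_{j+1}p_{j+2})>y_j^{1-2/\beta}$.
\item Iterating yields $\log p_m\gg\beta^{-1}(1-2/\beta)^{m/2}\log(y/z)$.
\end{itemize}
This one bound controls both $V(p_m)/V(z)$ via (4.1) and $\sum_{p_m\leqslant p<z}g(p)$, since $[p_m,z)$ contains all the $p_i$. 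Rankin's trick with $\delta=1/\log z$ then gives exactly $y^{-1/\log z}=\mathrm{e}^{-s}$ at the price $\mathrm{e}^{m+\beta}$. The series over $m$ becomes roughly geometric with ratio about $\mathrm{e}^{2}\kappa/\beta$, which is where a choice like $\beta=9\kappa+1$ comes from. None of this is in your proposal, so (4.2) is not established.

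\textbf{The bounded-$s$ fallback is wrong.} For $s\asymp1$, (4.2) still asserts $\bigl|\sum_{d\mid P(z)}\lambda_d^\pm g(d)-V(z)\bigr|\ll_\kappa V(z)$. Condition (4.1) says nothing about this; the trivial bound is only $\prod_{p<z}(1+g(p))$. In fact, for $\kappa>1$ the claim is false at $z=y$ for every lower-bound sieve:
\begin{itemize}
\item Take $g(p)=\kappa/p$ for $p>\kappa$ and $g(p)=0$ otherwise; this satisfies (4.1) by Mertens.
\item Read $\sum_{d\mid P(y)}\lambda_d^-g(d)$ as $\mathbb{E}\bigl[\sum_{d\mid n}\lambda_d^-\bigr]$ for the random $n=\prod_{p<y}p^{X_p}$, with independent $X_p\in\{0,1\}$ and $\Pr(X_p=1)=g(p)$.
\item Property (iv) with $n=p$ forces $\lambda_p^-\leqslant-1$.
\item For $n=mp$ with $p\in(y/2,y)$ and $m>1$, (iii) kills every $\lambda^-_{d'p}$ with $d'>1$. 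So $\sum_{d\mid n}\lambda_d^-=\sum_{d\mid m}\lambda_d^-+\lambda_p^-\leqslant-1$.
\item Hence $\sum_{d\mid P(y)}\lambda_d^-g(d)\leqslant V(y)-(\log 2+o(1))\kappa/\log y$, which is not $O(V(y))=O((\log y)^{-\kappa})$.
\end{itemize}
So both the argument and the statement need a restriction such as $s\geqslant s_0(\kappa)$. Consistently, the lower bound for $p_m$ above degenerates as $s\to1$.

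\textbf{Your argument for (iii) fails for $\lambda^-$.} $\mathcal{D}^-$ imposes no condition when $r=1$, so $\lambda_p^-=-1$ for every prime $p$. Replacing $y$ by $y^{1/(\beta+2)}$ does not fix this, and it is not free: it weakens $\mathrm{e}^{-s}$ to $\mathrm{e}^{-s/(\beta+2)}$. For $r\geqslant2$ no shrinking is needed, since $\beta\geqslant1$ already gives $d<y$. For $r=1$ you must impose $p<y$. Property (iv) then holds for $n>1$ having a prime factor below $y$. That is essentially all one can ask: under (i) and (iii), (iv) fails for $n=1$ and for $n$ a prime larger than $y$. (Also, a first failure at level $m$ contributes with sign $-\mu(p_1\cdots p_m)=(-1)^{m+1}$, not $(-1)^m$.)
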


Since $(\lambda_d^-)_{d \in \eN}$ has its support in $[1, y]$, choosing $y = B^{\varepsilon_0}$ as in Proposition \ref{mise en place sieve} enables us to write that the error term obtained when replacing $M_d(B)$ by its expression from Proposition~\ref{estim Md} is 
\[ \ll_\varepsilon B^{2 - {1 \over 4 n^2} + \varepsilon } B^{\varepsilon_0( 1 + \varepsilon)}. \]Hence, choosing $\varepsilon_0 = {1 \over 8n^2}$ as in (\ref{choice eps0 eta}) provides an error term
\[ \ll_\varepsilon B^{2 - {1 \over 8 n^2} + \varepsilon }, \]which is admissible. We now ensure that the multiplicative function $g$ defined by 
\[ g(d) := \mu^2(d) \left( 1-{1 \over n} \right)^{\omega(d)} u(d) { g_{F,L}  (d)  \over d}  \mathds{1}_{\gcd(d,W) = 1},\]where $g_{F,L}$ is as in (\ref{g1}) and $u \in \mathcal{U}$ satisfies the assumptions of Lemma \ref{sieve}. For any prime $p \nmid W$, we have

\[ g(p) = \left( 1 - {1 \over n}\right) \left({\rho_F^-(p) (1 + \psi_L(p)) \over p} + O \left( {1 \over p^2}  \right) \right)    \left( 1+ O \left( {1 \over p} \right) \right).\]Thus, we have
\[ \sum_{p <  z} g(p) =  \left(1 - {1 \over n} \right) \left( \sum_{ \substack{ p <  z\\ p \nmid W} } {\rho_F^-(p) \over p}+ \sum_{ \substack{ p < z \\ p \nmid W} } {\psi_L(p)\rho_F^-(p) \over p} \right)   + a + O\left({1 \over z} \right)\]with 
\[ a : = \sum_{k \geqslant 2} \sum_{p \nmid W} {g(p)^k \over k} + \sum_{p \nmid W} \left( g(p) - \left( 1 - {1 \over n} \right) {\rho_F^-(p)(1 + \psi_L(p)) \over p} \right).\]We now use the prime number theorem for the Dedekind zeta function of $K$ to write
\[ \sum_{ \substack{ p < z\\ p \nmid W} } {\rho_F^-(p) \over p} = \log_2 z + b_1 + O\left( {1 \over \log z} \right) \]and the prime number theorem for $L$-functions of non-trivial characters (see Lemma \ref{TNP}) provides
\[ \sum_{ \substack{ p < z\\ p \nmid W} } {\psi_L(p)\rho_F^-(p) \over p} = b_2 + O\left( {1 \over \log z} \right),\]where $b_1$ and $b_2$ are some constants depending only on $L,K$ and $W$. Hence, we are provided with the estimate
   \[   \log \prod_{p < z } (1 - g(p))^{-1}  = \left( 1- {1 \over n} \right) \log_2 z + a' + O \left({1 \over \log z} \right) \]with $a' = a + b_1 + b_2$. It follows that
   \[ \prod_{p < z } (1 - g(p))^{-1}  = \mathrm{e}^{a'} (\log z)^{1 - {1 \over n}} \left( 1 +  O \left({1 \over \log z} \right)\right).\]We recall that the integer $w_0$ is defined with $W$ in (\ref{choice W}). If $w > w_0$, upper bound (\ref{hyp sieve}) follows immediately. In the cases $w \leqslant w_0 < z$ and~$w~\leqslant~z~<~w_0$, the product $ \displaystyle \prod_{ w \leqslant p < z} (1 - g(p))^{-1}$ equals respectively $\displaystyle \prod_{w_0 + 1 \leqslant p < z} (1 - g(p))^{-1}$ and $1$ so that upper bound (\ref{hyp sieve}) is still satisfied. Therefore, we can apply Lemma \ref{sieve} to each function $g$. Taking $z = B^\eta$ with $\eta = {1 \over 16 n^2} < \varepsilon_0 $ as in (\ref{choice eps0 eta}), it follows from Lemma \ref{sieve} that 
\[ N_{F,L}(B) \gg {B^2 \over (\log B)^{ 1 - {1 \over n}}}. \]This concludes the proof of the lower bound in Theorem \ref{th}, in the case where $F$ is irreducible over~$L$.

\section{The case $F$ reducible over $L$}

In this section, we assume $F$ to be reducible over $L$ and $[L : \eQ] \geqslant 3$. By Lemma \ref{réécriture (iii)}, there exists a unique non-trivial character of $G$, denoted $\chi_0$, such that $\widetilde{\chi_0}$ is trivial. Since $K\subset L$, we have $2 \mid \# G$ and we can consider $M_1/ \eQ$, the maximal cyclic subfield of $L$ containing $K$ and such that $[M_1 : \eQ] $ is a power of $2$, say $2^a$ with $a \geqslant 1$. Let $H$ be the group satisfying $G \simeq \eZ/ 2^a \eZ \times H$. Then, the field $M_2 := L^H$ is such that $M_1 \cap M_2 = \eQ$, and there exists an isomorphism (see \cite[prop. 3.21]{Milne})
\[ G \overset{\phi}{\simeq}\mathrm{Gal}(M_1/ \eQ) \times \mathrm{Gal}(M_2/\eQ)\]given by 
\[ \phi : \sigma \longmapsto (\sigma_{\mid M_1} , \sigma_{\mid M_2} ).\]By duality, any character $\chi \in \widehat{G}$ can be written uniquely as $(\chi_1 \circ \pi_1 )(\chi_2 \circ \pi_2)$ with $\chi_i \in \widehat{\mathrm{Gal}(M_i / \eQ)} $ and with $\pi_i$ the projection
\[ \pi_i :  G  \longrightarrow \mathrm{Gal}(M_i / \eQ).\]If $p$ does not ramify in $L$, we have \[\phi(\mathrm{Frob}_p^L) = (\mathrm{Frob}_p^{M_1}, \mathrm{Frob}_p^{M_2}) = (\pi_1(\mathrm{Frob}_p^L), \pi_2(\mathrm{Frob}_p^L) ).\] Hence, for $p$ not ramifying in $L$, we can write
\begin{align*}  r_L(p) & = \sum_{\chi \in \widehat{G}} \chi(p) =  \sum_{\chi \in \widehat{G}} \chi(\mathrm{Frob}_p^L) \\
& = \sum_{\substack{\chi_1\in \widehat{\mathrm{Gal}(M_1/\eQ)} \\ \chi_2 \in \widehat{\mathrm{Gal}(M_2/\eQ)} }} \chi_1(\pi_1(\mathrm{Frob}_p^L))\chi_2(\pi_2(\mathrm{Frob}_p^L))\\
& =  \sum_{\substack{\chi_1\in \widehat{\mathrm{Gal}(M_1/\eQ)} \\ \chi_2 \in \widehat{\mathrm{Gal}(M_2/\eQ)} }} \chi_1(\mathrm{Frob}_p^{M_1})\chi_2(\mathrm{Frob}_p^{M_2})=   r_{M_1}(p ) r_{M_2}(p). \end{align*}Let $\chi_\ast$ be a generator of $\widehat{\mathrm{Gal}(M_1/ \eQ)} \simeq \eZ / 2^a \eZ$. Then $\chi_\ast^{2^{a -1}}$ is of order $2$ in $\widehat{\mathrm{Gal}(M_1 / \eQ)}$ so it is the restriction of $\chi_0$ to the subgroup $\mathrm{Gal}(M_1/\eQ)$ of $G$. Let $(s,t) \in \eZ^2$ be such that $F(s,t)$ is square-free and is the norm of an element of $L$. The ring $\mathcal{O}_L$ is assumed to be a principal ideal domain, so $F(s,t)$ is the norm of an ideal $\mathfrak{a} \in I_L$. If $p$ divides $F(s,t)$, then $p$ is the norm of a prime ideal $\mathfrak{p} \in \mathcal{P}_L$ and consequently, we have $\chi_\ast^{2^a -1} (p) = \chi_0(p) = \widetilde{\chi_0}(\mathfrak{p}) = 1 $.  We deduce that
\[ r_{M_1}(p ) = 2 (1 + \chi_*(p) + \cdots + \chi_*^{2^{a-1} -1}(p)). \]Now, the Galois correspondence ensures that there exists $M_1'/ \eQ$ such that \[ \mathrm{Gal}(M_1' / \eQ) \simeq \mathrm{Gal}(M_1/ \eQ) / \mathrm{Gal}(K/\eQ) \simeq \eZ / 2^{a-1} \eZ.\] In particular, we have the equality
\[ 1 + \chi_*(p) + \cdots + \chi_*^{2^{a-1} -1}(p) = r_{M_1'}(p).\]Taking $L_0$ the compositum of $M_1'$ and $M_2$ yields
\[ r_L(p) = 2 r_{M_1'}(p) r_{M_2}(p) = 2 r_{L_0}(p),\]where we used again that $\mathrm{Gal}(L_0 / \eQ) \simeq \mathrm{Gal}(M_1'/ \eQ) \times \mathrm{Gal}(M_2/ \eQ)$ to write $r_{L_0} (p)  =  r_{M_1'} (p ) r_{M_2}(p)$. Hence, Lemma \ref{first lower-bound} provides
\[  N_{F,L}(B) \gg_\eta  \sum_{ \substack{(s,t) \in (\eZ\cap [-B,B])^2 \\ 
 \gcd(s,t) = 1 \\
 (s,t) \equiv (s_1,t_1) \bmod W} } \mu^2(F(s,t)) \left( {2 \over n}\right)^{ \omega(F(s,t) , z) } r_{L_0}(F(s,t)).\]where $W$ is as in (\ref{choice W}) and $(s_1,t_1)$ is as in Lemma \ref{first lower-bound}. The form $F$ is irreducible over $L_0$ since by construction (following Lemma \ref{réécriture (iii)}) we have $\widetilde{\chi} $ non-trivial for every $\chi \in \widehat{\mathrm{Gal} (L_0/ \eQ)} $ which is non-trivial. We now apply Propositions \ref{mise en place sieve}, \ref{lemme M_dY}, \ref{key prop} and \ref{estim Md} with $r_{L_0}$ instead of $r_L$ in (\ref{def M}) and with $n$ replaced by $n/2$. Thus, we get 
 \[ N_{F,L}(B) \gg_\eta \sum_{\substack{d \leqslant B^\varepsilon \\ p \mid d \Rightarrow p \leqslant z \\ \gcd(d,W) = 1}} \lambda_d^- \left(1- {2 \over n} \right)^{\omega(d)} \left( c {g_1(d) \over d} B^2 + O_\varepsilon\left( d^{1 + \varepsilon} B^{2 - {1 \over 4n^2} + \varepsilon}  \right)  \right),\]where $g_1$ is as in (\ref{g1}) with $L_0$ instead of $L$. Finally, as in section \ref{section4}, Lemma \ref{sieve} provides
\[  N_{F,L}(B) \gg {B^2 \over (\log B)^{1 - {2 \over n}} },\]which concludes the proof of Theorem \ref{th}.
 
\section{Proof of Proposition \ref{coro}}\label{proof coro}

In this section, we prove Proposition \ref{coro}. This upper bound relies on a classical sieve for binary forms, first introduced by Nair in \cite{Nair}, then generalised by Nair and Tenenbaum in \cite{NT} for polynomials in one variable. A version of this result for irreducible binary forms has been developed by La Bretèche and Browning in \cite{BB}, before being generalised by Henriot in \cite{Henriot} and then by La Bretèche and Tenenbaum in \cite{BT}.\\

By \cite[lemma 8.2]{LM}, if the equation $N_{F,L}(x) = k$ has a solution everywhere locally, then for all $p$ dividing $k$ we have $[L_p : \eQ_p] \mid v_p(k)$. Thus, we deduce 
\[ N^\text{loc}_{F,L} (B) \leqslant \sum_{\substack{(s,t) \in \eN^2\\ |s|,|t| \leqslant B}} \varpi (F(s,t)),\]where $\varpi $ is the multiplicative function given by \[ \varpi(k) = \prod_{p \mid k} \mathds{1}_{[L_p : \eQ_p] \mid v_p(k)}. \]In particular, $\varpi(p) = \mathds{1}_{[L_p : \eQ_p] = 1}$ is the indicator function that $p$ splits completely in $\mathcal{O}_L$. We apply \cite[th. 1.1]{BT}, which yields

\[  N^\text{loc}_{F,L} (B)  \ll  B^2 \prod_{p \leqslant B} \left( 1 + {\rho_F(p) (\varpi(p) - 1) \over p^2}  \right).\]Hence, using again that $\rho_F(p) = 1 + (p-1)\rho_F^-(p)$, we have 
 \begin{align*} 
 { \rho_F(p)( \varpi(p)-1 ) \over p^2} & = \left( \varpi(p) - 1\right){\rho_F^-(p) \over p}  + O\left( {1 \over p^2} \right) . 
 \end{align*}Since $K/\eQ$ is Galois, any prime $p$ that does not ramify in $K$ satisfies
 \[ \rho_F^- (p) = \begin{cases}  \# \{ \mathfrak{p} \in \mathcal{P}_K : \mathfrak{p} \mid p \} = [K : \eQ] \text{ if } [K_p : \eQ_p] = 1 \\ 0 \text{otherwise.}\end{cases} \] Therefore, we have
 \[ \sum_{p \leqslant B} {\varpi(p) \rho_F^- (p) \over p} = [K: \eQ] \sum_{\substack{p \leqslant B \\ [L_p : \eQ_p] = 1 \\ [ K_p : \eQ_p] = 1 }} {1 \over p} + O(1).\]Since a prime that is unramified in both $L$ and $K$ splits completely in $K$ and $L$ if and only if it splits completely in their compositum $KL$ (see \cite[th. 3.1]{Marcus}), it follows that 
 \[ \sum_{p \leqslant B} {\varpi(p) \rho_F^- (p) \over p} = [K: \eQ]\sum_{\substack{p \leqslant B \\ [(LK)_p : \eQ_p] = 1 }} {1 \over p} + O(1). \]The Chebotarev density theorem \cite[th. 3.4]{Serre3} then yields
 \[ \sum_{p \leqslant B} {\varpi(p) \rho_F^- (p) \over p} = {[K : \eQ] \over [LK : \eQ]} \log_2(B) + O(1) . \]Finally, equality (\ref{calc [L: L cap K]}) ensures that 
 \[  {[K : \eQ] \over [LK : \eQ]}  = {r \over n}, \]where $r $ is the number of irreducible factors of $F$ in $L[s,t]$. It follows that
 
 \[ \sum_{p \leqslant B} {\rho_F(p)( \varpi(p)-1 ) \over p^2} =  - \left( 1 - {r \over n} \right) \log_2B + O(1),\]and
 \[ \prod_{p \leqslant B} \left(1+ { \rho_F(p)( \varpi(p)-1 ) \over p^2} \right) \ll (\log B)^{-\left(1 - {r\over n} \right)}. \]This concludes the proof of Proposition \ref{coro}.
 \qed
 
 \vspace{0.5cm}
\textbf{Acknowledgements :} I am thankful to Jean-Louis Colliot-Thélène for pointing out a mistake in an earlier version of this paper. I am also thankful to David Harari for useful conversations on norm equations, and to \'Etienne Fouvry for his valuable questions that helped to improve the content of this paper. I am grateful to Régis de la Bretèche for suggesting this problem and for his writing advices. I also thank Kevin Destagnol for his guidance throughout the writing of this paper.


\begin{thebibliography}{99}

\bibitem{BN}
T. Browning and R. Newton, The proportion of failures of the Hasse Norm principle. {\em Mathematika}, {\bf 62(2)} (2016), 337-347.


\bibitem{BB}
R. de la Bretèche and T. D. Browning. Sums of arithmetic functions over values of binary forms. {\em Acta Arith.}, {\bf 125(3)} : (2006) 291–304.

\bibitem{BT}
R. de la Bretèche and G. Tenenbaum, Moyennes de fonctions arithmétiques de formes binaires. {\em Mathematika}, {\bf 58(2)} : (2012) 290–304.

\bibitem{Cassels}
J.W.S. Cassels, An introduction to the Geometry of Numbers. Springer (1996).

\bibitem{CTHS}
J.-L. Colliot-Thélène, D. Harari and A.N. Skorobogatov, {\em Valeurs d’un polynôme à une variable représentées par une norme}, in “Number Theory and Algebraic Geometry”, ed. Miles Reid and Alexei Skorobogatov, London Mathematical Society Lecture Notes series {\bf 303} (2003), 69–89.

\bibitem{Daniel}
S. Daniel, On the divisor-sum problem for binary forms, { \em J. reine angew. Math.} {\bf 507} : (1999) 107–129.




\bibitem{Heilbronn}
H. Heilbronn, {\em Zeta-functions and L-functions}, in Algebraic Number Theory, 204–230.

\bibitem{Henriot}
K. Henriot, Nair-Tenenbaum bounds uniform with respect to the discriminant. {\em Mathematical Proceedings of the Cambridge Philosophical Society} ; {\bf 152} : (2012), no. 3, 405-424. 



\bibitem{IK}
H. Iwaniec and E. Kowalski, \textit{Analytic number theory}, American Mathematical Society Colloquium Publications, vol. 53, American Mathematical Society, Providence, RI, 2004.

\bibitem{Lang}
S. Lang, {\em Algebra}, Springer Science \& Business Media, 2012.

\bibitem{Lartaux}
A. Lartaux, Sur le nombre d'idéaux dont la norme est la valeur d'une forme binaire de degré 3, {\em  The Quart. J. Math.} {\bf 74.2} : (2023) 471-510.

\bibitem{LMFDB}
The LMFDB Collaboration, \textit{The number fields database, Home page of the abelian number fields with class number 1}, \url{https://www.lmfdb.org/NumberField/?field_is=ab&class_number=1}, (2026) [Online; accessed 21 February 2026].

\bibitem{LM}
D. Loughran and L. Matthiesen, 
Frobenian multiplicative functions and rational points in fibrations.  {\em J. Eur. Math. Soc.} {\bf 26} : (2024) 4779–4830.


\bibitem{LS}
D. Loughran and A. Smeets,
Fibrations with few rational points.
{\em GAFA} {\bf 26(5)} : (2016) 1449--1482.

\bibitem{Marcus}
D. Marcus, {\em Number Fields, Universitext}, Springer-Verlag (1977).

\bibitem{MV}
H. L. Montgomery and R. C. Vaughan, {\em Multiplicative number theory. I. Classical
theory}, Cambridge Studies in Advanced Mathematics, vol. {\bf 97}, Cambridge University
Press, Cambridge, 2007.

\bibitem{Milne}
J. S. Milne, Fields and Galois Theory, {\em Kea Books}, 2022.


\bibitem{Nair}
M. Nair. Multiplicative functions of polynomial values in short intervals. {\em Acta Arith.}, {\bf 62(3)} : (1992) 257–269.

\bibitem{NT}
M. Nair and G. Tenenbaum. Short sums of certain arithmetic functions. {\em Acta Math.}, {\bf 180(1)} : (1998) 119–144.

\bibitem{Neukirch}
J. Neukirch, Algebraic number theory. {\em Grundlehren der mathematischen Wissenschaften.} {\bf 322}, 1999.

\bibitem{Odoni}
R.W.K. Odoni, The Farey density of norm subgroups of global fields (I). {\em Mathematika}  {\bf 20(2)} : (1973) 155-169.


\bibitem{Serre3}
J.-P. Serre, 
\textit{Lectures on $N_X(p)$.} Aspects of Mathematics, {\bf 15}. 1997.


\bibitem{Serre1}
J.-P. Serre, 
Spécialisation des éléments de $\brau_2(\eQ(T_1 , \ldots, T_n))$.
{\em C. R. Acad. Sci. Paris Sér. I Math} {\bf 311} : (1990) 397-402.


\bibitem{Sofos}
E. Sofos, Serre's problem on the density of isotropic fibres in conic bundles. {\em Proc. London Math. Soc.} {\bf 113} : (2016).


\bibitem{Tate}
J. T. Tate, {\em Global Class Field Theory}, in Algebraic Number Theory, 162-203.

\bibitem{Tenenbaum}
G. Tenenbaum, {\em Introduction to analytic and probabilistic number theory}. Cambridge
University press, 1995.

\bibitem{Wei}
D. Wei, On the equation $N_{K/k}(\Xi) =f(t)$. {\em Proc. London Math. Soc.} {\bf 109} : (2014).
\end{thebibliography}
\end{document}